\documentclass[12pt, oneside, reqno]{amsart}
\pdfoutput=1

\title[MDPG \& MFOSLS loss functions for neural networks]{
  Global error estimators for parametric monotone nonlinearities
and neural approximations}

\author{Pablo Cort{\'e}s Castillo}
\address{Portland State University, PO Box 751, Portland OR 97201}
\email{pcortes@pdx.edu}

\author{Wolfgang Dahmen}
\address{University of South Carolina, 1523 Greene Street, 29208 Columbia, South
  Carolina}
\email{dahmen@math.sc.edu}

\author{Jay Gopalakrishnan}
\address{Portland State University, PO Box 751, Portland OR 97201}
\email{gjay@pdx.edu}

\keywords{monotone nonlinearity, parametric PDEs, neural networks, a posteriori error estimation, DPG, FOSLS}

\subjclass[2020]{65N15, 65N30, 65J15, 65N12, 68T07, 35J62}

\usepackage{calc}
\usepackage[margin=2.5cm]{geometry}
\usepackage[bbgreekl]{mathbbol}
\usepackage[cal=boondox]{mathalpha}
\usepackage{hyperref}
\usepackage{enumitem}
\usepackage{amssymb}
\usepackage{pgfplots}
\usepgfplotslibrary{groupplots}
\pgfplotsset{compat=1.17}

\newtheorem{theorem}{Theorem}[section]
\newtheorem{lemma}[theorem]{Lemma}

\newtheorem{proposition}[theorem]{Proposition}

\theoremstyle{definition}
\newtheorem{assumption}[theorem]{Assumption}

\newtheorem{definition}[theorem]{Definition}
\newtheorem{example}[theorem]{Example}

\theoremstyle{remark}
\newtheorem{remark}[theorem]{Remark}

\newcommand{\bb}[1]{\mathbb{{#1}}}
\newcommand{\cl}[1]{\mathcal{{#1}}}
\let\div\relax
\DeclareMathOperator{\tr}{{\ensuremath{\text{tr}}}}
\newcommand{\trn}{\mathop{\mathrm{tr}_n}}
\DeclareMathOperator{\osc}{{\ensuremath{\text{osc}}}}

\DeclareMathOperator{\div}{{\ensuremath{\text{div}}}}

\DeclareMathOperator*{\argmin}{arg\,min}

\DeclareMathOperator*{\dive}{div}
\DeclareMathOperator*{\ran}{range}

\DeclareFontFamily{U}{mathx}{\hyphenchar\font45}
\DeclareFontShape{U}{mathx}{m}{n}{
<-6> mathx5 <6-7> mathx6 <7-8> matha7
<8-9> mathx8 <9-10> mathx9
<10-12> mathx10 <12-> mathx12
}{}
\DeclareSymbolFont{mathx}{U}{mathx}{m}{n}
\DeclareMathSymbol{\bigplus}{\mathop}{mathx}{"90}
\DeclareMathSymbol{\bigtimes}{\mathop}{mathx}{"91}

\newcommand{\veps}{\varepsilon}

\newcommand{\ip}[1]{\langle{#1}\rangle}
\newcommand{\fp}{\varPi^a}
\newcommand{\fpb}{\varPi^b}
\newcommand{\PB}{P_\alpha} %
\newcommand{\Ba}{\cl{B}_\alpha}
\newcommand{\Aa}{\cl{A}_\alpha}
\newcommand{\Ca}{\cl{C}_\alpha}
\newcommand{\ba}{b_\alpha}

\newcommand{\Rza}{\cl{G}_\alpha}

\newcommand{\La}[1]{\hat{\cl{L}}(\alpha, {#1})}
\newcommand{\Ea}[2]{\cl{e}_h^{#1}(\alpha, {#2})}

\newcommand{\qt}{\tilde{q}}

\newcommand{\pt}{\tilde{p}}

\newcommand{\clu}{{\cl{u}}}
\newcommand{\clv}{{\cl{v}}}
\newcommand{\clvh}{\clv_h}

\newcommand{\clw}{{\cl{w}}}
\newcommand{\clwh}{{\clw_h}}
\newcommand{\clz}{{\cl{z}}}
\newcommand{\clzh}{{\clz_h}}

\newcommand{\Fh}{{\cl{F}_h}}
\newcommand{\Fht}{{\cl{F}_{h,\theta}}}

\newcommand{\Ua}{{\cl{U}_\alpha}}
\newcommand{\Uap}{{\cl{U}'_\alpha}}
\newcommand{\Uah}{{\cl{U}_{\alpha, h}}}

\newcommand{\Va}{{\cl{V}_\alpha}}
\newcommand{\Vap}{{\cl{V}'_\alpha}}
\newcommand{\Vah}{\cl{V}_{\alpha, h}}
\newcommand{\VK}{\cl{V}(K)}
\newcommand{\VhK}{\cl{V}_h(K)}
\newcommand{\cQ}{\cl{Q}}

\newcommand{\om}{\varOmega}
\newcommand{\oh}{{\varOmega_h}}
\newcommand{\doh}{{\partial\varOmega_h}}

\newcommand{\RT}{RT}

\newcommand{\Ho}{\mathring{H}}

\newcommand{\Hdivoh}{H(\dive, \oh)}
\newcommand{\Hoh}{\Ho^{1/2}(\partial\oh)}
\newcommand{\Hmh}{H^{-1/2}(\partial\oh)}
\newcommand{\Hdiv}{H(\dive)}
\newcommand{\Hdzero}{H(\text{div0})}

\renewcommand{\Omega}{\om}

\usepackage[font=footnotesize]{caption} %

\renewcommand{\fpb}{\varPi}
\renewcommand{\fp}{\varPi}
\newcommand{\Ma}[1]{\cl{M}_\alpha({#1})}

\begin{document}
\begin{abstract}
  We construct computable error estimators, which double as
  loss functions for neural networks,  for a
  class of parametric nonlinear partial differential equations with a
  monotonicity property, and prove that they are  globally reliable and
  efficient. The value of such a loss function
  is bounded above and below by the squared
  error in the natural trial norm, for every trial function, not
  merely for those near the exact solution; this global property rests
  on monotonicity.
  The construction rests on
  splitting the nonlinear operator into a linear part and a strongly
  monotone closure, and on measuring the linear residual in a discrete
  dual norm.  Since the closure contributes a dual norm that admits no
  closed form when the trial norm is stronger than an $L_2$ norm, the
  estimator is built around a computable surrogate for it, required
  only to satisfy a pairing bound and a Lipschitz bound.  The main
  theorem then yields two-sided bounds with explicit constants and
  covers two instances. A first-order system least-squares estimator
  on conforming trial spaces is the first instance studied: its
  two-sided bound holds on the whole trial space and therefore applies
  to arbitrary approximations, including those not in a discrete
  finite element space. The second instance is a discontinuous
  Petrov--Galerkin estimator on trial spaces of finite element
  functions for each parameter value, built with broken test spaces,
  for which no conformity is required and the dual norm is computed by
  independent element-local problems.  All assumptions are verified
  for a model class of nonlinear fluxes, with the constants tracked
  explicitly in terms of the parameter range.  Being computable for an
  arbitrary input, both estimators serve as variationally correct loss
  functions for neural network approximations of parameter-to-solution
  maps.
\end{abstract}
\maketitle

\section{Introduction}
\label{sec:intro}

This paper develops a computable, globally reliable and efficient {\em
  a posteriori} error estimator for a class of nonlinear diffusion
problems, in two guises: a discontinuous Petrov--Galerkin (DPG)
estimator built on broken test spaces, and a first-order system
least-squares (FOSLS) estimator built on conforming ones.  Both are
instances of a single abstract result.  Because the estimator is a
computable functional of an arbitrary trial function (not merely of a
discrete solution), it doubles---in the context of operator learning---as a neural network {\em loss function}
whose value certifies accuracy. More precisely, we apply our findings to learning the parameter-to-solution map for parameter-dependent nonlinear diffusion problems,
taken up in~\S\ref{sec:nn} and further commented on below. The estimator theory stands on its own,
though,
and is of interest wherever two-sided error control for monotone
nonlinear partial differential equations (PDEs) is needed, adaptivity
being the obvious example.

The difficulty we must overcome is intrinsic to residual-based error
control for nonlinear parametric operator equations $\Aa(\clu) = \ell$
seeking $\clu$ in some normed space $\Ua$.  (Throughout $\alpha$ is a
high-dimensional parameter within the PDE.) What one wants is a computable error
estimator for any candidate $\clw$, denoted by $\cl L(\alpha, \clw)$,
obeying
\[
  c \, \| \clu - \clw \|_{\Ua}^2
  \;\le\; \cl L(\alpha, \clw)
  \;\le\; C \, \| \clu - \clw \|_{\Ua}^2
\]
with constants independent of the
discretization---the property called {\em variational correctness} in~\cite{BDO}.
For linear well-posed problems such two-sided bounds follow from 
the Babu\v{s}ka--Ne\v{c}as theorem, and the residual in the dual
norm $\| \ell - \Aa \clw \|_{\Vap}$ is the natural candidate for
$\cl L(\alpha, \clw).$  Two
obstructions arise in the nonlinear case.  First, existence and
uniqueness require structural hypotheses on the nonlinearity
(such as local derivative control near the solution or monotonicity
\cite{Showa96,Zeidl90}), and the stability that underlies a two-sided
bound is then typically only {\em local}, valid in a neighborhood of
a regular solution at which the  derivative is an
isomorphism~\cite{carstensen2017}.  Second, and already present in the
linear case, the dual norm is not computable: evaluating
$\| \cdot \|_{\Vap}$ exactly requires solving a global problem in
$\Va$.

We resolve both by combining two ideas in Cantin and Heuer~\cite{CH}.  The first is a splitting of
the nonlinear operator into a
{\em linear} operator $\Ba$ and a  nonlinear
{\em closure} $\Ca$
that is strongly monotone on the kernel of $\Ba$.
The residual then separates into a linear part,
to which the well-developed linear theory applies, and a closure part
controlled by monotonicity; the resulting bounds are {\em global},
holding for every trial function rather than only near the solution.
The second is the DPG device of breaking the test space
\cite{DemkoGopal11,DPGacta}: with an element-local test space, the
dual norm of the linear residual decouples into independent local
problems and becomes computable, at the price of hybrid trial spaces
carrying interface traces, and of a Fortin operator~\cite{GopalQiu14}
relating the broken discrete dual norm to the continuous one.

Two aspects of the present study  go beyond~\cite{CH}. First, while~\cite{CH} only provides {\em a priori} error analysis, we provide an error estimator and an {\em a posteriori} analysis.
Second, we also develop a least-squares type error estimator for the same nonlinear PDE. In doing so we encounter a third obstruction  and it
dictates the form of our abstract theory (which covers both DPG
and least-squares instances).
The nonlinear closure contributes a term $\| \Ca(\clw) \|_{\Uap}$ which is
itself a dual norm, and which admits a closed form only when the trial
norm is of $L_2$ type.  In the least-squares setting the trial norm is
a graph norm,
strictly stronger than the $L_2$ product norm, and the $\Uap$-norm is not locally computable.  We therefore do not treat the exact dual
norm as the object of study.  Instead, the estimator is built from the
outset around a computable surrogate $\cl M_\alpha$ that 
satisfies two abstract conditions. A single theorem then delivers global reliability and
efficiency, with constants exhibited explicitly in terms of the
monotonicity and Lipschitz data of the closure and the norm of the
Fortin operator.

Two instances of this theorem occupy the rest of the paper.  In the least-squares instance, named 
{\em MFOSLS} loss function,  the trial space is conforming
and the two-sided bound holds on the entire
trial space $\Ua$, so that the estimator controls the error of an
{\em arbitrary} approximation, of finite element type or not.  In the DPG instance, named 
{\em MDPG} loss, the formulation is ultraweak and hybrid, the test
space is broken, and a Fortin operator and an oscillation term enter,
the latter vanishing for piecewise constant data.  The two instances
share one theorem but differ in what they demand of the approximant:
MFOSLS requires conformity---the flux $q$ must be in $ \Hdiv$ and the primal solution variable $u$ must be in $ \Ho^1(\om)$---
while MDPG places all interior variables in $L_2$.  We verify every assumption, for a class of
nonlinear fluxes covering standard models, and track how the constants
depend on the parameter range.

Our development is related to several strands of work.  Nonlinear DPG,
as residual minimization over a broken test space, has been formulated in~\cite{carstensen2017}, where
a priori quasi-optimality and a posteriori bounds were obtained under
 local stability. This generalized the a posteriori error control for
linear DPG methods developed earlier in~\cite{CarstDemkoGopal14}.  The
least-squares side has its own long lineage
\cite{FOSLS_coercivity,BocheGunzb09a}, in which coercivity of the
first-order system furnishes the error--residual equivalence directly.
Residual minimization in dual norms for nonlinear problems has
attracted renewed attention recently, notably in Banach spaces with
the $p$-Laplacian as model in~\cite{MugaPererRojasRuiz26}. Closer to
the present viewpoint
are variationally correct losses for linear parametric problems
studied in~\cite{BDO,CDG} and~\cite{FuhrRojas25}, where the error of an
arbitrary approximation of a linear well-posed problem is shown
equivalent to a computable residual plus an estimable remainder.
Another  independent contemporaneous work~\cite{VeraHeuerPraet2026} 
employs the auxiliary-variable device of~\cite{CH}, but for the
narrower class of semilinear problems in which the nonlinearity enters
only through additive lower order terms, not through the leading order
flux.  In place of the monotonicity used in~\cite{CH}, they enforce
the constitutive relations by an $L_2$ least-squares penalty under a
Lipschitz assumption, and prove a C\'ea-type quasioptimality estimate,
conditional on the existence of an exact solution and on an 
Aubin--Nitsche-type closeness hypothesis.  The estimator developed
below, in contrast, relies on monotonicity so is unconditionally reliable and efficient and covers nonlinear fluxes.

We apply these results in the currently vibrant area of operator
learning; see, e.g., \cite{Stu2,KLLABSA2023,BHKS2021,DDP}. One distinguishes
purely data-driven methods, to which most of these references belong, from
so-called ``physics-informed'' approaches. The former require a large number
of high-fidelity solutions as training data in order to perform regression,
usually in an $L_2$-type space. The latter avoid this offline cost by
training with residual-type loss functions; see, e.g.,
\cite{Kar1,Kar2,Kar3,Mishra1,Pardols,ZeinhMasriMarda24}. Since the uncertain
success of the optimization renders a priori expressivity results of little
help for assessing a training outcome, the only available source of
information is the loss function itself.
For linear PDEs, this observation has led in~\cite{BDO} to the notion of
variational correctness, requiring a parameter-uniform proportionality
between error and residual in norms dictated by a stable variational
formulation. As noted above, this may require dual-norm residuals. An
important first step towards practicality, already exploited
in~\cite{BDO} (see also~\cite{li2024}, without accuracy quantification), is
to reformulate the PDE as a first-order system. For a class of relevant
models, $L_2$-least-squares formulations of such systems are stable, so that
the dual-norm problem disappears;
see~\cite{FOSLS_coercivity,BocheGunzb09a,CaiPINN} and, for time-dependent
problems,~\cite{KF1,KF2,GS}. Combined with POD model reduction, this has
been used in~\cite{QCD} to learn accuracy-controlled parameter-to-solution
maps for second-order elliptic problems with mixed boundary conditions and
for linear elasticity. The alternative route via ultraweak DPG formulations
was compared with FOSLS for linear problems in~\cite{CDG}, with respect to
parameter robustness. The present paper pursues both routes for monotone
nonlinear diffusion, in either case with variational correctness.

The paper is organized as follows.  Section~\ref{sec:MDPG} sets up the
abstract framework, states the assumptions on the splitting, on the
Fortin operator and on the surrogate $\cl M_\alpha$, defines the error
estimator and proves its global reliability and efficiency; the
least-squares and DPG instances of the abstract setting are described
there.  Section~\ref{sec:nonlinear-diffusion} specializes to
parametric diffusion with nonlinear fluxes and exhibits the two
concrete estimators.  Section~\ref{sec:verification} verifies all
assumptions for a model class of fluxes, with explicit constants.
Section~\ref{sec:nn} describes the resulting loss functions for
learning parameter-to-solution maps, and Section~\ref{sec:numerics} closes
with numerical experiments comparing the MDPG and MFOSLS losses as training
objectives for neural network surrogates of the parameter-to-solution map:
the calibration of the loss against the error across parameter ranges, the
effect of network depth, training-set size, and polynomial degree, and an
example with sixteen parameters.

\section{Monotone loss functions}
\label{sec:MDPG}

Let $\Ua$ and $\Va$ be Hilbert spaces over $\bb R$, depending on a
parameter $\alpha$ in a compact set $\cQ$ and let $\cl f \in \Vap,$ the
dual space of $\Va$. We study the parametric family of nonlinear
partial differential equations cast as an  operator equation
\begin{equation}
  \label{eq:Au=ell}
  \Aa(\clu) = \cl f
\end{equation}
for a nonlinear operator $\Aa : \Ua \to \Vap$. When a unique solution
to~\eqref{eq:Au=ell} exists, the parameter-to-solution map
$\alpha \mapsto \clu(\alpha)$ is well defined. Our objective is
to approximate this map.

When the PDE nonlinearity is sufficiently structured to admit a
reformulation exploiting monotonicity, 
the framework of~\cite{CH} considers nonlinear problems
posed
in terms of two operators, a bounded {\em linear} operator
$\Ba : \Ua \to \Vap$ and a continuous operator
$\Ca : \Ua \to \Uap$ to which all the nonlinearity is confined, and
which is strongly {\em monotone} in the directions of the null space
of~$\Ba$ (see~\eqref{eq:S4-mono} below). Specifically, the nonlinear problem
is to  find a $\cl u \in \Ua$ satisfying
\begin{equation}
  \label{eq:A-split-BandC}
  \begin{pmatrix}
    \Ba \clu \\ \Ca(\clu) 
  \end{pmatrix}
  =
  \begin{pmatrix}
    \ell \\ 0 
  \end{pmatrix}
  \quad \text{ in } \Vap \times \Uap.
\end{equation}
Comparing this system with \eqref{eq:Au=ell}, the left- and right-hand 
sides define the specific structures of the operator $\Aa(\clu)$ and 
the functional $\cl f$. While some problems naturally present themselves 
in the standard form of \eqref{eq:Au=ell}, several interesting 
applications emerge when \eqref{eq:Au=ell} is reformulated into the 
split system \eqref{eq:A-split-BandC}. As we will see in a class of 
examples later in~\S\ref{sec:mdpg-loss-function-diffus}, this is done 
by extending the trial space in~\eqref{eq:Au=ell} to include an extra
variable. Namely, a nonlinear constitutive relation defining the extra
variable is enforced weakly by $\Ca$, referred to as a {\em nonlinear
  closure}. The remaining equations, rendered linear by the
extra variable, are collected into~$\Ba$. 

The main purpose of this section is to extend the results of~\cite{CH}
to provide a new globally reliable and efficient error estimator (in
Theorem~\ref{thm:gen-nonlinear-dpg} below). It then generates
variationally correct loss functions.
Two instances, which we refer to as the ``MFOSLS loss function'' and 
the ``MDPG loss function,'' are developed in the remainder of the paper.  
The theoretical analysis of this section is based on the assumptions we 
now describe.  Let
$\ker(\Ba) := \{ \clz \in \Ua : \Ba \clz = 0 \}$ and let
$\PB: \Ua \to \ker(\Ba)$ be the $\Ua$-orthogonal projection onto
$\ker(\Ba)$.  The bilinear form
generated by~$\Ba$ is defined by $\ba(\clz, \clv) := (\Ba \clz)(\clv)$
for all $\clz \in \Ua, \clv \in \Va$. For any normed space $X$, the
duality pairing of a functional $f \in X'$ and $z \in X$ is denoted by $f(z)$ or 
$\ip{f, z}_X$ and the subscript $X$ is omitted when understood from
context.

\subsection{The error estimator under general assumptions}

Motivated by the theory in~\cite{CH}, we isolate the following assumption
on the undiscretized problem setting.

\begin{assumption}
  \label{asm:MDPG}
  We assume that there exist positive constants $c_B$, $c_U$, $c_V$, and
  $c_{\rm Lip}$ such that for all $\clz, \clz_1, \clz_2 \in \Ua$,
  the operators $\Ba$ and $\Ca$ of the
  reformulation~\eqref{eq:A-split-BandC} satisfy
  \begin{align}
    \label{eq:S4-B-surjective}
    &\ran(\Ba) = \Vap, \text{ i.e., $\Ba$ is surjective,}
    \\
    \label{eq:S4-infsup}
    &\| \clz - \PB \clz \|_{\Ua}^2 \le c_B \| \Ba \clz \|_{\Vap}^2,
    \\
    \label{eq:S4-mono}
    &c_U \| \PB(\clz_1 - \clz_2) \|_{\Ua}^2
    \le \langle \Ca(\clz_1) - \Ca(\clz_2), \clz_1 - \clz_2 \rangle
    + c_V \| \Ba(\clz_1 - \clz_2) \|_{\Vap}^2,
    \\
    \label{eq:C-zero}
    &\langle \Ca(\clz), \clz_0 \rangle = 0  \text{  for all }
    \clz_0 \in \ker(\Ba) \implies
    \Ca(\clz) = 0 \text{ in } \Uap,      \text { and } 
    \\
    \label{eq:S4-Lip}
    &\| \Ca(\clz_1) - \Ca(\clz_2) \|_{\Uap}
    \le c_{\rm Lip} \| \clz_1 - \clz_2 \|_{\Ua}.
  \end{align}
\end{assumption}
Note that \eqref{eq:S4-infsup} says that $\Ba$ is lower-bounded on 
$\Ua/{\rm ker}(\Ba)$. To create practically computable loss functions, we 
shall  use a subset $\Uah \subseteq \Ua$ of trial functions (such as finite 
element functions, or realizations of neural networks) and a closed subspace
$\Vah \subseteq \Va$ whose dual will be used to measure residuals. Choosing 
a strict subspace $\Vah\subset\Va$ is relevant when the dual norm 
$\|\cdot\|_{\Va'}$ is not computable. The next assumption addresses this 
situation through a standard Fortin condition that ties the choice of $\Vah$
to the trial space $\Uah$.

\begin{assumption}
  \label{asm:fortin-b}
  We assume that there is a continuous linear operator
  $\fpb: \Va \to \Vah$
  satisfying
  \begin{equation}
    \label{eq:Fortin-b}
    \ba(\clw_h, \clv - \fpb \clv) = 0,
    \quad \text{ for all } \clwh \in \Uah, \; \clv \in \Va.
  \end{equation}
\end{assumption}

The first assumption, Assumption~\ref{asm:MDPG}, concerning the
undiscretized setting, consists of properties extracted from the
framework of~\cite{CH}.  Although the assumption presents many
conditions to verify, the global well-posedness (given below) is
guaranteed by the conditions, due to the results in~\cite{CH}.  Note
that Assumption~\ref{asm:fortin-b} imposes a standard Fortin condition
on the {\em linear} operator $\Ba$.  The existence of such Fortin
operators has already been verified for many common  linear 
operators~\cite{GopalQiu14} in the DPG setting discussed in 
\S\ref{ssec:mdpg-abstract}.  Note also that the choice $\Vah = \Va$
together with the identity operator $\fpb = \mathrm{id}$ always
satisfies Assumption~\ref{asm:fortin-b}. This trivial choice is of
practical relevance when the test space $\Va$ is self-dual (an
$L_2$-type space) and leads to the least-squares loss function
discussed in \S\ref{ssec:mfosls-abstract}.

The ``monotone DPG mixed formulation'' of~\cite{CH} uses a
stabilization parameter $\kappa > 0$ and seeks
$(\clv, \clu) \in \Va \times \Ua$ satisfying
\begin{equation}\label{eq:mixedC&H}
    \begin{pmatrix}
        -\kappa^{-1} \Rza & \Ba \\
        \Ba'& \Ca
    \end{pmatrix}
    \begin{pmatrix}
        \cl v \\ \cl u
    \end{pmatrix}
    =
    \begin{pmatrix}
        \ell \\ 0
    \end{pmatrix}
    \qquad
    \text{ in } \; \Vap \times \Uap,
\end{equation}
where $\Rza : \Va \to \Vap$ is the Riesz isometry,
and $\Ba': \Va \to \Uap$ is the  dual
of~$\Ba$. Eliminating $\clv=\kappa \Rza^{-1}(\Ba \clu-\ell)$ from the above system
\eqref{eq:mixedC&H},   we obtain a reduced form of the problem: find
$\clu \in \Ua$ such that
\begin{equation}
  \label{eq:SchurComp-problem}
    \langle \cl S_{\kappa} \clu, \clw \rangle
    = \kappa \langle \Rza^{-1} \ell, \Ba \clw \rangle,
    \qquad \text{ for all } \clw \in \Ua,
\end{equation}
where the Schur complement $\cl S_\kappa$ is given by
\begin{equation}
  \label{eq:SchurComp}
    \langle \cl S_\kappa \clz, \clw \rangle
    := \kappa \langle \Rza^{-1} \Ba \clz, \Ba \clw \rangle
    + \langle \Ca(\clz), \clw \rangle,
    \qquad \clz, \clw \in \Ua.
\end{equation}
Note that unlike standard DPG mixed formulations, there is no variable like
$\veps$ in~\eqref{eq:mixedC&H} which represents the full residual of
the original problem~\eqref{eq:A-split-BandC} that could serve as an error 
estimator. Below, we extend the analysis of~\cite{CH} to nevertheless provide 
also in the present context a globally reliable and efficient error estimator.

First, let us recall the salient results proved in~\cite{CH}.  The
penalization parameter $\kappa$ is fixed once and for all so that
\begin{equation}
  \label{eq:S4-kappa}
  \kappa > \kappa_0 := c_V + c_B c_U.
\end{equation}
By \cite[Proposition~2.1 and Theorem~2.3]{CH},
conditions~\eqref{eq:S4-infsup} and~\eqref{eq:S4-mono} of
Assumption~\ref{asm:MDPG}, together with this choice of $\kappa$
(the remaining coercivity hypothesis of \cite[Proposition~2.1]{CH}
holds with constant one since $\Rza$ is the Riesz isometry),
guarantee that~\eqref{eq:mixedC&H} is uniquely solvable and that 
\begin{equation}
  \label{eq:S4-key}
  c_U \| \clz_1 - \clz_2 \|_{\Ua}^2
  \le \langle \Ca(\clz_1) - \Ca(\clz_2), \clz_1 - \clz_2 \rangle
  + \kappa \| \Ba(\clz_1 - \clz_2) \|_{\Vap}^2
\end{equation}
holds for all $\clz_1, \clz_2 \in \Ua$.  This estimate shows the {\em
  strong monotonicity} property (as defined, say, in~\cite{Showa96})
of the Schur complement.  Moreover, by~\cite[Proposition~2.4]{CH},
conditions~\eqref{eq:S4-B-surjective} and~\eqref{eq:C-zero} of
Assumption~\ref{asm:MDPG} imply that the component $\clu$ of the
unique solution $(\clv, \clu) \in \Va \times \Ua$
of~\eqref{eq:mixedC&H} satisfies~\eqref{eq:A-split-BandC}.  Indeed,
the kernel condition~\eqref{eq:C-zero} first yields $\Ca(\clu) = 0$ from the second equation in 
\eqref{eq:mixedC&H}.
Hence $\Ba'\clv=0$, which by injectivity of $\Ba'$ (see \eqref{eq:S4-B-surjective}), implies $\clv=0$ 
and therefore $\Ba \clu=\ell$.

The design of our error estimator is motivated by the residual of the
two-equation system in~\eqref{eq:A-split-BandC}, $\Ba \clu = \ell$ and
$\Ca(\clu)=0$. This suggests that the residual of some $\clz \in \Ua$
should consist of terms involving $\| \ell - \Ba \clz \|_{\Vap}$ as
well as $\| \Ca \clz \|_{\Uap}$. These two contributions must be scaled
differently, as the analysis of the Schur complement
problem~\eqref{eq:SchurComp-problem} shows. Indeed, at a
$\clz \in \Ua$, the residual functional
of~\eqref{eq:SchurComp-problem} is
$\rho(\clw)= \ip{\cl S_\kappa \clz, \clw} - \kappa \langle \Rza^{-1}
\ell, \Ba \clw \rangle,$ which we can rewrite as
$ \rho(\clw) = \kappa \langle \Rza^{-1}(\Ba \clz - \ell), \Ba \clw
\rangle + \langle \Ca(\clz), \clw \rangle.  $ This motivates the
definition of the following loss function, or residual error estimator,
\begin{equation}\label{eq:C&H-loss}
  \La \clz = \kappa \| \ell - \Ba \clz \|_{\Vap}^2
  + \| \Ca(\clz) \|_{\Uap}^2,
  \qquad \clz \in \Ua, \; \alpha \in \cQ.
\end{equation}
However, $\La\cdot$ is defined in an infinite-dimensional setting and
neither of its two terms need be computable in general.  For the first
term, we replace the $\Vap$-norm by the $(\Vah)'$-norm, whose
computability is governed by the choice of a subspace
$\Vah \subseteq \Va$, as discussed in the instances
(\S\ref{ssec:mfosls-abstract} and \S\ref{ssec:mdpg-abstract}) below.
For the second term, we admit computable surrogates $\Ma\cdot$ of the
dual norm $\| \Ca(\cdot) \|_{\Uap}$, subject to the following
assumption.

\begin{assumption}
  \label{asm:N}
  We assume that $\Ma\cdot : \Ua \to [0, \infty)$ is a functional
  satisfying, for some constant $c_M > 0$,
  \begin{align}
    \label{eq:N-pairing}
    \ip{ \Ca(\clz), \clw }  \le\;
    & \Ma{\clz} \, \| \clw \|_{\Ua},
    \\
    \label{eq:N-lip}
    & \Ma{\clz}   \le c_M \| \clz - \bar{\clz} \|_{\Ua}
      \quad \text{whenever } \bar\clz \in \Ua \text{ satisfies }
      \Ca(\bar\clz) = 0,
  \end{align}
  for all $\clz, \clw \in \Ua$.
\end{assumption}
The trivial choice $\Ma{\clz} := \| \Ca(\clz) \|_{\Uap}$,
of course, satisfies \eqref{eq:N-pairing} of the assumption by the
definition of the dual norm, and~\eqref{eq:N-lip} with
$c_M = c_{\rm Lip}$ by~\eqref{eq:S4-Lip}, but it need not be
computable.

\begin{definition}
  \label{defn:mdpg}
  Given a closed subspace $\Vah \subseteq \Va$ and a functional
  $\Ma\cdot$ satisfying Assumption~\ref{asm:N}, the generated
  {\bf{error estimator}} or 
  {\bf{loss function}} is defined by
  \begin{equation}
    \label{eq:C&H-lossh}
    \cl L(\alpha, \clz_h)
    := \kappa \| \ell - \Ba \clz_h \|_{(\Vah)'}^2
    + \Ma{\clz_h}^2,
    \qquad \clzh \in \Uah, \; \alpha \in \cQ.
  \end{equation}
\end{definition}

In the next theorem, the terms 
\[
  \osc(\ell) = \sup_{0 \ne \clv \in \Va}
  \frac{|\ell(\clv - \fpb\,\clv)|}{\|\clv\|_{\Va}}
  \quad\text { and } \quad 
  \| \fp  \|_{L(\Va)} = \sup_{0 \ne \clv \in \Va}
  \frac{ \|\fp  \clv \|_{\Va}}{\| \clv \|_{\Va}}.
\]
represent a data oscillation term and the operator norm of $\fpb$, respectively.
The operator norm of $\Ba$ is denoted by $\| \Ba \|_{L(\Ua, \Vap)}$.

\begin{theorem}[Global variational correctness]
  \label{thm:gen-nonlinear-dpg}
  Suppose Assumptions~\ref{asm:MDPG}, \ref{asm:fortin-b},
  and~\ref{asm:N} hold and
  $\kappa$ satisfies~\eqref{eq:S4-kappa}. Then \eqref{eq:mixedC&H} has
  a unique solution whose $\clu$-component solves $\Ba \clu = \ell$
  and $\Ca(\clu) = 0$.  Moreover, the loss function $\cl L$
  of Definition~\ref{defn:mdpg} is
  variationally correct, i.e., there are positive constants $c$ and
  $C$ such that for any $\alpha \in \cQ$ and $\clwh \in \Uah$,
  \begin{align}
    \label{eq:reliability-L}
    c & \| \clu  - \clw_h \|_\Ua^2  \le
    \cl L(\alpha,\clw_h)\,
          \| \fpb \|_{L(\Va)}^2
          + \osc(\ell)^2,
    \\
    \label{eq:efficiency-L}
    \cl L(\alpha,\clw_h) \le C & \| \clu - \clw_h \|_\Ua^2.
  \end{align}
  We may choose $C = \kappa \| \Ba \|_{L(\Ua, \Vap)}^2 + c_M^2$,
  with $c_M$ from \eqref{eq:N-lip}, and $c$, defined
  in~\eqref{eq:8} below, is determined by the $c_U$ from~\eqref{eq:S4-mono}.
\end{theorem}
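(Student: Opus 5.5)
The well-posedness part is already recorded before the statement: \cite[Proposition~2.1, Theorem~2.3, Proposition~2.4]{CH} together with \eqref{eq:S4-kappa} give unique solvability of \eqref{eq:mixedC&H}, with $\Ba\clu=\ell$ and $\Ca(\clu)=0$. I will cite this and concentrate on the two bounds.

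\emph{Efficiency.} Since $\Vah\subseteq\Va$, the discrete dual norm is dominated by the continuous one: $\|\ell-\Ba\clwh\|_{(\Vah)'}\le\|\ell-\Ba\clwh\|_{\Vap}$. Using $\ell=\Ba\clu$, this is at most $\|\Ba\|_{L(\Ua,\Vap)}\|\clu-\clwh\|_\Ua$. The surrogate term is bounded by \eqref{eq:N-lip} with $\bar\clz=\clu$, which is admissible because $\Ca(\clu)=0$; this gives $\Ma{\clwh}\le c_M\|\clu-\clwh\|_\Ua$. Squaring and adding yields \eqref{eq:efficiency-L} with $C=\kappa\|\Ba\|^2+c_M^2$.

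\emph{Reliability.} Apply \eqref{eq:S4-key} with $\clz_1=\clwh$ and $\clz_2=\clu$, and set $e=\clu-\clwh$. Since $\Ca(\clu)=0$,
\[
c_U\|e\|_\Ua^2\le -\ip{\Ca(\clwh),e}+\kappa\|\ell-\Ba\clwh\|_{\Vap}^2 .
\]
The pairing \eqref{eq:N-pairing} (applied with $-e$) bounds the first term by $\Ma{\clwh}\|e\|_\Ua$, and Young's inequality then gives $\tfrac{c_U}{2}\|e\|^2+\tfrac1{2c_U}\Ma{\clwh}^2$. The main step is to pass from the $\Vap$ norm of the residual to the $(\Vah)'$ norm. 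For $\clv\in\Va$, split
\[
(\ell-\Ba\clwh)(\clv)=(\ell-\Ba\clwh)(\fpb\clv)+\ell(\clv-\fpb\clv)-\ba(\clwh,\clv-\fpb\clv).
\]
The last term vanishes by \eqref{eq:Fortin-b}. The first is at most $\|\ell-\Ba\clwh\|_{(\Vah)'}\|\fpb\|_{L(\Va)}\|\clv\|_\Va$, and the second is at most $\osc(\ell)\|\clv\|_\Va$. Hence
\[
\|\ell-\Ba\clwh\|_{\Vap}^2\le 2\|\fpb\|^2\|\ell-\Ba\clwh\|_{(\Vah)'}^2+2\osc(\ell)^2 .
\]

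Combining these estimates gives
\[
\tfrac{c_U}{2}\|e\|^2\le 2\kappa\|\fpb\|^2\|\ell-\Ba\clwh\|^2_{(\Vah)'}+\tfrac{1}{2c_U}\Ma{\clwh}^2+2\kappa\osc(\ell)^2 .
\]
To reach the stated form $\cl L\|\fpb\|^2+\osc^2$, note that $\|\fpb\|_{L(\Va)}\ge1$ whenever $\fpb\ne0$. Indeed, applying \eqref{eq:Fortin-b} to $\fpb\clv\in\Vah\subseteq\Va$ shows that $\fpb$ may be taken to be a projection, or one can simply use the stated normalization. So the $\Ma{}^2$ term may also be multiplied by $\|\fpb\|^2$. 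Choosing
\[
c=\frac{c_U/2}{\max\{2,\,1/(2c_U),\,2\kappa\}}
\]
(with the exact constant to be fixed as in \eqref{eq:8}), and absorbing the factor $\kappa$ on the oscillation into $c$, yields \eqref{eq:reliability-L}.

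The one delicate point is this normalization $\|\fpb\|\ge1$. If $\fpb$ is not a projection, I would instead keep the $\Ma{}^2$ term separately or argue directly that $\fpb$ can be replaced by a projection without loss.
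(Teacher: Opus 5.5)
Your argument is the paper's up to the combined estimate
\[
\tfrac{c_U}{2}\|\clu-\clwh\|_\Ua^2\le 2\kappa\|\fpb\|_{L(\Va)}^2\|\ell-\Ba\clwh\|^2_{(\Vah)'}+\tfrac{1}{2c_U}\Ma{\clwh}^2+2\kappa\osc(\ell)^2 .
\]
Swapping $\clz_1,\clz_2$ in \eqref{eq:S4-key} and applying \eqref{eq:N-pairing} to $-(\clu-\clwh)$ is harmless, and your efficiency proof is the paper's Step~3. The gap is in the final step.

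The claim that $\|\fpb\|_{L(\Va)}\ge 1$ whenever $\fpb\ne0$ does not follow from Assumption~\ref{asm:fortin-b}. Suppose $\Uah\subseteq\ker(\Ba)$. Then \eqref{eq:Fortin-b} holds for $\varepsilon Q$, where $Q$ is the orthogonal projection onto $\Vah$ and $\varepsilon>0$ is arbitrary. As $\varepsilon\to0$, the right side of \eqref{eq:reliability-L} tends to at most $\|\ell\|_{\Vap}^2$. So if $\Uah$ contains elements far from $\clu$, no $\|\fpb\|$-independent $c$ can work. Your fallback justifications do not close this:
\begin{itemize}
\item Applying \eqref{eq:Fortin-b} to $\fpb\clv$ only shows that $\fpb^2$ is again a Fortin operator, not that $\fpb$ is idempotent.
\item Replacing $\fpb$ by a Fortin projection such as $Q+\fpb(I-Q)$ is not ``without loss'', because $\|\fpb\|_{L(\Va)}$ and $\osc(\ell)$ in \eqref{eq:reliability-L} refer to the given $\fpb$.
\item No normalization of $\fpb$ is stated anywhere.
\end{itemize}

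The repair is the paper's bookkeeping. Write $\tfrac{1}{2c_U}\Ma{\clwh}^2=\tfrac{1}{2c_U\|\fpb\|^2}\,\|\fpb\|^2\Ma{\clwh}^2$. The right side is then at most $\gamma\|\fpb\|^2\cl L(\alpha,\clwh)+2\kappa\osc(\ell)^2$ with $\gamma=\max\bigl(2,1/(2c_U\|\fpb\|^2)\bigr)$, which yields \eqref{eq:8}. The constant now depends on $\|\fpb\|$, but nothing beyond $\fpb\ne0$ is needed; the paper's formula tacitly uses that too.

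Alternatively, your $\|\fpb\|$-free constant is valid under the nondegeneracy condition $\Uah\not\subseteq\ker(\Ba)$. In that case $N:=\{\clv\in\Va:\ba(\clwh,\clv)=0\text{ for all }\clwh\in\Uah\}$ is a closed proper subspace. Condition \eqref{eq:Fortin-b} says $\clv-\fpb\clv\in N$. For $0\ne\clv\perp N$, Pythagoras gives $\|\fpb\clv\|_\Va^2=\|\clv\|_\Va^2+\|\clv-\fpb\clv\|_\Va^2\ge\|\clv\|_\Va^2$, hence $\|\fpb\|\ge1$. Note that in this case the paper's $\gamma$ is at most $\max(2,1/(2c_U))$, so its $c$ is never smaller than yours.
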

\begin{proof}
  Conditions~\eqref{eq:S4-B-surjective} and~\eqref{eq:C-zero} of
  Assumption~\ref{asm:MDPG} imply that \eqref{eq:mixedC&H} has a
  unique solution and component $\clu$ of the unique solution
  $(\clv, \clu) \in \Va \times \Ua$ of~\eqref{eq:mixedC&H}
  satisfies~\eqref{eq:A-split-BandC} (see~\cite[Theorem~2.3 and
  Proposition~2.4]{CH}). We now focus on proving the new 
  estimates~\eqref{eq:reliability-L} and~\eqref{eq:efficiency-L}.

  {\em Step 1.}  Let us first prove the reliability
  estimate~\eqref{eq:reliability-L}. Since $\Ba \clu = \ell$ and
  $\Ca(\clu) = 0$, applying~\eqref{eq:S4-key} with $\clz_1 = \clu$ and
  $\clz_2 = \clwh$,
  \begin{equation}
    \label{eq:rel-basic}
    c_U \| \clu - \clwh \|_{\Ua}^2
    \le \langle \Ca(\clu) - \Ca(\clwh), \clu - \clwh \rangle
    + \kappa \| \Ba(\clu - \clwh) \|_{\Vap}^2.
  \end{equation}
  We proceed to bound the two terms on the right hand side, beginning
  with the last. Since $\Ba \clu = \ell$, for any $\clv \in \Va$,
  \begin{align*}
    \big( \Ba( \clu - \clwh) \big)(\clv)
    & = \big( \ell - \Ba \clwh \big)(\clv)
     = \ell(\clv) - \ba(\clwh, \clv)
    \\
    & = \ell(\clv - \fpb \clv) - \ba(\clwh, \clv - \fpb \clv)
      + \ell(\fpb \clv) - \ba(\clwh, \fpb \clv)
    \\
    & = \ell(\clv - \fpb \clv) + \big( \ell - \Ba \clwh \big)( \fpb \clv)
    \\
    & \le \big( \osc(\ell)
      + \| \ell - \Ba \clwh \|_{(\Vah)'} \| \fpb \|_{L(\Va)}  \big)
      \| \clv \|_{\Va},
  \end{align*}
  where we have used~\eqref{eq:Fortin-b} in the third step and
  $\fpb \clv \in \Vah$ in the last.  Taking the supremum
  over $0 \ne \clv \in \Va$ and using $(a+b)^2 \le 2 a^2 + 2 b^2$, yields
  \begin{equation}
    \label{eq:rel-bound-B}
    \kappa \| \Ba(\clu - \clwh) \|_{\Vap}^2
    \le 2 \kappa \osc(\ell)^2
    + 2 \kappa \| \fpb \|_{L(\Va)}^2 \| \ell - \Ba \clwh \|_{(\Vah)'}^2.
  \end{equation}

  {\em Step 2.}  Next, to bound the term with the nonlinear operator,
  since $\Ca(\clu) = 0$, by~\eqref{eq:N-pairing},
  \[
    | \langle \Ca(\clu) - \Ca(\clwh), \clu - \clwh \rangle |
    = | \langle \Ca(\clwh), \clu - \clwh \rangle |
    \le \Ma{\clwh} \, \| \clu - \clwh \|_{\Ua}.
  \]
  Applying Young's inequality,
  \begin{equation}
    \label{eq:rel-bound-C}
    | \langle \Ca(\clu) - \Ca(\clwh), \clu - \clwh \rangle |
    \le \frac{1}{2 c_U} \Ma{\clwh}^2
    + \frac{c_U}{2} \| \clu - \clwh \|_{\Ua}^2.
  \end{equation}
  Substituting~\eqref{eq:rel-bound-B}
  and~\eqref{eq:rel-bound-C} into~\eqref{eq:rel-basic}, and absorbing
  the last term of~\eqref{eq:rel-bound-C} into the left hand side,
  \begin{equation}
    \label{eq:7}
    \begin{aligned}
    \frac{c_U}{2} \| \clu - \clwh \|_{\Ua}^2
   & \le \frac{1}{2 c_U} \Ma{\clwh}^2
    + 2 \kappa \| \fpb \|_{L(\Va)}^2 \| \ell - \Ba \clwh \|_{(\Vah)'}^2
    + 2 \kappa \osc(\ell)^2\\
    & \le\max\left(2, \frac{1}{2c_U\|\fpb\|^2_{L(\Va)}}\right)\|\fpb\|^2_{L(\Va)}
    \cl L(\alpha,\clw_h) +2 \kappa \osc(\ell)^2.
    \end{aligned}
  \end{equation}
  Thus, defining 
  \begin{equation}
    \label{eq:8}
    \gamma =
    \max\left( 2, \; \frac{1}{2 c_U \| \fpb \|_{L(\Va)}^2}  \right),
    \qquad
    c = \frac{c_U }{2 \max(\gamma, 2 \kappa)}, 
  \end{equation}
  the reliability estimate~\eqref{eq:reliability-L} holds with the $c$
  in~\eqref{eq:8}.

  {\em Step 3.}  To prove the efficiency
  estimate~\eqref{eq:efficiency-L}, note that since
  $\Vah \subseteq \Va$ and $\Ba \clu = \ell$,
  \[
    \| \ell - \Ba \clwh \|_{(\Vah)'}
    \le \| \ell - \Ba \clwh \|_{\Vap}
    = \| \Ba(\clu - \clwh) \|_{\Vap}
    \le \| \Ba \|_{L(\Ua, \Vap)} \| \clu - \clwh \|_{\Ua},
  \]
  while $\Ca(\clu) = 0$ and~\eqref{eq:N-lip} give
  \[
    \Ma{\clwh}
    \le c_M \| \clu - \clwh \|_{\Ua}.
  \]
  Using these two bounds in the definition~\eqref{eq:C&H-lossh} of
  $\cl L$ proves the efficiency estimate~\eqref{eq:efficiency-L}
  with $C = \kappa \| \Ba \|_{L(\Ua, \Vap)}^2 + c_M^2$.
\end{proof}

\begin{remark}
  \label{rem:quasioptimality}
  As is well known (see, e.g., \cite[Remark~2.5]{CarstDemkoGopal14})
  {\it a posteriori} efficiency and reliability estimates like that of
  Theorem~\ref{thm:gen-nonlinear-dpg} imply {\it a priori}
  quasi-optimal error estimates for any loss minimizer
  $\clu_h \in \arg \min_{\clw \in \Uah}\cl L(\alpha, \clw)$ when
  $\osc(\ell)$ vanishes or is comparable to the best approximation error.
  The argument is
  \begin{align}
    \nonumber
    \| \clu - \clu_h \|_{\Ua}^2
    &\le \frac 1 c
      \cl L(\alpha, \clu_h) \| \fpb \|_{L(\Va)}^2 +
      \frac 1 c\osc(\ell)^2 
    \le \frac 1 c \cl L(\alpha, \clw_h) \| \fpb \|_{L(\Va)}^2 +
      \frac 1 c \osc(\ell)^2 \\
    \label{eq:quasioptimality}
    &\le \left(\frac C c \| \fpb \|_{L(\Va)}^2 \right)
      \| \clu - \clw_h \|_{\Ua}^2 
      + \frac 1 c \osc(\ell)^2,
  \end{align}
  for every $\clw_h \in \Uah$, using
  reliability~\eqref{eq:reliability-L}, minimality, and
  efficiency~\eqref{eq:efficiency-L} in succession.
\end{remark}

\subsection{The least-squares instance}
\label{ssec:mfosls-abstract}

Consider the choice $\Vah = \Va$ and $\fpb = \mathrm{id}$, which, as
already noted, satisfies Assumption~\ref{asm:fortin-b} trivially with
$\| \fpb \|_{L(\Va)} = 1$, and which makes $\osc(\ell) = 0$ for any
$\ell \in \Vap$.  For this choice,
Theorem~\ref{thm:gen-nonlinear-dpg} yields
\begin{equation}
  \label{eq:ls-vc}
  c\, \| \clu - \clw \|_{\Ua}^2
  \;\le\; \cl L(\alpha, \clw)
  \;\le\; C\, \| \clu - \clw \|_{\Ua}^2,
  \qquad \clw \in \Uah,
\end{equation}
with the same constants $c, C$ there, and the trial subspace $\Uah$
may be taken to be all of $\Ua$. Indeed,
no mesh ever appears  in the analysis of this instance, so $\clw$ in~\eqref{eq:ls-vc} 
can even  be taken from a meshless NN  hypothesis class within~$\Ua$.
The first term of the loss is
then $\kappa \| \ell - \Ba \clwh \|_{\Vap}^2$, which is computable
whenever the Riesz map $\Rza$ is explicitly invertible: e.g. as mentioned 
earlier, when $\Va$ is an $L_2$-type space, identified with its own dual, 
the term is a plain integral of the squared strong-form residual.  
Complemented by a computable choice of $\Ma\cdot$ (see~\eqref{eq:mfosls-N} 
for an example),  every term of the loss then becomes directly computable 
(without any element-local inversions as in the next option of 
\S\ref{ssec:mdpg-abstract}).  Since the examples we have in mind all have 
first order differential operators in $\Ba$, we think of this loss function 
as a nonlinear first-order system least-squares (FOSLS) functional built  
into the above monotone framework. Accordingly, we call this instance the 
{\bf{MFOSLS loss function}}.  A concrete realization for a nonlinear
diffusion problem is given in~\S\ref{sec:fosls-loss-function}.

\subsection{The DPG instance}
\label{ssec:mdpg-abstract}

In the DPG methodology, one seeks a variational formulation with 
{\em broken test spaces} (a product of test spaces associated with mesh 
elements), localizing the Riesz map of $\Va$. It then becomes computable
by reducing each component to a finite-dimensional space (depending on
$\Uah$), rendering $\Vah$ finite-dimensional.
More precisely,
we pick a conforming finite element mesh $\oh$ from a hierarchy of
uniformly shape-regular meshes (of mesh size $h$) and set the
``broken'' infinite-dimensional test space $\Va$, and a subspace
$\Vah$ for practical computations, by the Cartesian products
\begin{equation}
  \label{eq:V-broken'}
  \Va = \bigtimes_{K\in\oh}\VK, \qquad  
  \Vah = \bigtimes_{K\in\oh}\VhK,
\end{equation}
where $\VhK \subseteq \VK$ are finite dimensional.  The (infinite
dimensional) trial space $\Ua$ may also be dependent on the mesh
$\oh$, but our ensuing analysis will produce, under
Assumption~\ref{asm:fortin-b}, {\em mesh-independent} estimates
throughout.
The first term of the loss,
$\| \ell - \Ba \clz_h \|_{(\Vah)'}^2$, is locally computable through
the discrete Riesz representation of the residual component in the
broken space $\Vah$. Namely, given an arbitrary $\clw_h \in \Uah$, let
$\Ea{b}{\clwh}$ be the unique function in $\Vah$ satisfying
\begin{equation}
  \label{eq:residual-rep}
  (\Ea{b}{\clwh}, \clv_h)_\Va
  = \ell(\clv_h) - \ba(\clwh, \clv_h), \qquad \clv_h \in \Vah.
\end{equation}
Since $\Vah$ is broken---see~\eqref{eq:V-broken'}---the function
$\Ea{b}{\clwh}$ can be computed element by element by inverting small
Gram matrices of the $\VhK$-inner product on each
$K \in \oh$.  Moreover, since
$\| \ell - \Ba \clz_h \|_{(\Vah)'}$ equals the supremum
of the right hand side of~\eqref{eq:residual-rep}, divided by
$\| \clvh \|_{\Va}$, over all $0 \ne \clvh \in \Vah$, we have
\begin{equation}
  \label{eq:L-and-E-b}
  \cl L(\alpha, \clz_h)
  = \kappa \| \Ea{b}{\clzh} \|_{\Va}^2
  + \Ma{\clz_h}^2.
\end{equation}
In practice, we compute $\cl L(\alpha, \clz_h)$ by first computing
$\Ea{b}{\clzh}$ using~\eqref{eq:residual-rep} and then
using~\eqref{eq:L-and-E-b}, which also permits easy differentiation of
the loss function with respect to the parameter.  For the second term
we take the canonical choice $\Ma{\clz} = \| \Ca(\clz) \|_{\Uap}$,
which obviously  meets Assumption~\ref{asm:N}.  This term is
easily computable only in situations where
$\| \Ca(\clz_h) \|_{\Uap}$ is computable, and we will see in examples
that we must design the $\Ca$-term to enable this.  We call the
resulting instance the {\bf{MDPG loss function}}, or the monotone DPG
loss function.  A concrete realization is given
in~\S\ref{sec:mdpg-loss-function-diffus}.

\section{Parametric diffusion with nonlinear fluxes}
\label{sec:nonlinear-diffusion}

A model nonlinear diffusion problem~\cite{carstensen2017, Zeidl90} on
a unit-sized  open bounded set $\om \subset \bb R^d$ arises from minimizers of
nonlinear ``energy functionals'' of the form
\begin{equation}
  \label{eq:min-with-varphi}
  u = \argmin_{w \in \Ho^1(\om)} \int_\om \varphi_\alpha(|\nabla w|) - f w
\end{equation}
where $f \in L_2(\om)$, and $\varphi_\alpha(t)$, a positive nonlinear function
for $t \in [0, \infty)$ and $\alpha$ in some parameter domain $\cl Q$ to be specified shortly, are given.  Here $| \cdot |$ denotes the Euclidean
length. We proceed assuming, as in \cite{carstensen2017}, that there
is a function $\phi_\alpha \in C^2(0, \infty)$ such that 
\begin{equation*}
  \varphi_\alpha(t) = \int_{0}^{t} s \,\phi_\alpha(s) \; ds.
\end{equation*}
Then, since $\varphi_\alpha'(t) = t \phi_\alpha(t)$, the Euler-Lagrange equations
of~\eqref{eq:min-with-varphi} show that $u \in \Ho^1(\om)$ satisfies
\begin{equation}
  \label{eq:3}
  \int_\om  \phi_\alpha(|\nabla u |)\, \nabla u \cdot \nabla v
  = \int_\om f v, \qquad  v \in \Ho^1(\om).
\end{equation}
Taking cue from this variational form, assume that the nonlinear flux
function $\sigma_\alpha : \bb R^d \to \bb R^d$ is invertible, i.e.,
there is a $\rho_\alpha: \bb R^d \to \bb R^d$ such that
\begin{equation}
  \label{eq:5}
  \sigma_\alpha(q) = -\phi_\alpha( |q|) q,
  \qquad
  \rho_\alpha(\sigma_\alpha(q)) = q, 
  \qquad q: \om \to \bb R^d,
\end{equation}
(An explicit
formula for $\rho_\alpha$ is needed only for the dual
formulation~\eqref{eq:4-mixed} below and for the alternative MDPG formulation
of Remark~\ref{rem:mdpg-rho-formulation}; the MDPG and MFOSLS loss functions
use $\sigma_\alpha$ only.)

Equation~\eqref{eq:3}  corresponds to the classical boundary value
problem to find $u$ satisfying
\begin{equation}
  \label{eq:4}
  \begin{aligned}
    \dive (\sigma_\alpha( \nabla u)) & = f  && \text{ in } \om,
    \\
    u & = 0 && \text{ on } \partial \om.
  \end{aligned}
\end{equation}
In first-order form, introducing the flux of the solution $u$ as a new
variable $q= \sigma_\alpha(\nabla u)$ and inverting,
$\rho_\alpha(q) = \nabla u$, equation~\eqref{eq:4} becomes
\begin{subequations}
  \label{eq:4-mixed}
  \begin{align}
    \label{eq:4-mixed-a}
    \rho_\alpha(q) - \nabla u &= 0, \quad\text{in }\om, \\
    \label{eq:4-mixed-b}
    \operatorname{div} q &= f, \quad\text{in }\om, \\
    u &= 0, \quad \text{on } \partial \om
  \end{align}
\end{subequations}
When computing with the primal formulation~\eqref{eq:4}, we need
$\sigma$ explicitly. In contrast, when computing with the dual
formulation~\eqref{eq:4-mixed}, we need $\rho$ explicitly.

We consider DPG discretizations using a shape-regular simplicial finite
element mesh $\oh$ of $\om \subset \bb R^d$.  Recall the standard
spaces 
$\Ho^1 = 
\{ w \in L_2(\om): \partial_i w \in L_2(\om), w|_{\partial\Omega}=0\}$ 
and
$\Hdiv = \{ r \in L_2(\om)^d: \dive r \in L_2(\om)\}$.
Denote by $n$
the outward unit normals on element boundaries, and let $\tr(w)$ and
$\trn(r)$ denote element-wise trace operators characterized as
$\tr(w)|_{\partial K} = w|_{\partial K}$ and
$\trn(r)|_{\partial K} = (r \cdot n )|_{\partial K}$. By standard
trace theorems, $\tr(\cdot)$ can be applied to $w \in \Ho^1$ and
$\trn(\cdot)$ to $q \in \Hdiv$.  Let $\Hoh = \tr \Ho^1$ and
$\Hmh = \trn \Hdiv$, endowed with the quotient norms
\begin{equation}
  \label{eq:quotient-norms}
  \|\hat u\|_{{\Hoh}}
  := \inf_{u \in \text{tr}^{-1}\{\hat u\}} \|u\|_{\Ho},
  \qquad
  \|\hat r_n\|_{H^{-1/2}(\partial \Omega_h)}
  := \inf_{ q \in \text{tr}_n^{-1}\{\hat r_n\}} \| q\|_{H(\div)},
\end{equation}
respectively. In addition to these spaces, we also need the  product spaces
\begin{equation}
  \label{eq:13}
  \Hdivoh = \bigtimes_{K \in \oh} H(\div, K), \qquad
  H^1(\oh) = \bigtimes_{K \in \oh} H^1(K),  
\end{equation}
representing piecewise $H(\div)$ and $H^1$ spaces. Functions in those 
spaces do not have any interelement continuity constraints. When computing 
differential operators on them, we do so piecewise, and use an  abbreviated 
notation for sums over elements, 
\[
  (\div r, w)_{\oh} = \sum_{K \in \oh} \int_K w \div r,\qquad
  (r, \nabla w)_{\oh} = \sum_{K \in \oh} \int_K r \cdot \nabla w, 
  \qquad 
\]
for all $r \in \Hdivoh, \; w \in H^1(\oh),$ where the derivatives are
computed element by element. Accordingly, their norms are given by
\begin{equation}
  \label{eq:Hdivoh-h1oh-norms}
  \| r \|_{\Hdivoh}^2 = (r, r)_\oh + (\dive r, \dive r)_\oh, \quad
  \| w \|_{H^1(\oh)}^2 = (w, w)_\oh + (\nabla w, \nabla w)_\oh.
\end{equation}
For
$\hat r_n \in H^{-1/2}(\partial \Omega_h), r \in \Hdivoh, w \in H^1(\oh),
\hat w \in \Hoh$,  the  abbreviated notation
\[
  \ip{ \hat r_n, w }_{\partial \oh} = \sum_{K \in \oh} \ip{ \hat r_n,
    w}_{H^{1/2}(\partial K)}, \quad
  \ip{ r \cdot n, \hat w}_{\partial\oh} =
  \sum_{K \in \oh} \ip{ r\cdot n, \hat w}_{H^{1/2}(\partial K)},
\]
for such frequently occurring sums over element boundaries will be useful.

\subsection{The MDPG loss function}
\label{sec:mdpg-loss-function-diffus}

We now show how to  fit also~\eqref{eq:4-mixed} into the framework for
the MDPG loss function.  Writing the gradient as a new unknown
$p := \nabla u$, problem~\eqref{eq:4} becomes the following system of
three equations
\begin{equation}
\label{eq:9}
  p = \nabla u, \quad
  \dive q = f, \quad
  q = \sigma_\alpha(p)
\end{equation}
with $u$ subject to the homogeneous Dirichlet boundary condition
built into $\Ho^1(\om)$.
In other words, defining
$B : 
\Hdiv \times \Ho^1(\om) \times L_2(\om) \to L_2(\om) \times L_2(\om)^d$ 
and
$C_\alpha : \Hdiv  \times \Ho^1(\om) \times L_2(\om) \to L_2(\om)^d$
by
\[
  B(q, u, p) 
  =
  (\dive q, p - \nabla u), 
  \qquad
  C_\alpha(q, u, p)
  =
  q  - \sigma_\alpha(p)
\]
the problem~\eqref{eq:4} is equivalent to finding
$(q, u, p) \in \Hdiv  \times \Ho^1(\om) \times L_2(\om)$ satisfying
\begin{equation}
  \label{eq:11}
    B(q, u, p) = (f, 0), \qquad C_\alpha(q, u, p) = 0.
\end{equation}

To impose the first equation of~\eqref{eq:11} in an ultraweak manner,
we multiply it by a broken test function tuple
$(r, w ) \in  \Hdivoh \times H^1(\oh) $ and integrate by parts
element by element, resulting in the following formulation of~\cite{CH}:
\begin{subequations}
  \label{eq:B-C-example}
\begin{equation}
  \label{eq:16}
  \begin{aligned}
    b_\alpha(\clu, \clv) & = \ell(\clv),
    && \text{ for all } \clv \in \Va,   
    \\
    c_\alpha(\clu; \tilde\clu) & = 0,
    && \text{ for all } \tilde\clu \in \Ua,   
  \end{aligned}
\end{equation}
where the spaces and forms are as set below, $\clv = (r, w)$ is in
$ \Va$, $\clu = (q, u, p, \hat u, \hat q_n)$ and
$\tilde\clu = (\tilde q, \tilde u, \tilde p, \tilde{\hat u},
\tilde{\hat q}_n)$ are in $ \Ua$,
\begin{align}
  \label{eq:Ua-MDPG}
  \Ua          
  & :=  L_2(\om)^d \times L_2(\om) \times L_2(\om)^d \times \Hoh \times \Hmh, \quad
  \\
  \Va
  & :=    \Hdivoh \times H^1(\oh),
    \quad \qquad \ell( \clv) :=  (f, w)_{L_2},
  \\
  \label{eq:b-alpha}
  b_\alpha(\clu, \clv)
  & :=
     \ip{ \hat q_n, w}_{\partial \oh} - \ip{ r \cdot n, \hat u}_{\partial \oh}
    + (p, r)_\oh + (u, \div r)_\oh -(q, \nabla w)_\oh
  \\ \label{eq:c-alpha}
  c_\alpha(\clu; \tilde \clu)
       & := \kappa_1^{-2} (q - \sigma_\alpha(p), \qt)_{L_2}
         + \kappa_1^{-2} \kappa_2 (q - \sigma_\alpha(p), \pt)_{L_2}
\end{align}
for some constants $\kappa_1, \kappa_2 > 0$.  Again
following~\cite{CH}, we set these weighted norms for
$\clu = (q, u, p, \hat u, \hat q_n)$ and $\clv = (r, w)$:
\begin{equation}
  \label{eq:mdpg-norms}
  \begin{aligned}
    \|\clu  \|_{\Ua}^2
    & := \| u \|_{L_2}^2 + \| p \|_{L_2}^2      
      + \| \hat u \|^2_{H^{1/2}(\partial \oh)}
      + \kappa_1^{-2}
      \left( \| q \|_{L_2}^2 + 
      \| \hat q_n \|^2_{H^{-1/2}(\partial\oh)} \right),
    \\
    \| \clv \|_{\Va}^2
    & :=  \kappa^2_1\,  \| w\|^2 _{H^1(\oh)}
      + \| r \|^2_{\Hdivoh}.
  \end{aligned}
\end{equation}
The operators $\Ba$ and $\Ca$ in~\eqref{eq:A-split-BandC} are defined
by the $b_\alpha$ and $c_\alpha$ above. Note that $\Ba$ is purely
linear and, in this case, has no $\alpha$ dependence. Now, for any integer $k\ge 0$, setting $U_{k+1} = {P}_{k+1} \cap {\Ho}^1$, $\hat U_{k+1} = \tr U_{k+1}$,  $\RT_k = \Hdiv \cap (P_k^d + x P_k)$,   $\hat\RT_k = \trn (\RT_k)$, and 
\begin{equation}
  \label{eq:mdpg-discrete}
  \Uah = P_k^d \times P_k \times P_k^d \times  \hat U_{k+1} \times \hat\RT_k,
  \qquad
  \Vah = P_{k+2}^d \times P_{k+d}
\end{equation}
\end{subequations}
as discrete subspaces of $\Ua$ and $\Va$, respectively, all
ingredients to compute the MDPG loss function of
Definition~\ref{defn:mdpg} are in place.

\begin{remark}[Alternative MDPG formulation]
  \label{rem:mdpg-rho-formulation}
  The MDPG formulation above uses the auxiliary variable $p$ and enforces
  the closure relation $q = \sigma_\alpha(p)$. An alternative formulation
  replaces $p$ with an auxiliary variable $z \in L_2(\oh)^d$ and enforces
  the inverse closure relation $z = \rho_\alpha(q)$, where
  $\rho_\alpha = \sigma_\alpha^{-1}$. The trial space becomes
  $\Ua = L_2(\oh)^d \times L_2(\oh) \times L_2(\oh)^d
  \times \Hoh  \times H^{-1/2}(\partial\oh)$ with variables
  $(q, u, z, \hat u, \hat q_n)$, and the closure operator is
  \[
    c_\alpha(\clu; \tilde\clu)
    := \kappa_1^{-2}(z - \rho_\alpha(q), \tilde z)_{L_2}
    + \kappa_1^{-2}\kappa_2(z - \rho_\alpha(q), \tilde q)_{L_2}.
  \]
  The bilinear form $b_\alpha$ remains unchanged. Both formulations
  share the same discrete subspaces~\eqref{eq:mdpg-discrete} and yield
  identical variational correctness constants (see
  Remark~\ref{rem:mdpg-rho-verification}).
\end{remark}

To understand the computability of the $\Ca$ term in the MDPG loss
function, observe that \eqref{eq:c-alpha} implies 
\begin{equation}
  \label{eq:C-mdpg'}
  \langle \Ca(\clu), \tilde \clu \rangle
  = \kappa_1^{-2}
  \big( q - \sigma_\alpha(p), \; \qt + \kappa_2 \pt \big)_{L_2}.
\end{equation}
Hence maximizing~\eqref{eq:C-mdpg'} separately in $\pt$ and
$\qt$ using  the weighted norm~\eqref{eq:mdpg-norms} gives
the closed-form expression
\begin{equation}
  \label{eq:C-dualnorm-closedform}
  \| \Ca(\clu) \|_{\Uap}
  = \kappa_1^{-2} \big( \kappa_1^2+  \kappa_2^2\big)^{1/2}
  \, \| q - \sigma_\alpha(p) \|_{L_2},
\end{equation}
which, being an $L_2$-norm, is computable.  Thus the canonical choice
$\Ma{\clz} = \| \Ca(\clz) \|_{\Uap}$ made by the MDPG loss function
in~\S\ref{ssec:mdpg-abstract} is computable in this setting.

\subsection{The MFOSLS loss function}
\label{sec:fosls-loss-function}

The first-order reformulation~\eqref{eq:11} can alternately be kept in
strong form, leading to a realization of the MFOSLS instance mentioned 
earlier (in~\S\ref{ssec:mfosls-abstract}).  Set
\begin{subequations}
  \label{eq:mfosls-setting}
\begin{equation}
  \label{eq:mfosls-spaces}
  \Ua := \Hdiv \times \Ho^1(\om) \times L_2(\om)^d,
  \qquad
  \Va := L_2(\om) \times L_2(\om)^d,
\end{equation}
and for $\clz = (q, u, p) \in \Ua$ and $\clv = (w, r) \in \Va$, let
\begin{equation}
  \label{eq:mfosls-b}
  \ba(\clz, \clv) := (\dive q, w)_{L_2} + (p - \nabla u, r)_{L_2},
  \qquad
  \ell(\clv) := (f, w)_{L_2},
\end{equation}
i.e., $\Ba$ now equals the operator $B$ of~\eqref{eq:11}.
The nonlinear closure $\Ca$ is generated by the
same formula~\eqref{eq:c-alpha} used for MDPG (whose right hand side
only involves the $q$- and $p$-components of the trial tuples).
Mirroring the weighted norms~\eqref{eq:mdpg-norms}, we set
\begin{equation}
  \label{eq:mfosls-norms}
  \begin{aligned}
    \| \clz \|_{\Ua}^2
    & := \| u \|_{L_2(\om)}^2 +  \| \nabla u \|^2_{L_2}
      + \| p \|_{L_2}^2
      + \kappa_1^{-2}
      \left( \| q \|_{L_2}^2 + \| \dive q \|^2_{L_2} \right),
    \\
    \| \clv \|_{\Va}^2
    & := \kappa_1^2 \, \| w \|_{L_2}^2 + \| r \|^2_{L_2},
  \end{aligned}
\end{equation}
\end{subequations}
with the same constants $\kappa_1, \kappa_2 > 0$
entering~\eqref{eq:c-alpha}.  No mesh, broken space, or interface
variable is needed.  Since $\Va$ is an $L_2$ product identified with
its own dual,
\begin{equation}
  \label{eq:mfosls-first-term}
  \| \ell - \Ba \clz \|_{\Vap}^2
  = \kappa_1^{-2} \| f - \dive q \|_{L_2}^2
  + \| p - \nabla u \|_{L_2}^2
\end{equation}
is exactly computable.  For the closure term, we choose
\begin{equation}
  \label{eq:mfosls-N}
  \Ma{\clz}
  := \kappa_1^{-2} \big( \kappa_1^2 + \kappa_2^2 \big)^{1/2}
  \, \| q - \sigma_\alpha(p) \|_{L_2},
\end{equation}
i.e., the same closed-form expression that the dual norm produced in
the MDPG setting---see~\eqref{eq:C-dualnorm-closedform}.  Note that
$\| \Ca(\clz) \|_{\Uap}$ need not equal~\eqref{eq:mfosls-N} here: the
trial norm in~\eqref{eq:mfosls-norms} is stronger than the
corresponding weighted $L_2$ product norm, so \eqref{eq:mfosls-N} is
in general only an upper bound for the dual norm, as seen in the
verification of Assumption~\ref{asm:N} for the
choice~\eqref{eq:mfosls-N} in Proposition~\ref{prop:mfosls-verif}
below.  With $\Vah = \Va$ and $\fpb = \mathrm{id}$, the MFOSLS loss
function, generated per Definition~\ref{defn:mdpg}, reads
\begin{equation}
  \label{eq:mfosls-loss}
  \cl L(\alpha, \clz)
  = \kappa \left( \kappa_1^{-2} \| f - \dive q \|_{L_2}^2
    + \| p - \nabla u \|_{L_2}^2 \right)
  + \kappa_1^{-4} \big( \kappa_1^2 + \kappa_2^2 \big)
  \, \| q - \sigma_\alpha(p) \|_{L_2}^2,
\end{equation}
for $\clz = (q, u, p) \in \Ua$.  Every term is a plain $L_2$ integral.
The difference, compared to MDPG, is conformity: trial functions must
satisfy $q \in \Hdiv$ and $u \in \Ho^1(\om)$. Discrete trial spaces
such as
\begin{equation}
  \label{eq:mfosls-discrete}
  \Uah = \RT_k \times U_{k+1} \times P_k^d,
\end{equation}
where $\RT_k$ and $U_{k+1}$ are as defined before~\eqref{eq:mdpg-discrete},
provide conforming choices,
but their choice is immaterial for the reliability and efficiency,
since MFOSLS variational correctness holds throughout~$\Ua$.

\section{Verification of the assumptions for a class of examples}
\label{sec:verification}

\subsection{A model class of nonlinear diffusive fluxes}
\label{ssec:model}

In this section we verify the assumptions of
Theorem~\ref{thm:gen-nonlinear-dpg} for the MDPG and MFOSLS
formulations of \S\ref{sec:mdpg-loss-function-diffus}
and~\S\ref{sec:fosls-loss-function} with a specific
nonlinearity obtained by setting 
\begin{subequations}
  \label{eq:phi-asm}
\begin{equation}
  \label{eq:10}
  \phi_\alpha := \alpha^{-1}\phi
\end{equation}
 where  $\phi$ is some positive function satisfying for some constants $0 < \beta_1 < \beta_2 < \infty$
\begin{equation}
  \label{eq:phi-props}
  0 < \beta_1 \leq \phi(t) \leq \beta_2,
  \quad \beta_1 \leq \phi(t) + t \phi'(t) \leq \beta_2, \qquad
  \text{ for all } t>0, \qquad \lim_{t \to 0^+} t\phi'(t) = 0.
\end{equation}
In~\eqref{eq:10}, $\alpha$ is from a collection $\cl Q$ of
bounded positive functions on $\om$
(representing some material coefficients) and we assume that there are some fixed
constants $\alpha_1, \alpha_2$ such that
\begin{equation}
  \label{eq:15}
  \alpha_1 \le \alpha(x) \le \alpha_2, 
\end{equation}
\end{subequations}
for a.e.~$x \in \om$ for all $\alpha \in \cl Q$. This setting of~\eqref{eq:phi-asm} then fixes
the energy by \eqref{eq:min-with-varphi}, the nonlinear flux
$\sigma_\alpha(q) = -\phi_\alpha( |q|) q$ as in \eqref{eq:5}, and the parametric boundary value problem by~\eqref{eq:4}.

\begin{example}[Typical profile]
  \label{ex:canonical}
  Set $\phi(t) = 2 + (1 + t)^{-2}$ for $t\ge 0$.  Then, since
  $\phi'(t) = -2(1+t)^{-3}$, we have $2 < \phi(t) \le 3$ and
  \[
    \phi(t) + t \phi'(t)
    = 2 + \frac{1+t-2t}{(1+t)^3} = 2 + \frac{1-t}{(1+t)^3}.
  \]
  The function $t \mapsto (1-t)(1+t)^{-3}$ has derivative
  $(2t-4)(1+t)^{-4}$, hence attains its minimum $-1/27$ at $t=2$ and
  its maximum $1$ at $t = 0$.  Therefore
  $2 - 1/27 \le \phi(t) + t\phi'(t) \le 3$; as $t\phi'(t) \to 0$
  when $t \to 0^+$, \eqref{eq:phi-props}
  certainly holds with $\beta_1 = 1$, $\beta_2 = 3$.  The form of
  $\phi$ is found in \cite[Example~3.2(a)]{carstensen2017} as well as
  in~\cite{CH}.  The numerical experiments of~\cite{CH} use the
  variant $\phi(t) = 2 - (1+t)^{-2}$ for which a similar analysis
  holds.
\end{example}

\begin{example}[Invertible profile]
  \label{ex:invertible}
  Set $\phi(t) = 2 + (1+t)^{-1}$ for $t \ge 0$. Then $\phi'(t) = -(1+t)^{-2}$,
  so $2 < \phi(t) \le 3$ and
  \[
    \phi(t) + t\phi'(t)
    = 2 + \frac{1}{1+t} - \frac{t}{(1+t)^2}
    = 2 + \frac{1+t-t}{(1+t)^2}
    = 2 + (1+t)^{-2}.
  \]
  Hence $2 < \phi(t) + t\phi'(t) \le 3$, and since $t\phi'(t) \to 0$
  when $t \to 0^+$, \eqref{eq:phi-props} holds with
  $\beta_1 = 2$ and $\beta_2 = 3$.

  This profile has the additional property that $\sigma_\alpha$ is explicitly
  invertible. To see this, observe that the monotone function
  $m(t) := t\phi(t) = t\left(2 + \frac{1}{1+t}\right) = 2t + \frac{t}{1+t}$
  can be rewritten as
  \[
    m(t) = 2t + 1 - \frac{1}{1+t},
  \]
  from which $m(t) = s$ gives $(1+t)^{-1} = 2t + 1 - s$, or equivalently
  \[
    2t^2 + (3-s)t - s = 0.
  \]
  Solving this quadratic equation yields
  \[
    m^{-1}(s)
    = \frac{s-3+\sqrt{(3-s)^2+8s}}{4}
    = \frac{s-3+\sqrt{s^2+2s+9}}{4}
  \]
  for $s \ge 0$ (taking the positive root since $t \ge 0$). One can verify
  that $m(m^{-1}(s)) = s$ by direct substitution. To summarize, 
  if $p = \sigma_\alpha(q)$ then $|p| = \alpha^{-1}m(|q|)$,
  so $|q| = m^{-1}(\alpha|p|)$ and the direction reversal gives  the
  explicit inverse
  \begin{equation}
    \label{eq:rho-explicit}
    \rho_\alpha(p) = \sigma_\alpha^{-1}(p)
    = -\frac{m^{-1}(\alpha|p|)}{|p|}\, p
    = -\frac{\alpha|p| - 3 + \sqrt{\alpha^2|p|^2 + 2\alpha|p| + 9}}{4|p|}\, p.
  \end{equation}
  The explicit inverse enables the alternative MDPG formulation of
  Remark~\ref{rem:mdpg-rho-formulation} for this profile.
\end{example}

\begin{remark}
  For the typical profile of Example~\ref{ex:canonical} (with
  $\phi(t) = 2 + (1+t)^{-2}$), it is not as easy as
  Example~\ref{ex:invertible} to obtain the inverse of the nonlinear flux.
  It would require  solving a cubic equation: in contrast, the invertible
  profile of Example~\ref{ex:invertible} reduces to a quadratic, enabling
  explicit inversion at the cost of slightly weaker regularity
  ($\phi'(t) \sim (1+t)^{-2}$ vs.\ $(1+t)^{-3}$ for the typical case).
  Both profiles satisfy~\eqref{eq:phi-props} with comparable constants.
\end{remark}

We
start with the following lemma, proved using arguments
from~\cite{carstensen2017}. In particular,~\eqref{eq:sigma-mono-lip}
below shows the monotone form of the nonlinearity in $-\sigma_\alpha$
which  will enable us to establish the strong-monotonicity
estimate~\eqref{eq:S4-mono}.

\begin{lemma}
  \label{lem:phi}
  In the setting of~\eqref{eq:phi-asm}, the nonlinear flux function
  $\sigma_\alpha$ of~\eqref{eq:5} which maps
  $q \mapsto -\phi_\alpha(|q|)q$ is $C^1$ on $\bb R^d$, its Jacobian matrix 
  at any $0 \ne q \in \bb R^d$ takes the form 
  \begin{equation}
    \label{eq:Dsigma}
    d\sigma_\alpha(q)
    = -\alpha^{-1} \left( \phi(|q|)\, I
      + \phi'(|q|)\, |q| \; \frac{q}{|q|} \otimes \frac{q}{|q|}\right),
  \end{equation}
  which is extendable continuously to all $q \in \bb R^d$  including $q = 0$ and 
  has its spectrum
  contained in the real interval  $[-\alpha^{-1}\beta_2, -\alpha^{-1}\beta_1]$ for every
  $q \in \bb R^d$.
  Moreover, for all $q, r \in \bb R^d$,
  \begin{equation}
    \label{eq:sigma-mono-lip}
    \begin{gathered}
      \alpha^{-1}\beta_1 |q - r|^2
      \;\le\; -\big(\sigma_\alpha(q) - \sigma_\alpha(r)\big)\cdot(q-r)
      \;\le\;\alpha^{-1}\beta_2 |q-r|^2,
      \\
      \alpha^{-1}\beta_1 |q-r|
      \;\le\; |\sigma_\alpha(q) - \sigma_\alpha(r)|
      \;\le\; \alpha^{-1}\beta_2 |q - r|.
    \end{gathered}
  \end{equation}
  Consequently, the inverse $\rho_\alpha = \sigma_\alpha^{-1}$ satisfies
  \begin{equation}
    \label{eq:rho-bilip}
    \alpha\beta_2^{-1}|p - s|
    \;\le\; |\rho_\alpha(p) - \rho_\alpha(s)|
    \;\le\; \alpha\beta_1^{-1}|p - s|
    \qquad \text{for all } p, s \in \bb R^d.
  \end{equation}
\end{lemma}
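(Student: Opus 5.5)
The plan is to compute the Jacobian directly, read off its spectrum, and then integrate along segments. For $q \ne 0$ the map $q \mapsto |q|$ is smooth with gradient $q/|q|$. The chain rule applied to $\sigma_\alpha(q) = -\alpha^{-1}\phi(|q|)q$ then gives
\[
  d\sigma_\alpha(q) = -\alpha^{-1}\bigl(\phi(|q|) I + \phi'(|q|)\, q \otimes q/|q|\bigr),
\]
which is \eqref{eq:Dsigma}. The matrix in parentheses is symmetric. Its eigenvalues are $\phi(|q|)$, with multiplicity $d-1$ on $q^\perp$, and $\phi(|q|) + |q|\phi'(|q|)$ on $\mathrm{span}\{q\}$. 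By \eqref{eq:phi-props} both lie in $[\beta_1,\beta_2]$, so the spectrum of $d\sigma_\alpha(q)$ lies in $[-\alpha^{-1}\beta_2, -\alpha^{-1}\beta_1]$.

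The main obstacle is continuity at $q = 0$. Write $t = |q|$. The rank-one term has norm $|t\phi'(t)|$, which tends to $0$ by the last condition in \eqref{eq:phi-props}. Since $\phi$ is bounded, I also need $\phi(0^+)$ to exist. For this I would use $\phi'(t) = (t\phi'(t))/t$ together with the fact that $t\phi(t) = \int_0^t (\phi + s\phi')\,ds$ stays bounded, but the cleaner route may be as follows. The function $m(t) = t\phi(t)$ has $m' = \phi + t\phi' \in [\beta_1,\beta_2]$, so $m$ is Lipschitz and $\lim_{t\to0} m(t)/t = m'(0^+)$ once $m'$ has a limit. Now $m' - \phi = t\phi' \to 0$, and since $\phi$ is bounded with $m(0^+) = 0$, L'Hôpital-type reasoning shows that $\phi(0^+) = \lim m(t)/t$ exists exactly when $\lim m'(t)$ exists. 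I would settle this by showing that $\phi$ is Cauchy near $0$: $|\phi(t_1) - \phi(t_2)| \le \int_{t_1}^{t_2} |s\phi'(s)|\,ds/s$. If that estimate is too weak, I would fall back on treating $\phi(0) := \lim\phi$ as implicit in the $C^2$ setting of \cite{carstensen2017}. Granting the limit $\phi(0^+) =: \phi_0$, I then check that $\sigma_\alpha$ is differentiable at $0$ with $d\sigma_\alpha(0) = -\alpha^{-1}\phi_0 I$. Indeed, $|\sigma_\alpha(q) + \alpha^{-1}\phi_0 q| = \alpha^{-1}|\phi(|q|) - \phi_0|\,|q| = o(|q|)$. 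So $\sigma_\alpha \in C^1(\bb R^d)$, and its spectrum bound passes to the limit.

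For \eqref{eq:sigma-mono-lip}, write
\[
  \sigma_\alpha(q) - \sigma_\alpha(r) = M(q-r), \qquad M = \int_0^1 d\sigma_\alpha\bigl(r + s(q-r)\bigr)\,ds.
\]
This is valid since $\sigma_\alpha$ is $C^1$ everywhere, so it holds even if the segment passes through $0$. $M$ is an average of symmetric matrices with spectra in $[-\alpha^{-1}\beta_2, -\alpha^{-1}\beta_1]$. It is therefore symmetric with spectrum in the same interval, since that set is convex in the Loewner order. The quadratic form bounds for $-M(q-r)\cdot(q-r)$ give the first line. The norm bounds follow because a symmetric $-M$ satisfies $\alpha^{-1}\beta_1 I \le -M \le \alpha^{-1}\beta_2 I$, so $\alpha^{-1}\beta_1|x| \le |Mx| \le \alpha^{-1}\beta_2|x|$. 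This is the second line.

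Finally, for \eqref{eq:rho-bilip}, the lower bound in the second line shows that $\sigma_\alpha$ is injective. Strong monotonicity of $-\sigma_\alpha$ together with continuity gives surjectivity by the Minty--Browder theorem, or by the global inverse function theorem via properness. So $\rho_\alpha$ is defined on all of $\bb R^d$. Substituting $q = \rho_\alpha(p)$ and $r = \rho_\alpha(s)$ into the two-sided bound $\alpha^{-1}\beta_1|q-r| \le |p-s| \le \alpha^{-1}\beta_2|q-r|$ and rearranging yields \eqref{eq:rho-bilip}.
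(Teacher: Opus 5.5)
Your argument follows the paper's proof almost step for step: the chain rule for \eqref{eq:Dsigma}, the eigenvalues $\phi(|q|)$ on $q^\perp$ and $\phi(|q|)+|q|\phi'(|q|)$ along $q$, the fundamental theorem of calculus along the segment from $r$ to $q$, and the substitution $q=\rho_\alpha(p)$, $r=\rho_\alpha(s)$. Two small differences are harmless. You get the lower Lipschitz bound from the spectrum of the averaged symmetric Jacobian $M$, where the paper uses strong monotonicity plus Cauchy--Schwarz. You also justify that $\rho_\alpha$ is defined on all of $\mathbb{R}^d$ via Minty--Browder, whereas the paper just takes this from the invertibility assumed in \eqref{eq:5}.

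\textbf{The gap.} Your attempt to derive the existence of $\phi(0^+)$ from \eqref{eq:phi-props} would fail. The bound $|\phi(t_1)-\phi(t_2)|\le\int_{t_1}^{t_2}|s\phi'(s)|\,ds/s$ only controls an integral of $o(1)/s$, which need not be small. In fact no argument can succeed. The function $\phi(t)=2+\tfrac12\sin\bigl(\log\log(e^{e}+1/t)\bigr)$ is smooth on $(0,\infty)$ and satisfies \eqref{eq:phi-props} with $\beta_1=1$, $\beta_2=3$; one checks $|t\phi'(t)|\le 1/(2e)$ and $t\phi'(t)\to 0$. Yet it has no limit at $0$. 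Then $\sigma_\alpha(te)/t=-\alpha^{-1}\phi(t)e$ has no limit as $t\to0^+$, so $\sigma_\alpha$ is not even differentiable at the origin.

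\textbf{The fix.} The $C^1$ claim at $q=0$ needs continuity of $\phi$ at $0$ as an additional hypothesis. It holds in Examples~\ref{ex:canonical} and~\ref{ex:invertible}, where $\phi$ is defined on $[0,\infty)$. The $C^2(0,\infty)$ requirement does not supply it. The paper's proof uses it tacitly when it infers continuity of $d\sigma_\alpha$ from $t\phi'(t)\to0$ alone. So your fallback of granting $\phi_0$ is the right move, but state it as an explicit assumption rather than something implied by the setting.

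\textbf{What survives without it.} The inequalities \eqref{eq:sigma-mono-lip} and \eqref{eq:rho-bilip} do not depend on this hypothesis. $\sigma_\alpha$ is continuous at $0$ because $\phi$ is bounded. Hence $s\mapsto\sigma_\alpha(r+s(q-r))$ is Lipschitz, with derivative $d\sigma_\alpha(r+s(q-r))(q-r)$ for all but at most one $s$. Your averaged-matrix argument therefore goes through with the integrand defined almost everywhere. Likewise, Minty--Browder needs only continuity and strong monotonicity, so it is the robust choice for surjectivity. The global inverse function theorem alternative you mention would need $C^1$ at the origin.
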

\begin{proof}  
  The derivative in~\eqref{eq:Dsigma} is computed easily by chain and
  product rules.  Since $\phi'(|q|)|q|$ vanishes as $q \to 0$
  by~\eqref{eq:phi-props},
  $d\sigma_\alpha$ is continuous on all of $\bb R^d$.
  The symmetric matrix in~\eqref{eq:Dsigma}
  equals $-\alpha^{-1}$ times a matrix with eigenvalue
  $\phi(|q|) + |q| \phi'(|q|)$ in the direction of $q$ and eigenvalue
  $\phi(|q|)$ on the orthogonal complement, both of which lie in
  $[\beta_1,\beta_2]$ by~\eqref{eq:phi-props},  so 
  $d\sigma_\alpha(q)$ has spectrum contained in
  $[-\alpha^{-1}\beta_2, -\alpha^{-1}\beta_1]$ as claimed.  The
  bound~\eqref{eq:sigma-mono-lip} follows from this and the
  fundamental theorem of calculus along the segment from $r$ to $q$,
  since
  \[
    \big( \sigma_\alpha(q) - \sigma_\alpha(r) \big) \cdot (q - r)
    = \int_0^1 (q-r) \cdot d\sigma_\alpha(r + s(q-r))\, (q-r)\, ds
  \]
  lies in $[-\alpha^{-1}\beta_2 |q-r|^2, -\alpha^{-1}\beta_1|q-r|^2]$.
  The upper Lipschitz bound $|\sigma_\alpha(q)-\sigma_\alpha(r)| \le
  \alpha^{-1}\beta_2|q-r|$ follows since $\|d\sigma_\alpha(\cdot)\|
  \le \alpha^{-1}\beta_2$ everywhere
  (cf.\ \cite[Remark~3.1 and~(4.5)]{carstensen2017}).
  The lower Lipschitz bound follows by combining the strong monotonicity
  inequality with the Cauchy--Schwarz inequality
  $-(\sigma_\alpha(q)-\sigma_\alpha(r))\cdot(q-r) \le
  |\sigma_\alpha(q)-\sigma_\alpha(r)|\,|q-r|$ and dividing by $|q-r|$.
  The bounds~\eqref{eq:rho-bilip} on $\rho_\alpha$ follow
  from~\eqref{eq:sigma-mono-lip}
  upon setting $q=\rho_\alpha(p)$ and $r=\rho_\alpha(s)$ and 
  then rearranging.
\end{proof}

Applying Lemma~\ref{lem:phi} with $\alpha \equiv \alpha(x)$ and
using~\eqref{eq:15}, we find that, for a.e.~$x \in \om$, the flux
$\sigma_\alpha(x, q) = -\alpha^{-1}(x) \phi(| q|)\, q$ satisfies
\begin{gather}
  \label{eq:globalized-sigma}
  \frac{\beta_1}{\alpha_2} |q-r|^2
  \le -\big(\sigma_\alpha(x,q) - \sigma_\alpha(x,r)\big)\cdot(q-r)
  \le \frac{\beta_2}{\alpha_1} |q-r|^2,
  \\
  \frac{\beta_1}{\alpha_2}|q-r|
  \le |\sigma_\alpha(x,q) - \sigma_\alpha(x,r)| \le \frac{\beta_2}{\alpha_1} |q-r|,
  \\
  \label{eq:globalized-rho}
  \frac{\alpha_1}{\beta_2}|p-s|
  \le |\rho_\alpha(x,p) - \rho_\alpha(x,s)| \le \frac{\alpha_2}{\beta_1}|p-s|.
\end{gather}
With this in view,  we set 
\begin{subequations}
  \label{eq:kappas-mu}
\begin{equation}
  \label{eq:kappa-alpha}
  \kappa_1 = \left( \frac{\beta_2}{\alpha_1} \right)^2,
  \qquad
  \kappa_2 = \mu \, \frac{\beta_1}{\alpha_2},
\end{equation}
for some fixed $\mu$ satisfying
\begin{equation}
  \label{eq:20}
  \mu > \left( \frac{\beta_2 \alpha_2}{\beta_1 \alpha_1} \right)^2.
\end{equation}
\end{subequations}
The weighted norms in~\eqref{eq:mdpg-norms} and the definition of $\Ca$ in
\eqref{eq:c-alpha} for the present
class of examples use  these $\kappa_1,\kappa_2$.
By~\eqref{eq:phi-props} $\beta_1 \le \beta_2$ and by~\eqref{eq:15} $\alpha_1 \le \alpha_2$. Hence the quantity on the right hand side of~\eqref{eq:20} is greater than or equal to~$1$, so that $\mu >1$.

\subsection{Variational correctness of the MDPG loss function}
\label{ssec:mdpg-verif}

To prove the efficacy of the MDPG loss function generated per
Definition~\ref{defn:mdpg},
we now verify Assumption~\ref{asm:MDPG} for the operators $\Ba$ and
$\Ca$ generated by $b_\alpha$ and $c_\alpha$
of~\eqref{eq:B-C-example}. This verification proceeds by arguments
that are in~\cite{CH}, but we include a brief sketch for readability.
We will use the existence of a mesh-independent constant $c_P>0$ such
that for all $w \in \Ho^1(\om)$ and $v \in H^1(\oh)$,
\begin{subequations}
  \label{eq:Friedrichs2}
\begin{gather}
  \label{eq:Friedrichs-std}
  c_P \| w \|_{L_2}^2
  \le  \|\nabla w \|_{L_2}^2,
  \\
  \label{eq:Friedrichs-generalized}
  c_P \| v \|_{L_2}^2
  \le
  \sup_{q \in \Hdiv} \left(
    \frac{ \ip{ q\cdot n, v}_{\partial \oh}}
    { \| q \|_{\Hdiv}} \right)^2 
   + \sum_{K\in \oh} \|\nabla v \|_{L_2(K)}^2.
 \end{gather}
\end{subequations}
The first is the standard Friedrichs inequality while the second is an
analogue for broken functions proved
in~\cite[Lemma~4.1]{DemkoGopal11a}.  Next, we need the
simple observation  that the kernel of $\Ba$ is parameterized by
$E(q,u) := (q, u, \nabla u, \tr(u), \trn (q))$ as
\begin{equation}
  \label{eq:E-map-diffus}
  \ker(\Ba) = \{ E(q,u) : u \in \Ho^1(\om), \;
  q \in \Hdzero \},
\end{equation}
where $\Hdzero := \{ q \in \Hdiv : \dive q = 0\}$.  Indeed, by
\eqref{eq:b-alpha}, $\clz \in \ker(\Ba)$ if and only if
$b_\alpha((q,u,p,\hat u,\hat q_n), (r, w))=0$ for all $(r,w)\in\Va$.  
Taking $\clv=(0,w)$, forces $\dive q = 0$ element-wise and
$\hat q_n = \trn q$ to be single-valued on interfaces, while taking
$\clv=(r,0)$ forces $p=\nabla u$ and $\hat u = \tr(u)$ and 
$u\in\Ho^1(\om)$. (Consult \cite[Lemma~4.4]{CH} if more details are
needed.)  By~\eqref{eq:Friedrichs-std} and the quotient norm
definition~\eqref{eq:quotient-norms}, it is then immediate that
\begin{equation}
  \label{eq:cE-diffus}
  c_E \, \| E(q,u) \|^2_{\Ua}
  \le \| \nabla u \|^2_{L_2(\om)} + \kappa_1^{-2}\| q \|^2_{L_2(\om)},
  \qquad \text{for all } u \in \Ho^1(\om), \; q \in \Hdzero,
\end{equation}
where $c_E^{-1} = 2 + 2 c_P^{-1}$.

\begin{proposition}[Verification of Assumption~\ref{asm:MDPG}]
  \label{prop:mdpg-verif}
  In the setting of \eqref{eq:B-C-example} and~\eqref{eq:phi-asm}, set 
  $\kappa_1,\kappa_2, \mu$ as in~\eqref{eq:kappas-mu}.
  Then all conditions of Assumption~\ref{asm:MDPG} hold for $\Ba$ and
  $\Ca$ in \eqref{eq:B-C-example}.  Specifically, 
  \begin{enumerate}[label={\rm(\roman*)}]
  \item $\Ba: \Ua \to \Va'$ is surjective for each $\alpha\in\cl Q$, i.e.,    \eqref{eq:S4-B-surjective} holds;
     moreover, there is a $c_b > 0$,
    depending only on the $c_P$ in \eqref{eq:Friedrichs2},  such that 
    $\| \Ba' \clv \|_{\Uap}^2 \ge c_b \| \clv \|^2_{\Va}$ for all
    $\clv \in \Va$; 
  \item the inequality~\eqref{eq:S4-infsup} holds with $c_B = 1/c_b$; 

  \item the inequality~\eqref{eq:S4-mono} holds with
    $c_U = c_E c_r/4$ and
    \[
      c_V = c_B \Big[c_r +
      \frac 1 2  (\kappa_2/\kappa_1)
      +
      4 (\kappa_2/\kappa_1)^2  c_E^{-1} c_r^{-1}\Big],
    \]
      where
      $    r = \mu/\kappa_2^2$ 
      and 
      $c_r= \big(
      1 + r - \sqrt{ (1 + r)^2 - 3 r } \big)/2;
      $
  \item the implication~\eqref{eq:C-zero} holds, and 
  \item the inequality~\eqref{eq:S4-Lip} holds with
    $c_{\rm Lip} = \kappa_1^{-2} \big[ (\kappa_1^2 + \kappa_2^2)
    (\kappa_1^2 + \kappa_1) \big]^{1/2}$.
    
  \end{enumerate}
  All the constants involved, $c_B, c_U, c_V,$ and $c_{\rm Lip}$, are
  independent of the mesh~$\oh$.
\end{proposition}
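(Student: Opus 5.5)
I will verify the five items in order, since (i) and (ii) feed into (iii).

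\emph{Items (i) and (ii).} For (i), I will establish the lower bound $\|\Ba'\clv\|_{\Uap}^2 \ge c_b\|\clv\|_{\Va}^2$ by the standard ultraweak DPG argument of \cite{DemkoGopal11a,CarstDemkoGopal14}, adapted to the weights in \eqref{eq:mdpg-norms}. Given $\clv=(r,w)$, the functional $\Ba'\clv$ acts on the field components $(q,u,p)$ through $-\nabla w$, $\div r$ and $r$, all broken and measured in $L_2$ with the weights $\kappa_1^{-2}$, $1$, $1$. It acts on the interface components $(\hat u,\hat q_n)$ through the jump pairings $\ip{r\cdot n,\hat u}_{\doh}$ and $\ip{\hat q_n,w}_{\doh}$. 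Taking suprema componentwise gives
\[
\|\Ba'\clv\|^2_{\Uap}= \kappa_1^2\|\nabla w\|_\oh^2+\|\div r\|^2_\oh+\|r\|^2_\oh+\text{(interface sups)}.
\]
The interface sup in $\hat q_n$ is exactly the jump quantity appearing in \eqref{eq:Friedrichs-generalized}, with weight $\kappa_1^2$. Applying \eqref{eq:Friedrichs-generalized} then controls $\kappa_1^2\|w\|^2_{L_2}$, which yields $c_b$ depending only on $c_P$, since the weight $\kappa_1^2$ appears consistently on both sides. Once this bound holds, $\Ba'$ is injective with closed range, so by the closed range theorem $\Ba$ is surjective.

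Item (ii) then follows from the same duality. Because $\ran\Ba'$ is closed and $\ker(\Ba)^\perp=\ran(\Ba')$ after Riesz identification, we have
\[
\|\clz-\PB\clz\|_\Ua=\sup_{\clv}\frac{\ba(\clz,\clv)}{\|\Ba'\clv\|_{\Uap}}\le c_b^{-1/2}\|\Ba\clz\|_{\Vap}.
\]
This gives $c_B=1/c_b$.

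\emph{Items (iv) and (v).} For (iv), suppose $\ip{\Ca(\clz),\clz_0}=0$ for all $\clz_0\in\ker\Ba$. Using \eqref{eq:E-map-diffus} with $\clz_0=E(0,u)$, $u\in\Ho^1$, we get $(q-\sigma_\alpha(p),\nabla u)=0$. Using $\clz_0=E(\tilde q,0)$ with $\tilde q\in\Hdzero$, we get $(q-\sigma_\alpha(p),\tilde q)=0$. The Helmholtz decomposition $L_2^d=\nabla\Ho^1\oplus\Hdzero$ then forces $q=\sigma_\alpha(p)$, so $\Ca(\clz)=0$ by \eqref{eq:C-mdpg'}. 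For (v), I combine the closed form \eqref{eq:C-dualnorm-closedform} applied to differences,
\[
\|\Ca(\clz_1)-\Ca(\clz_2)\|_{\Uap}=\kappa_1^{-2}(\kappa_1^2+\kappa_2^2)^{1/2}\,\|(q_1-q_2)-(\sigma_\alpha(p_1)-\sigma_\alpha(p_2))\|_{L_2},
\]
with the Lipschitz bound \eqref{eq:globalized-sigma}, whose constant is $\beta_2/\alpha_1=\kappa_1^{1/2}$. Cauchy--Schwarz against the weights $\kappa_1^{-2}\|q\|^2$ and $\|p\|^2$ gives the factor $(\kappa_1^2+\kappa_1)^{1/2}$.

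\emph{Item (iii), the main obstacle.} Write $\clz=\clz_1-\clz_2$ and split $\clz=\clz_0+\clz_\perp$, with $\clz_0=\PB\clz=E(\delta q,\delta u)$ and $\|\clz_\perp\|^2\le c_B\|\Ba\clz\|^2$. Also set $\delta\sigma=\sigma_\alpha(p_1)-\sigma_\alpha(p_2)$. Then
\[
\ip{\Ca(\clz_1)-\Ca(\clz_2),\clz}=\kappa_1^{-2}(\delta q-\delta\sigma,\delta q+\kappa_2\delta p).
\]
On the kernel part, where $\delta p=\nabla\delta u$, I expand this into four pieces:
\begin{itemize}
\item $\kappa_1^{-2}\|\delta q\|^2$;
\item $-\kappa_1^{-2}\kappa_2(\delta\sigma,\delta p)\ge\kappa_1^{-2}\kappa_2(\beta_1/\alpha_2)\|\delta p\|^2$;
\item the cross term $\kappa_1^{-2}\kappa_2(\delta q,\nabla\delta u)$, which vanishes since $\div\delta q=0$ and $\delta u\in\Ho^1$;
\item the remaining cross term $-\kappa_1^{-2}(\delta\sigma,\delta q)$, bounded by $\kappa_1^{-2}\kappa_1^{1/2}\|\delta p\|\|\delta q\|$.
\end{itemize}
This leaves a $2\times2$ quadratic form in $(\kappa_1^{-1}\|\delta q\|,\|\delta p\|)$. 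The choice of $\mu$ in \eqref{eq:20}, through $r=\mu/\kappa_2^2$, should make this form positive definite, and $c_r$ is evidently its smallest eigenvalue, $(1+r-\sqrt{(1+r)^2-3r})/2$. Combined with \eqref{eq:cE-diffus}, this gives a bound of the form $c_Ec_r\|\clz_0\|^2$.

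The hard part is handling the perturbation terms where $\clz_\perp$ enters, in particular $\delta p-\nabla\delta u$ and $\delta\sigma$ evaluated at non-kernel arguments. I will handle these with Young's inequality, using the factors $\tfrac12$ and $\tfrac14$, and together with $\|\clz\|^2\le2\|\clz_0\|^2+2\|\clz_\perp\|^2$ they yield $c_U=c_Ec_r/4$. The leftover terms are multiples of $\|\clz_\perp\|^2\le c_B\|\Ba\clz\|^2$, and the multiples $c_r$, $\tfrac12\kappa_2/\kappa_1$ and $4(\kappa_2/\kappa_1)^2c_E^{-1}c_r^{-1}$ produce $c_V$.

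Mesh independence holds throughout, since the only constants used are $c_P$, $c_E$ and the flux bounds.
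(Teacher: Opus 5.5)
Your items (i), (ii), (iv) and (v) are fine and match the paper's argument. The genuine gap is in (iii). You defer the hard step (``I will handle these with Young's inequality'') instead of proving it, and the route you sketch does not work as stated. Your notation uses $\delta q$ both for $q_1-q_2$ and for the $q$-component $q_0$ of $\PB(\clz_1-\clz_2)=E(q_0,u_0)$. The orthogonality $(q_0,\nabla u_0)_{L_2}=0$ holds only for the latter. Moreover, because $\sigma_\alpha$ is nonlinear, there is no ``kernel part'' of $\sigma_\alpha(p_1)-\sigma_\alpha(p_2)$ to perturb around.

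The missing idea is that the nonlinear terms need no splitting at all. Your first, second and fourth bullets hold verbatim for the full differences $q_{12},p_{12}$, since \eqref{eq:globalized-sigma} is pointwise in arbitrary arguments. Only the linear cross term $\kappa_1^{-2}\kappa_2(q_{12},p_{12})_{L_2}$ needs the kernel. Write it as $(q_0,p_0)+(q_{12}-q_0,p_0)+(q_0,p_{12}-p_0)+(q_{12}-q_0,p_{12}-p_0)$ with $(q_0,p_0)=(q_0,\nabla u_0)=0$. Bound the other three pieces by Young's inequality (weights $\kappa_1^{-1}$, $\veps_1$, $\veps_2$) in terms of $P_{12}:=\|\PB(\clz_1-\clz_2)\|_{\Ua}^2$ and $\cl B_{12}:=\|\Ba(\clz_1-\clz_2)\|_{\Vap}^2$, using (ii).

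You then still have to pass from the full-difference quantity $\|p_{12}\|_{L_2}^2+\kappa_1^{-2}\|q_{12}\|_{L_2}^2$ to $P_{12}$. This is done by \eqref{eq:cE-PBdelta}, $\tfrac{c_E}{2}P_{12}\le\|p_{12}\|_{L_2}^2+\kappa_1^{-2}\|q_{12}\|_{L_2}^2+c_B\cl B_{12}$, which follows from \eqref{eq:cE-diffus} and (ii). The factor-$2$ splitting used there bounds the kernel part by the full difference plus the non-kernel part, the reverse of the $\|\clz\|^2\le2\|\clz_0\|^2+2\|\clz_\perp\|^2$ you invoke.

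With $\veps_1=\tfrac14c_Ec_r\kappa_1^2/\kappa_2$ and $\veps_2=\tfrac14c_Ec_r/\kappa_2$, this gives exactly $c_U=c_Ec_r/4$. The three summands of $c_V$ come, respectively, from $c_r$ times \eqref{eq:cE-PBdelta}, from the term $(q_{12}-q_0,p_{12}-p_0)$, and from the two mixed terms. Your outline identifies none of these sources, so the constants you list are read off the statement rather than derived.

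Second, $c_r$ is not ``evidently'' the smallest eigenvalue of the form you obtain. In $y=(\kappa_1^{-1}\|q_{12}\|_{L_2},\|p_{12}\|_{L_2})$ the form is $y\cdot M(t)y$. Here $M(t)$ has diagonal entries $1$ and $r/t^2$, off-diagonal entries $-\tfrac12\sqrt{r/t}$, and $t=\mu\kappa_1/\kappa_2^2$; $c_r$ is the smallest eigenvalue of $M(1)$, not of $M(t)$. Positive definiteness holds for every $0<t<4$, so that is not the role of \eqref{eq:20}. Rather, \eqref{eq:20} gives $t<1$, and you then need that $\lambda_-(M(t))$ is decreasing in $t$ (Remark~\ref{rem:mon-detivative-ew}) to conclude $\lambda_-(M(t))\ge c_r$. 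This comparison is not automatic, because $M(t)-M(1)$ has a zero upper-left entry and a nonzero off-diagonal entry, hence is indefinite for $t<1$.
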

\begin{proof}
  Writing $\clz = (q,u,p,\hat u,\hat q_n)$ and
  $\clz_i=(q_i,u_i,p_i,\hat u_i,\hat q_{n,i})$ for generic elements
  of~$\Ua$, and $\clv=(r,w)$ for those in~$\Va$, we give quick
  pointers to proofs of every item.

  (i) By definition, $\Ba' \clv \in \Uap$ acts by
  $(\Ba' \clv)(\clz) = b_\alpha(\clz,\clv) = \ip{\hat q_n, w}_{\partial\oh}
  - \ip{r\cdot n, \hat u}_{\partial\oh} + (p,r)_\oh + (u,\div r)_\oh
  - (q,\nabla w)_\oh$. Thus,   taking suprema over individual components of $\clz$, 
  \[
    \sup_{0\ne \clz \in \Ua} \frac{b_\alpha( \clz,\clv)^2}{\|\clz\|_{\Ua}^2}
    \ge \kappa_1^2 \left( \|\nabla w\|^2_{L_2} + \sup_{0 \ne \hat q_n \in H^{-1/2}(\doh)}
    \frac{\ip{ \hat q_n, w}^2}{ \| \hat q_n \|_{H^{-1/2}(\doh)}^2} \right) 
  + \|r\|^2_{\Hdivoh}.
  \]
  By~\eqref{eq:Friedrichs-generalized}, we conclude 
  that  $\| \Ba' \clv \|_{\Uap}^2 \ge c_b \|\clv\|_\Va^2$ with
  $c_b = \min(c_P,1)/2$.  (Consult \cite[Lemma~4.2]{CH} if more details are
  needed.)  Accordingly,  $\Ba'$ is injective with closed range, so $\Ba$ is
  surjective onto $\Vap$, i.e.,~\eqref{eq:S4-B-surjective} holds.

  (ii) By inf-sup duality, the boundedness of $\Ba'$ from below in (i) 
  implies the same for $\Ba$ restricted to $\ker(\Ba)^\perp$, which
  contains elements of the form $\clz - \PB \clz$, i.e.,
  \begin{equation}
    \label{eq:31-alt}
    c_b\, \| \clz - \PB \clz \|_{\Ua}^2
    \le \| \Ba (\clz - \PB \clz) \|_{\Vap}^2 = \| \Ba \clz \|_{\Vap}^2,
  \end{equation}
  which is~\eqref{eq:S4-infsup} with $c_B = 1/c_b$.  (Consult
  \cite[Lemma~4.3]{CH} if more details are needed.)

  (iii) Write the first three components of $\clz_1-\clz_2$ as
  ${q_{12}} = q_1-q_2$, $u_{12} = u_1 - u_2$, ${p_{12}} = p_1-p_2$,
  and let ${\sigma_{12}} = \sigma_\alpha(p_1) - \sigma_\alpha(p_2)$.
  Using \eqref{eq:E-map-diffus}, write
  $\PB(\clz_1-\clz_2) = E(q_0,u_0)$ with $q_0\in\Hdzero$ and
  $u_0\in\Ho^1(\om)$. The difference of tuples 
  $t_0 = ( \kappa_1^{-1} q_0, \nabla u_0) = ( \kappa_1^{-1} q_0, p_0)$
  and $t_\perp = (\kappa_1^{-1}( q_0 - q_{12}), p_0 - p_{12})$, both in $L_2(\om)^d \times L_2(\om)^d$,  equals
  $t_0 - t_\perp = (\kappa_1^{-1} q_{12}, p_{12})$.
  By~\eqref{eq:cE-diffus},
  $ c_E\|\PB(\clz_1-\clz_2)\|^2_{\Ua} \le \| t_0 \|_{L_2}^2 \le 2 \|
  t_0 - t_\perp \|_{L_2}^2 + 2 \| t_\perp \|_{L_2}^2.$ Bounding
  $t_\perp$ by~\eqref{eq:31-alt} (which is the already proved~\eqref{eq:S4-infsup}), 
  \begin{equation}
    \label{eq:cE-PBdelta}
    \frac{c_E}{2}P_{12}%
    \;\le\; \|{p_{12}}\|^2_{L_2}
    + \kappa_1^{-2}\|{q_{12}}\|^2_{L_2}
    +  c_B \cl B_{12}. %
  \end{equation}
  where $P_{12} = \|\PB(\clz_1-\clz_2)\|^2_{\Ua}$ and
  $\cl B_{12} =
  \|\Ba(\clz_1-\clz_2)\|^2_{\Vap}$. (Equation~\eqref{eq:cE-PBdelta} is
  comparable to \cite[Equation~(24)]{CH} but we are unable to obtain
  their constants.)
  
  We need to connect the bound~\eqref{eq:cE-PBdelta} to
  $\cl C_{12} := \big\langle \Ca(\clz_1) - \Ca(\clz_2), \,\clz_1-\clz_2
  \big\rangle$. Expanding the
  right hand side of~\eqref{eq:C-mdpg'}, yields
  \begin{align}
    \nonumber
    \cl C_{12}
    &=
      \kappa_1^{-2}
      \Big[ \kappa_2 (-{\sigma_{12}})\cdot{p_{12}} + \|{q_{12}}\|^2_{L_2}
      + \kappa_2({q_{12}},{p_{12}})_{L_2} - ({\sigma_{12}},{q_{12}})_{L_2}
      \Big]
    \\ \label{eq:211}
    & \ge  \kappa_1^{-2}
      \left[    \frac{\kappa_2^2}{\mu}\|{p_{12}}\|^2_{L_2}
      + \|{q_{12}}\|^2_{L_2}
      + \kappa_2({q_{12}},{p_{12}})_{L_2} 
      -\sqrt{\kappa_1}\|{p_{12}}\|_{L_2}\|{q_{12}}\|_{L_2}      
      \right]
  \end{align}
  using~\eqref{eq:globalized-sigma} and the value of $\kappa_2$
  in~\eqref{eq:kappa-alpha}. Expressing the cross term as
  $({q_{12}},{p_{12}})_{L_2} = (q_0, p_0)_{L_2} + (q_{12} - q_0,
  p_0)_{L_2} + (q_0, p_{12} - p_0) + (q_{12} - q_0, p_{12} - p_0)$, 
  note that the first term satisfies
  $(q_0, p_0)_{L_2} = (q_0, \nabla u_0)_{L_2} = -(\div q_0, u_0) = 0$.
  The last term can be bounded by the inequality of arithmetic and
  geometric means with weight $\kappa_1^{-1}$,
  \begin{align*}
    \frac{\kappa_2}{\kappa_1^{2}}\,|(q_{12} - q_0, p_{12} - p_0)_{L_2}|
    &\le
      \frac{\kappa_2}{2\kappa_1}\!\Big(
      \kappa_1^{-2}\|q_{12}-q_0\|_{L_2}^2 + \|p_{12}-p_0\|_{L_2}^2
      \Big)
    \le
      \frac{c_B\kappa_2}{2\kappa_1}\,\cl B_{12}
  \end{align*}
  by virtue of~\eqref{eq:S4-infsup}. For the remaining terms, we use
  Young's inequality, 
  \begin{align*}
    \frac{\kappa_2}{\kappa_1^{2}}\,|(q_{12} - q_0, p_0)_{L_2}|
    &\le
      \frac{\kappa_2}{2\veps_1}\,\kappa_1^{-2}\|q_{12}-q_0\|_{L_2}^2
      + \frac{\kappa_2\veps_1}{2}\,\kappa_1^{-2}\|p_0\|_{L_2}^2
      \le
      \frac{c_B\kappa_2}{2\veps_1}\,\cl B_{12}
      + \frac{\kappa_2\veps_1}{2\kappa_1^2}\,P_{12},
  \end{align*}
  using \eqref{eq:S4-infsup}, where~$\veps_1$ is a free
  parameter. Similarly using a second free parameter~$\veps_2$,
  \begin{align*}
    \frac{\kappa_2}{\kappa_1^{2}}\,|(q_0, p_{12} - p_0)_{L_2}|
    &\le
      \frac{\kappa_2\veps_2}{2}\,P_{12} %
      + \frac{c_B\kappa_2}{2\veps_2\kappa_1^2}\, 
      \cl B_{12}. %
  \end{align*}
  Summing these cross-term 
  contributions and setting
  \[
    \tilde{c}_B(\veps_1,\veps_2) = \frac 1 2 {c_B\kappa_2}
    \!\left( \kappa_1^{-1} + \veps_1^{-1} + \veps_2^{-1}\kappa_1^{-2}\right),
  \]
  the estimate of~\eqref{eq:211} implies
  \begin{align}
    \label{eq:171}
    \cl C_{12}
    +
    \tilde{c}_B(\veps_1,\veps_2)\,
    \cl B_{12} %
    +
    \left(\frac{\veps_1\kappa_2}{2\kappa_1^2} + \frac{\kappa_2\veps_2}{2}\right)
    P_{12} %
    \ge  T,
  \end{align}
  where
  $T = \kappa_1^{-2} \big[(\kappa_2^2/\mu)\|{p_{12}}\|^2_{L_2} +
  \|{q_{12}}\|^2_{L_2}
  -\sqrt{\kappa_1}\|{p_{12}}\|_{L_2}\|{q_{12}}\|_{L_2}\big].$

  We can  view $T$ as a quadratic form of the matrix $M(t)$ defined below:
  \[
    r = \frac{\mu}{\kappa_2^2},
    \quad
    t = \frac{ \mu \kappa_1}{\kappa_2^2},
    \quad 
    M(t) =
    \begin{bmatrix}
      1     & -\frac 1 2 \sqrt{r/t}
      \\
      -\frac 1 2 \sqrt{r/t} & r/t^2     
    \end{bmatrix},
    \quad
    y =
    \begin{bmatrix}
      \kappa_1^{-1}\| q_{12} \|_{L_2} \\
      \phantom{\kappa_1^{-1}}\| p_{12} \|_{L_2}
    \end{bmatrix}.
  \]
  Clearly $T = y \cdot M(t) y$. By \eqref{eq:20},
  $0 < \kappa_1 < \kappa_2^2/\mu$, so $ 0 < t < 1$. The smallest
  eigenvalue of $M(t)$, namely
  $\lambda_-(t) = \frac 1 2 \big[ 1 + r/t^2 - \sqrt{ (1 + r/t^2)^2 -
    r (4-t)/t^2 } \big],$ is monotonically decreasing in
  $0 < t \le 1$ (see Remark~\ref{rem:mon-detivative-ew} below), so 
  $T = y \cdot M(t) y \ge \lambda_-(1) \,y\cdot y$. Thus, letting
  $c_r = \lambda_-(1),$ the estimate of \eqref{eq:171} implies
  \begin{align*}
    c_r\Big( \| p_{12}\|_{L_2}^2  +  \kappa_1^{-2} \| q_{12}\|_{L_2}^2\Big)
    \le T \le 
    \cl C_{12}
    + \tilde{c}_B(\veps_1,\veps_2)\,
    \cl B_{12} %
    +
    \left(\frac{\veps_1\kappa_2}{2\kappa_1^2} + \frac{\kappa_2\veps_2}{2}\right)
    P_{12}.
  \end{align*}
  Returning to \eqref{eq:cE-PBdelta}, this gives 
  \begin{align}
    \label{eq:17}
    \left(\frac{c_Ec_r}{2} - \frac{\veps_1\kappa_2}{2\kappa_1^2}
    - \frac{\kappa_2\veps_2}{2}\right)
    P_{12} %
    \;\le\;
    \cl C_{12}
    + \Big(c_Bc_r + \tilde{c}_B(\veps_1,\veps_2)\Big)
    \, \cl B_{12}.
  \end{align}
  Choosing $\veps_1 = \frac{1}{4} c_Ec_r\kappa_1^2/\kappa_2$ and
  $\veps_2 = \frac{1}{4}c_Ec_r / \kappa_2,$ the left hand side becomes
  $\frac 1 4 {c_Ec_r} P_{12}$ and we obtain~\eqref{eq:S4-mono}.  (A similar
  estimate with different constants is stated
  in~\cite[Lemma~4.6]{CH}.)

  (iv) Suppose $\clz=(q,u,p,\hat u,\hat q_n) \in \Ua$ satisfies
  $\langle \Ca(\clz), \clz_0 \rangle = 0$ for all
  $\clz_0 \in \ker(\Ba)$.  Set $g := q - \sigma_\alpha(p) \in
  L_2(\om)^d$.  Using~\eqref{eq:E-map-diffus}
  and~\eqref{eq:C-mdpg'}, for every $u_0 \in \Ho^1(\om)$ and
  $q_0 \in \Hdzero$,
  \[
    0 = \big\langle \Ca(\clz), E(q_0, u_0) \big\rangle
    = \kappa_1^{-2} \big( g, \; q_0 + \kappa_2 \nabla u_0 \big)_\oh.
  \]
  Choosing $q_0 = 0$ gives $g \perp \nabla \Ho^1(\om)$, and choosing
  $u_0 = 0$ gives $g \perp \Hdzero$.  Since
  $\nabla \Ho^1(\om) \oplus \Hdzero$ equals  all 
  $L_2(\om)^d$, by the Helmholtz decomposition,  we conclude that 
  $g = 0$, and hence
  $\Ca(\clz) = 0$ in $\Uap$, which is~\eqref{eq:C-zero}.

  (v) For $\clz_1,\clz_2 \in \Ua$ and any $\clz \in \Ua$, the
  Cauchy--Schwarz inequality and the global Lipschitz bound
  $\|\sigma_\alpha(p_1)-\sigma_\alpha(p_2)\|_{L_2} \le \sqrt{\kappa_1}
  \, \|p_1-p_2\|_{L_2}$ (that follows from~\eqref{eq:sigma-mono-lip}),
  give
  \[
    \big| \langle \Ca(\clz_1) - \Ca(\clz_2), \clz \rangle \big|
    = \kappa_1^{-2}
    \big| \big( {q_{12}}-{\sigma_{12}}, \, \kappa_2 p + q
    \big)_{L_2} \big|
    \le \kappa_1^{-2}
    \big( \sqrt{\kappa_1}\,\|{p_{12}}\|_{L_2} + \|{q_{12}}\|_{L_2} \big)
    \, \| \kappa_2 p + q \|_{L_2}.
  \]
  Both factors are now expressed in the weighted
  norm~\eqref{eq:mdpg-norms}. By the Cauchy--Schwarz inequality
  in~$\bb R^2$,
  \[
    \sqrt{\kappa_1}\,\|p_{12}\|_{L_2} + \|q_{12}\|_{L_2}
    = \sqrt{\kappa_1}\,\|p_{12}\|_{L_2} + \kappa_1\,\kappa_1^{-1}\|q_{12}\|_{L_2}
    \le \big(\kappa_1 + \kappa_1^2\big)^{1/2}
    \|\clz_1 - \clz_2\|_{\Ua},
  \]
  and similarly
  $\|\kappa_2 p + q\|_{L_2} \le \kappa_2\|p\|_{L_2} + \kappa_1\,\kappa_1^{-1}\|q\|_{L_2}
  \le (\kappa_1^2 + \kappa_2^2)^{1/2}\|\clz\|_{\Ua}$. Combining these
  bounds yields~\eqref{eq:S4-Lip} with the stated $c_{\rm Lip}$.
\end{proof}

\begin{remark}
  \label{rem:mon-detivative-ew}
  The tedium of verifying the monotonicity of $\lambda_-(t)$ mentioned
  in the above proof is alleviated  by substituting
  $\gamma = r/t^2$, expressing
  $ \lambda_-(\gamma) = \tfrac{1}{2}\bigl[(1+\gamma) -
  \sqrt{(1-\gamma)^2+\sqrt{r\gamma}}\bigr],$
  and differentiating with
  respect to $\gamma$. Then simplifications show that
  $d\lambda_-/ d\gamma >0$ when $r < \gamma$ (which is the case 
  as $t<1$).
\end{remark}

\begin{remark}
  The constant $c_r$ in Proposition~\ref{prop:mdpg-verif}, being the
  smallest eigenvalue of $M(1)$, satisfies
  \[
    \frac{3r}{4(r+1)} \;\le\; c_r \;\le\; \frac{1+r}{2}.
  \]  
  The upper bound is half the trace of $M(1)$ and the lower bound is
  the determinant-to-trace ratio of $M(1)$.
\end{remark}

\begin{remark}[Verification for the $\rho$-formulation]
  \label{rem:mdpg-rho-verification}
  For the alternative MDPG formulation described in
  Remark~\ref{rem:mdpg-rho-formulation}, the verification of
  Assumption~\ref{asm:MDPG} proceeds by an entirely analogous argument.
  Items (i) and (ii) are unchanged, as they concern only the operator
  $\Ba$, which is identical in both formulations. For items (iii)--(v),
  one uses the bi-Lipschitz bounds~\eqref{eq:rho-bilip} for $\rho_\alpha$
  in place of the monotonicity and Lipschitz bounds~\eqref{eq:sigma-mono-lip}
  for $\sigma_\alpha$. The proof structure remains the same: the closure
  difference becomes $\rho_\alpha(q_1) - \rho_\alpha(q_2)$ in place of
  $\sigma_\alpha(p_1) - \sigma_\alpha(p_2)$, and the monotonicity estimate
  in~\eqref{eq:211} uses the inner product
  $-(\rho_\alpha(q_1) - \rho_\alpha(q_2)) \cdot (q_1 - q_2)$ with the
  lower bound from~\eqref{eq:rho-bilip}. Since both $\sigma_\alpha$ and
  $\rho_\alpha$ satisfy bilipschitz bounds with the same constants
  (inverted) derived from $\beta_1, \beta_2, \alpha_1, \alpha_2$, the
  resulting constants $c_U, c_V, c_{\rm Lip}$ are identical for both
  formulations.
\end{remark}

\begin{proposition}[Verification of Assumption~\ref{asm:fortin-b}]
  \label{prop:fortin-b-verif}
  In the setting of \eqref{eq:B-C-example} and~\eqref{eq:phi-asm}---with the
  discrete spaces indexed by degree $k\ge 0$ as in
  \eqref{eq:mdpg-discrete}---there is a
  continuous linear operator $\fpb: \Va \to \Vah$
  satisfying~\eqref{eq:Fortin-b}, and
  \begin{equation}
    \label{eq:const-preserve-Pi}
    (v_0, \, w - w_\varPi)_{L_2} = 0
    \qquad \text{ for all } (r, w) \in \Va,\; v_0 \in P_0,
  \end{equation}
  where $w_\varPi$ denotes the second component of $\fpb(r,w)$.
  The norm $\|\fpb\|_{L(\Va)}$ is bounded 
  independently of the mesh $\oh$ and of $\alpha \in \cl Q$.
\end{proposition}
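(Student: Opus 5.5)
The plan is to build $\fpb$ element by element, reusing the Fortin operators of Gopalakrishnan--Qiu~\cite{GopalQiu14} (as refined in later work on simplicial Fortin operators for ultraweak DPG) for the two components of the broken test space. The key point is that $b_\alpha$ in~\eqref{eq:b-alpha} does not depend on $\alpha$, and it separates into a part tested by $r$ and a part tested by $w$. Writing $\fpb(r,w)=(\fpb^{\rm div} r, \fpb^{\rm grad} w)$, condition~\eqref{eq:Fortin-b} for all $\clw_h\in\Uah$ of~\eqref{eq:mdpg-discrete} reduces to two sets of moment conditions on each $K\in\oh$:
\[
  (r-\fpb^{\rm div}r,\, p_k)_K=0,\quad (\div(r-\fpb^{\rm div}r),\, u_k)_K=0,\quad \ip{(r-\fpb^{\rm div}r)\cdot n,\, \hat u}_{\partial K}=0,
\]
for $p_k\in P_k^d$, $u_k\in P_k$ and $\hat u\in \hat U_{k+1}|_{\partial K}$ (polynomials of degree $k+1$ on each face). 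Similarly,
\[
  (\nabla(w-\fpb^{\rm grad}w),\, q_k)_K=0,\quad \ip{\hat q_n,\, w-\fpb^{\rm grad}w}_{\partial K}=0,
\]
for $q_k\in P_k^d$ and $\hat q_n\in P_k$ on each face.

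Next I would invoke the local construction of~\cite{GopalQiu14}. For the $H(\div,K)$ part, I would take $\fpb^{\rm div}r\in P_{k+2}^d$ equal to the Raviart--Thomas-type interpolant completed by bubble corrections, so that face moments against $P_{k+1}$ and interior moments against $P_k^d$ are matched. The divergence moments against $P_k$ then follow by integration by parts from the face moments together with interior moments against $\nabla P_k\subset P_{k-1}^d$.

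For the $H^1(K)$ part, I would take $\fpb^{\rm grad}w\in P_{k+d}$ matching the face moments against $P_k$ and the interior moments against $P_{k-1}$. Integrating by parts then gives $(\nabla(w-\fpb^{\rm grad}w), q_k)_K = \ip{(w-\fpb^{\rm grad}w), q_k\cdot n}_{\partial K} - (w-\fpb^{\rm grad}w, \div q_k)_K = 0$. The degree $k+d$ is exactly what makes unisolvence of these moments possible via face and interior bubbles (the $d$-fold product of barycentric coordinates). In addition, I would require the interior moment against constants, $(w-\fpb^{\rm grad}w,1)_K=0$, which gives~\eqref{eq:const-preserve-Pi}. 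When $k\ge1$ this is already included among the $P_{k-1}$ moments. When $k=0$ it must be added, and I would check that $P_d$ still has enough room: it contains the interior bubble $\lambda_1\cdots\lambda_{d+1}$ only when the degree is at least $d+1$. So for $k=0$ I expect instead to use that the face bubbles have nonzero mean, and to correct the constant moment by a suitable combination that keeps the face moments fixed. This needs a small dimension count.

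For the norm bound, the local operators are defined by moments on a reference element and mapped by the affine (gradient) and Piola ($H(\div)$) transformations. Shape regularity and a scaling argument then give $\|\fpb^{\rm div}r\|_{H(\div,K)}\le C\|r\|_{H(\div,K)}$ and $\|\fpb^{\rm grad}w\|_{H^1(K)}\le C\|w\|_{H^1(K)}$, with $C$ independent of $h$. The $H^1$ bound needs a Poincaré-type argument for small elements, so that the $L_2$ part does not blow up. Since $\|\clv\|_{\Va}$ in~\eqref{eq:mdpg-norms} only weights the components by $\kappa_1^2$ and $1$, and $\fpb$ acts diagonally, $\|\fpb\|_{L(\Va)}\le \max(C_{\rm div},C_{\rm grad})$. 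This bound is independent of $\kappa_1$, and hence of $\alpha\in\cl Q$; in any case $\Ba$ does not depend on $\alpha$.

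I expect the main obstacle to be the unisolvence and the degree bookkeeping in the local moment problems, in particular fitting the extra constant-moment condition~\eqref{eq:const-preserve-Pi} into $P_{k+d}$ when $k=0$. There I would argue by an explicit dimension count together with a bubble-function positivity argument.
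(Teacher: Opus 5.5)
Your route is the paper's, whose proof is essentially a citation. It takes $\fpb(r,w)=(\varPi^{\mathrm{div}}_{k+2}r,\varPi^{\mathrm{grad}}_{k+d}w)$ from \cite{GopalQiu14,DPGacta}, notes that $b_\alpha$ is the ultraweak form of the decoupled system $\div q=f$, $p=\nabla u$ and carries no $\alpha$, and bounds $\|\fpb\|_{L(\Va)}$ componentwise. Your remark that the diagonal action makes the bound independent of the weight $\kappa_1$ even gives robustness with respect to the parameter range.

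There is one genuine gap: the $H(\div)$ component, as you describe it, cannot satisfy the norm bound. Face-by-face moments against $P_{k+1}(F)$ together with interior moments against $P_k^d$ are exactly the degrees of freedom of $\RT_{k+1}\subset P_{k+2}^d$, so no bubble corrections are even involved. No operator preserving these moments can be bounded on $H(\div,K)$. The normal trace of $r\in H(\div,K)$ lies only in $H^{-1/2}(\partial K)$, and a face-wise polynomial that jumps across the edges of $\partial K$ is not in $H^{1/2}(\partial K)$. Hence $r\mapsto\int_F r\cdot n\,\mu$ is not continuous on $H(\div,K)$, and your scaling argument has no bounded reference operator to transport.

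The repair is to impose only what \eqref{eq:Fortin-b} actually requires. The space $\hat U_{k+1}|_{\partial K}$ consists of traces of functions in $P_{k+1}(K)$, which are continuous on $\partial K$; your parenthetical ``polynomials of degree $k+1$ on each face'' enlarges it. For $\mu\in P_{k+1}(K)$,
\[
  \ip{(r-\varPi^{\mathrm{div}} r)\cdot n,\mu}_{\partial K}
  =(\div(r-\varPi^{\mathrm{div}} r),\mu)_K+(r-\varPi^{\mathrm{div}} r,\nabla\mu)_K,
  \qquad \nabla\mu\in P_k^d.
\]
So the correct local conditions are $(r-\varPi^{\mathrm{div}}r,p_k)_K=0$ for $p_k\in P_k^d$ and $(\div(r-\varPi^{\mathrm{div}}r),w)_K=0$ for $w\in P_{k+1}(K)$. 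These functionals are bounded on $H(\div,K)$, they are what the cited $\varPi^{\mathrm{div}}_{k+2}$ preserves, and your divergence moments against $P_k$ follow at once.

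Your $H^1$ component is sound. On \eqref{eq:const-preserve-Pi} you are more careful than the paper, which simply asserts that $\varPi^{\mathrm{grad}}_{k+d}$ preserves element moments up to degree $k$. As you note, the Fortin property needs interior moments only up to degree $k-1$, so for $k=0$ the element average is an extra condition. Your face-bubble fix works. Let $b_F\in P_d$ be the bubble of the face $F$ (it vanishes on the other faces) and $c$ its mean over $F$. Then $\sum_F b_F-c$ has zero mean on every face and element mean $c(d-1)/2\neq0$ for $d\ge2$. Hence $\mathrm{span}\{1,b_F : F\subset\partial K\}$ is unisolvent for the face means plus the element mean, and since it contains the constants, your Poincar\'e--scaling argument goes through.
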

\begin{proof}
  This immediately follows from the results of~\cite{GopalQiu14} or
  the more recent review~\cite{DPGacta}: take
  $\fpb (r, v) = (\varPi^{\mathrm{div}}_{k+2} r,
  \varPi^{\mathrm{grad}}_{k+d} v)$ for any $(r, v) \in \Va$ using the
  operators $\varPi^{\mathrm{grad}}_{k+d}$
  of~\cite[Theorem~5.4]{DPGacta} and $\varPi^{\mathrm{div}}_{k+2}$
  of~\cite[Theorem~5.8]{DPGacta}. By construction~\cite{GopalQiu14},
  $\varPi^{\mathrm{grad}}_{k+d}$ preserves the moments of degree up to
  $k$ on each element, in particular the element averages,
  which gives~\eqref{eq:const-preserve-Pi}.  The arguments there are repeatable
  here since the bilinear form $\ba(\clz,\clv) = (\Ba \clz)(\clv)$ is
  the ultraweak DPG form for the linear system $\dive q = f$,
  $p = \nabla u$ (which differs from prior standard studies only in
  that the two equations are decoupled, a difference immaterial
  here). The cited results provide mesh-independent bounds for the
  norms of $\varPi^{\mathrm{div}}_{k+2}$ and
  $\varPi^{\mathrm{grad}}_{k+d}$, and hence for~$\fpb$.  Additionally,
  since $\Ba$ carries no $\alpha$-dependence, it is also independent
  of $\alpha \in \cl Q$.
\end{proof}

\begin{theorem}
  \label{thm:vc-MDPG-example}
  In the setting of \eqref{eq:B-C-example} and~\eqref{eq:phi-asm}, set
  $\kappa_1,\kappa_2, \mu$ as in~\eqref{eq:kappas-mu}.  Then the error
  estimator $\cl L(\alpha,{\clw_h})$, given by the MDPG loss function in \eqref{eq:C&H-lossh}, is globally
  reliable and efficient for any trial function $\clw_h$ in $\Uah$,
  i.e., \eqref{eq:reliability-L} and \eqref{eq:efficiency-L} hold with
  mesh-independent constants. If the right hand side $f$ of
  \eqref{eq:4} is a piecewise constant function in $P_0$, then there
  are mesh-independent constants $c, C$ such that
  \begin{equation}
    \label{eq:18}
    c \| \clu  - \clw_h \|_\Ua^2  \le
    \cl L(\alpha,\clw_h) \le C
    \| \clu - \clw_h \|_\Ua^2,
  \end{equation}
  for all $\clw_h \in \Uah$.
\end{theorem}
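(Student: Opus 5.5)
The plan is to apply Theorem~\ref{thm:gen-nonlinear-dpg} directly, since its hypotheses are exactly what Propositions~\ref{prop:mdpg-verif} and~\ref{prop:fortin-b-verif} supply.

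Proposition~\ref{prop:mdpg-verif} verifies Assumption~\ref{asm:MDPG} with mesh-independent $c_B, c_U, c_V, c_{\rm Lip}$. Proposition~\ref{prop:fortin-b-verif} verifies Assumption~\ref{asm:fortin-b} with $\|\fpb\|_{L(\Va)}$ bounded independently of $\oh$ and $\alpha$. For Assumption~\ref{asm:N}, the MDPG loss uses the canonical choice $\Ma{\clz} = \|\Ca(\clz)\|_{\Uap}$. This satisfies \eqref{eq:N-pairing} by definition of the dual norm, and \eqref{eq:N-lip} with $c_M = c_{\rm Lip}$ by \eqref{eq:S4-Lip}. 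Fixing $\kappa > \kappa_0 = c_V + c_B c_U$, which is a mesh-independent threshold, Theorem~\ref{thm:gen-nonlinear-dpg} gives \eqref{eq:reliability-L} and \eqref{eq:efficiency-L}.

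The constant $C = \kappa\|\Ba\|^2_{L(\Ua,\Vap)} + c_{\rm Lip}^2$ is mesh-independent because $b_\alpha$ is bounded uniformly in the weighted norms \eqref{eq:mdpg-norms}. To see this, bound each term of \eqref{eq:b-alpha} by Cauchy--Schwarz; for the interface pairings use the quotient-norm definitions \eqref{eq:quotient-norms} together with the duality of $\Hoh$ against $\Hdivoh$ and of $\Hmh$ against $H^1(\oh)$. The constant $c$ in \eqref{eq:8} involves only $c_U$, $\kappa$ and $\|\fpb\|_{L(\Va)}$, all of which are mesh-independent. This proves the first claim.

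For \eqref{eq:18} it suffices to show that $\osc(\ell) = 0$ when $f \in P_0$. Then \eqref{eq:reliability-L} reduces to $c\|\clu-\clwh\|^2_\Ua \le \|\fpb\|^2_{L(\Va)}\,\cl L$. Dividing by the mesh-independent bound on $\|\fpb\|^2_{L(\Va)}$ gives the lower estimate, and efficiency gives the upper one. To prove $\osc(\ell)=0$, take $\clv = (r,w) \in \Va$. Since $\ell(\clv) = (f,w)_{L_2}$ depends only on the second component, $\ell(\clv - \fpb\clv) = (f, w - w_\varPi)_{L_2}$. This vanishes by \eqref{eq:const-preserve-Pi} applied with $v_0 = f \in P_0$.

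The main point requiring care is reading \eqref{eq:const-preserve-Pi} elementwise. It must hold for piecewise constant $v_0$, which is how $P_0$ is used with the broken space $\Vah$, and not just for global constants. This is guaranteed because $\varPi^{\mathrm{grad}}_{k+d}$ preserves element moments of degree up to $k \ge 0$ on each $K$. I would also double-check that the uniform boundedness of $\Ba$ holds with the $\kappa_1$-weights, which I expect to be routine.
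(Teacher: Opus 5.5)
Your proposal is correct and follows the paper's proof. Theorem~\ref{thm:gen-nonlinear-dpg} is applied with Assumptions~\ref{asm:MDPG} and~\ref{asm:fortin-b} supplied by Propositions~\ref{prop:mdpg-verif} and~\ref{prop:fortin-b-verif}, the canonical choice $\Ma{\clz}=\|\Ca(\clz)\|_{\Uap}$ gives Assumption~\ref{asm:N} with $c_M=c_{\rm Lip}$, and $\osc(\ell)=0$ for $f\in P_0$ follows from \eqref{eq:const-preserve-Pi}.

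You also check that $\|\Ba\|_{L(\Ua,\Vap)}$ is bounded independently of the mesh and of $\kappa_1$, using the quotient-norm dualities for the interface terms. The paper leaves this implicit, and it goes through as you describe.
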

\begin{proof}
  This follows from Theorem~\ref{thm:gen-nonlinear-dpg} since
  Propositions~\ref{prop:mdpg-verif} and~\ref{prop:fortin-b-verif}
  verify its conditions---the MDPG choice
  $\Ma{\clz} = \| \Ca(\clz) \|_{\Uap}$ satisfies
  Assumption~\ref{asm:N} with $c_M = c_{\rm Lip}$, as noted following
  Assumption~\ref{asm:N}. In addition, 
  when $f \in P_0$, we conclude that $\ell ( (r, w) - \fpb (r,w)) = (f, w - \varPi^{\mathrm{grad}}_{k+d} w)_{L_2} = 0$ due
  to \eqref{eq:const-preserve-Pi}, so $\osc(\ell)=0$, and 
  \eqref{eq:reliability-L}--\eqref{eq:efficiency-L} give~\eqref{eq:18}.
\end{proof}

Note that the MDPG variational correctness result of
Theorem~\ref{thm:vc-MDPG-example} is global, without any assumptions on the
specific form of $\alpha(x)$,   in the sense that it does not restrict trial functions
$\clw_h$ to any small neighborhood of the exact solution, and
allows for arbitrary degree $(k)$ discretizations (although the norm
$\| \fpb\|_{L(\Va)}$ may depend on $k$).

\subsection{Variational correctness of the MFOSLS loss function}
\label{ssec:mfosls-verif}

We now verify the assumptions in the MFOSLS setting
of~\S\ref{sec:fosls-loss-function}, keeping the nonlinearity
of~\eqref{eq:phi-asm} and setting $\kappa_1, \kappa_2, \mu$ as
in~\eqref{eq:kappas-mu}.  Since $\Vah = \Va$ and $\fpb = \mathrm{id}$,
Assumption~\ref{asm:fortin-b} is obviously  verified, so it remains to
verify Assumptions~\ref{asm:MDPG} and~\ref{asm:N}.  By
\eqref{eq:mfosls-b}, the kernel of $\Ba$ is now parameterized by
$E(q, u) := (q, u, \nabla u)$ as
\begin{equation}
  \label{eq:E-map-fosls}
  \ker(\Ba) = \{ E(q, u) : u \in \Ho^1(\om), \; q \in \Hdzero \},
\end{equation}
since $\Ba \clz = 0$ if and only if $\dive q = 0$ and
$p = \nabla u$.  Moreover, for $u \in \Ho^1(\om)$ and
$q \in \Hdzero$, the norm in~\eqref{eq:mfosls-norms} gives
$\| E(q, u) \|_{\Ua}^2 = \| u \|_{L_2}^2 + 2 \| \nabla u \|_{L_2}^2 +
\kappa_1^{-2} \| q \|_{L_2}^2$, so the Friedrichs
inequality~\eqref{eq:Friedrichs-std} shows that
\begin{equation}
  \label{eq:cE-fosls}
  c_E \, \| E(q,u) \|^2_{\Ua}
  \le \| \nabla u \|^2_{L_2(\om)} + \kappa_1^{-2}\| q \|^2_{L_2(\om)},
  \qquad u \in \Ho^1(\om), \; q \in \Hdzero,
\end{equation}
holds with $c_E^{-1} = 2 + c_P^{-1}$.

\begin{proposition}[Verification for MFOSLS]
  \label{prop:mfosls-verif}
  In the setting of~\eqref{eq:mfosls-setting} and~\eqref{eq:phi-asm},
  set $\kappa_1, \kappa_2, \mu$ as in~\eqref{eq:kappas-mu}.  Then all
  conditions of Assumption~\ref{asm:MDPG} hold for $\Ba$ and $\Ca$
  there, and Assumption~\ref{asm:N} holds for  $\Ma\cdot$, defined in
   ~\eqref{eq:mfosls-N}.  Specifically,
  \begin{enumerate}[label={\rm(\roman*)}]
  \item the surjectivity of~\eqref{eq:S4-B-surjective} holds;
  \item the inequality~\eqref{eq:S4-infsup} holds with
    $c_B = 1 + c_P^{-1}$, where $c_P$ is the Friedrichs constant
    of~\eqref{eq:Friedrichs-std};
  \item the inequality~\eqref{eq:S4-mono} holds with
    $c_U = c_E c_r / 4$ and $c_V$ given by the formula in
    Proposition~\ref{prop:mdpg-verif}(iii), using the above values of
    $c_E$ and $c_B$;
  \item the implication~\eqref{eq:C-zero} holds, and
  \item the inequality~\eqref{eq:S4-Lip} and both conditions of
    Assumption~\ref{asm:N} are valid with
    $c_{\rm Lip} = c_M =
    \kappa_1^{-2} \big[ (\kappa_1^2 + \kappa_2^2)
    (\kappa_1^2 + \kappa_1) \big]^{1/2}$.
  \end{enumerate}
\end{proposition}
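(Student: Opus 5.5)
We follow the structure of the proof of Proposition~\ref{prop:mdpg-verif}, replacing the broken ultraweak ingredients by their strong-form counterparts and checking that each step only used properties that persist.

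For (i), given $(g, s) \in \Vap \cong L_2(\om) \times L_2(\om)^d$, we exhibit a preimage directly. Surjectivity of $\dive: \Hdiv \to L_2(\om)$ gives $q$ with $\dive q = g$; for instance $q = \nabla \psi$ with $\psi \in \Ho^1$ solving $\Delta\psi = g$. We then take $u = 0$ and $p = s$.

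For (ii), we take $\clz = (q,u,p)$ and build an element of $\ker(\Ba)$ close to it. Choose $\psi \in \Ho^1$ with $\Delta \psi = \dive q$, and set $q_0 = q - \nabla\psi \in \Hdzero$ and $\clz_0 = (q_0, u, \nabla u) \in \ker(\Ba)$. Then $\| \clz - \PB\clz\|_{\Ua} \le \|\clz - \clz_0\|_{\Ua}$, and
\[
\|\clz-\clz_0\|_{\Ua}^2 = \|p-\nabla u\|_{L_2}^2 + \kappa_1^{-2}\big(\|\nabla\psi\|_{L_2}^2 + \|\dive q\|_{L_2}^2\big).
\]
Testing $\Delta\psi = \dive q$ with $\psi$ and using \eqref{eq:Friedrichs-std} gives $\|\nabla\psi\|_{L_2}^2 \le c_P^{-1}\|\dive q\|_{L_2}^2$. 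Comparing with \eqref{eq:mfosls-first-term} then yields \eqref{eq:S4-infsup} with $c_B = 1 + c_P^{-1}$.

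For (iii), we repeat the argument of Proposition~\ref{prop:mdpg-verif}(iii) verbatim. That argument used only four facts:
\begin{itemize}
\item the kernel parameterization, now \eqref{eq:E-map-fosls};
\item the bound \eqref{eq:cE-fosls} in place of \eqref{eq:cE-diffus};
\item the estimate \eqref{eq:S4-infsup}, which gives $\kappa_1^{-2}\|q_{12}-q_0\|^2 + \|p_{12}-p_0\|^2 \le \|\clz_{12} - \PB\clz_{12}\|_{\Ua}^2 \le c_B\cl B_{12}$, since these are parts of the $\Ua$-norm \eqref{eq:mfosls-norms};
\item the orthogonality $(q_0,\nabla u_0)=0$, together with the pointwise bounds \eqref{eq:globalized-sigma} and the matrix eigenvalue bound.
\end{itemize}
The only point to check is that $\|p_0\|_{L_2}^2 \le P_{12}$, which holds because $\|\nabla u_0\|^2$ appears in $\|E(q_0,u_0)\|^2_{\Ua}$; likewise $\kappa_1^{-2}\|q_0\|^2 \le P_{12}$. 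The constants then come out identical in form.

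For (iv), the Helmholtz argument of Proposition~\ref{prop:mdpg-verif}(iv) is unchanged, because $\Ca$ is the same formula and the kernel has the same $(q,p)$-components.

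Part (v) is where the new work lies. For \eqref{eq:N-pairing} we write $\langle\Ca(\clz),\clw\rangle = \kappa_1^{-2}(q-\sigma_\alpha(p), \tilde q + \kappa_2\tilde p)_{L_2}$ and apply Cauchy--Schwarz together with
\[
\|\tilde q + \kappa_2 \tilde p\|_{L_2} \le (\kappa_1^2+\kappa_2^2)^{1/2}\big(\kappa_1^{-2}\|\tilde q\|_{L_2}^2 + \|\tilde p\|_{L_2}^2\big)^{1/2} \le (\kappa_1^2+\kappa_2^2)^{1/2}\|\clw\|_{\Ua}.
\]
The last step is where the stronger graph norm merely helps. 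For \eqref{eq:N-lip}, note that $\Ca(\bar\clz)=0$ forces $\bar q = \sigma_\alpha(\bar p)$. This follows from (iv), or directly by testing with $\tilde q \in C_c^\infty$, which is dense in $L_2$. Hence
\[
\|q-\sigma_\alpha(p)\|_{L_2} \le \|q-\bar q\|_{L_2} + \sqrt{\kappa_1}\,\|p-\bar p\|_{L_2} \le (\kappa_1^2+\kappa_1)^{1/2}\|\clz-\bar\clz\|_{\Ua},
\]
giving $c_M$ as stated. Finally, \eqref{eq:S4-Lip} follows exactly as in Proposition~\ref{prop:mdpg-verif}(v).

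The main obstacle is the justification, in (iv) and (v), that vanishing of $\Ca(\bar\clz)$ in $\Uap$ implies $\bar q=\sigma_\alpha(\bar p)$ pointwise. Testing requires admissible $(\tilde q,\tilde p)$ with $\tilde q\in\Hdiv$, and we handle this by the density of $\Hdiv$ (or of $C_c^\infty$) in $L_2(\om)^d$.
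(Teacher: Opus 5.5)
Your proposal is correct and follows the paper's proof essentially step for step. It uses the same Dirichlet-lift element $(q-\nabla\psi,u,\nabla u)\in\ker(\Ba)$ for (i)--(ii), the verbatim transfer of the MDPG monotonicity argument for (iii), the Helmholtz argument for (iv), and the same Cauchy--Schwarz bounds with $\sqrt{\kappa_1}=\beta_2/\alpha_1$ for (v). Your closing ``obstacle'' is not really one: the $p$-slot of $\Ua$ is all of $L_2(\om)^d$, so testing $\Ca(\bar\clz)=0$ with $(0,0,\tilde p)$ gives $\bar q=\sigma_\alpha(\bar p)$ immediately, which is what the paper does. Your density argument through the $q$-slot is also valid.
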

\begin{proof}
  Writing $\clz = (q, u, p)$ for a generic element of $\Ua$, we prove
  each item.

  (i)--(ii) Given $\clz \in \Ua$, let $\varphi \in \Ho^1(\om)$ solve
  the Dirichlet problem
  $(\nabla \varphi, \nabla v)_{L_2} = -(\dive q, v)_{L_2}$ for all
  $v \in \Ho^1(\om)$.  Then $\nabla \varphi \in \Hdiv$ with
  $\dive \nabla\varphi = \dive q$, and testing with $v = \varphi$, the
  Friedrichs inequality~\eqref{eq:Friedrichs-std} gives
  $\| \nabla \varphi \|_{L_2} \le c_P^{-1/2} \| \dive q \|_{L_2}$.  By
  \eqref{eq:E-map-fosls}, the function
  $\clz_0 := (q - \nabla\varphi, \, u, \, \nabla u)$ belongs to
  $\ker(\Ba)$, and $\clz - \clz_0 = (\nabla\varphi, 0, p - \nabla u)$. This gives
  \[
    \| \clz - \PB\clz \|_{\Ua}^2
    \le \| \clz - \clz_0 \|_{\Ua}^2
    = \kappa_1^{-2} \big( \| \nabla \varphi \|_{L_2}^2
    + \| \dive q \|_{L_2}^2 \big) + \| p - \nabla u \|_{L_2}^2
    \le ( 1 + c_P^{-1} ) \, \| \Ba \clz \|_{\Vap}^2,
  \]
  where the last step uses
  $\| \Ba \clz \|_{\Vap}^2 = \kappa_1^{-2} \| \dive q \|_{L_2}^2 + \|
  p - \nabla u \|_{L_2}^2$, a consequence of the self-duality of the
  weighted $L_2$ norms in~\eqref{eq:mfosls-norms}.  This
  proves~(ii).  Regarding~(i), given any $(g_1, g_2) \in L_2(\om) \times L_2(\om)^d
  \cong \Vap$, solving the same Dirichlet problem with $-\dive q$
  replaced by $-g_1$ and setting $\clz = (\nabla\varphi, 0, g_2)$,
  yields $\Ba \clz = (g_1, g_2)$.

  (iii) The proof of Proposition~\ref{prop:mdpg-verif}(iii) applies
  verbatim.  Indeed, that argument uses only the
  already-established~(ii), the estimate~\eqref{eq:cE-fosls} in place
  of~\eqref{eq:cE-diffus}, the orthogonality
  $(q_0, p_0)_{L_2} = (q_0, \nabla u_0)_{L_2} = 0$ (valid here since
  $\dive q_0 = 0$, and $u_0 \in \Ho^1(\om)$), as well as manipulations of the
  $L_2$-components $q_{12}, p_{12}$ that are insensitive to the
  remaining components of the trial tuples.

  As for (iv), suppose $\clz \in \Ua$ satisfies
  $\ip{\Ca(\clz), \clz_0} = 0$ for all $\clz_0 \in \ker(\Ba)$.  Set
  $g := q - \sigma_\alpha(p) \in L_2(\om)^d$.
  Using~\eqref{eq:E-map-fosls} and~\eqref{eq:C-mdpg'}, for every
  $u_0 \in \Ho^1(\om)$ and $q_0 \in \Hdzero$, we have 
  $    0 = \big\langle \Ca(\clz), E(q_0, u_0) \big\rangle
    = \kappa_1^{-2} \big( g, \; q_0 + \kappa_2 \nabla u_0 \big)_\oh.
  $
  Choosing $q_0 = 0$, gives $g \perp \nabla \Ho^1(\om)$, and choosing
  $u_0 = 0$ gives $g \perp \Hdzero$.  Since
  $\nabla \Ho^1(\om) \oplus \Hdzero$ equals all of $L_2(\om)^d$, by the Helmholtz
  decomposition, $g = 0$, and hence $\Ca(\clz) = 0$ in $\Uap$.

  To see (v), as in the proof of Proposition~\ref{prop:mdpg-verif}(v),  the global Lipschitz bound
  $\|\sigma_\alpha(p_1)-\sigma_\alpha(p_2)\|_{L_2} \le \sqrt{\kappa_1}
  \, \|p_1-p_2\|_{L_2}$ gives for any
  $\clz_1, \clz_2, \clz \in \Ua$
  \begin{equation}
    \label{eq:C12-mfosls}
    \big| \langle \Ca(\clz_1) - \Ca(\clz_2), \clz \rangle \big|
    \le \kappa_1^{-2}
    \big( \sqrt{\kappa_1}\,\|{p_{12}}\|_{L_2} + \|{q_{12}}\|_{L_2} \big)
    \, \| q + \kappa_2 p \|_{L_2}.
  \end{equation}
  In view the norm~\eqref{eq:mfosls-norms},   the
  Cauchy--Schwarz inequality in $\bb R^2$ provides
  $
      \sqrt{\kappa_1}\, \| p_{12} \|_{L_2} + \| q_{12} \|_{L_2}
    \le \big( \kappa_1 + \kappa_1^2 \big)^{1/2}
    \big( \| p_{12} \|_{L_2}^2
    + \kappa_1^{-2} \| q_{12} \|_{L_2}^2 \big)^{1/2}.
    $
    Hence 
    \begin{gather}
      \label{eq:mfosls-CS2}
      \sqrt{\kappa_1}\, \| p_{12} \|_{L_2} + \| q_{12} \|_{L_2}
      \le \big( \kappa_1 + \kappa_1^2 \big)^{1/2}
      \| \clz_1 - \clz_2 \|_{\Ua}.
    \end{gather}
    Similarly, one obtains
  \begin{gather}
    \label{eq:mfosls-CS1}
    \| q + \kappa_2 p \|_{L_2}
    \le \kappa_1 \, \kappa_1^{-1} \| q \|_{L_2}
    + \kappa_2 \| p \|_{L_2}
    \le \big( \kappa_1^2 + \kappa_2^2 \big)^{1/2} \| \clz \|_{\Ua}.
  \end{gather}
  Combining~\eqref{eq:C12-mfosls},~\eqref{eq:mfosls-CS2} and~\eqref{eq:mfosls-CS1},  proves~\eqref{eq:S4-Lip} with the
  stated constant $c_{\rm Lip}$.  Condition~\eqref{eq:N-pairing} also follows
  from~\eqref{eq:mfosls-CS1}, since
  $\ip{ \Ca(\clz_1), \clz }
  = \kappa_1^{-2} ( q_1 - \sigma_\alpha(p_1),
  \, q + \kappa_2 p )_{L_2}
  \le \Ma{\clz_1} \| \clz \|_{\Ua}$,
  by~\eqref{eq:mfosls-N}.  Finally, if $\bar\clz \in \Ua$ satisfies
  $\Ca(\bar\clz) = 0$, then testing with tuples $\clz = (0, 0, p)$
  shows $(\bar q - \sigma_\alpha(\bar p), p)_{L_2} = 0$ for all
  $p \in L_2^d$, i.e., $\bar q = \sigma_\alpha(\bar p)$.  Hence, for
  any $\clz_1 \in \Ua$, writing
  $q_1 - \sigma_\alpha(p_1) = (q_1 - \bar q) -
  (\sigma_\alpha(p_1) - \sigma_\alpha(\bar p))$, the Lipschitz bound
  and~\eqref{eq:mfosls-CS2} (with $\clz_2 = \bar\clz$) give
  \[
    \Ma{\clz_1}
    \le \kappa_1^{-2} \big( \kappa_1^2 + \kappa_2^2 \big)^{1/2}
    \big( \| q_1 - \bar q \|_{L_2}
    + \sqrt{\kappa_1} \| p_1 - \bar p \|_{L_2} \big)
    \le c_M \| \clz_1 - \bar\clz \|_{\Ua},
  \]
  which is~\eqref{eq:N-lip} with $c_M = c_{\rm Lip}$ as stated.
\end{proof}

\begin{theorem}
  \label{thm:vc-MFOSLS-example}
  In the setting of~\eqref{eq:mfosls-setting} and~\eqref{eq:phi-asm},
  set $\kappa_1, \kappa_2, \mu$ as in~\eqref{eq:kappas-mu} and
  $\kappa$ as in~\eqref{eq:S4-kappa}.  Then the MFOSLS loss
  function~\eqref{eq:mfosls-loss} is globally reliable and efficient
  for any element of the full trial  space $\Ua =
  \Hdiv \times \Ho^1(\om) \times L_2(\om)^d$. (Indeed no finite elements or mesh was used.) I.e., there  are constants $c, C > 0$ such that
  \begin{equation}
    \label{eq:mfosls-vc}
    c \, \| \clu - \clz \|_{\Ua}^2
    \le \cl L(\alpha, \clz)
    \le C \, \| \clu - \clz \|_{\Ua}^2,
    \qquad \text{ for all } \clz \in \Ua, \; \alpha \in \cQ.
  \end{equation}
\end{theorem}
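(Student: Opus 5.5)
The plan is to obtain Theorem~\ref{thm:vc-MFOSLS-example} as a direct specialization of Theorem~\ref{thm:gen-nonlinear-dpg} in the least-squares instance of \S\ref{ssec:mfosls-abstract}. I will take $\Uah := \Ua = \Hdiv \times \Ho^1(\om) \times L_2(\om)^d$, $\Vah := \Va$ and $\fpb := \mathrm{id}$. With these choices Assumption~\ref{asm:fortin-b} holds trivially, because \eqref{eq:Fortin-b} reads $\ba(\clw_h, 0) = 0$. Moreover $\|\fpb\|_{L(\Va)} = 1$ and $\osc(\ell) = 0$ for every $\ell \in \Vap$, since $\clv - \fpb\clv = 0$. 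Assumptions~\ref{asm:MDPG} and~\ref{asm:N} for the operators of \eqref{eq:mfosls-setting} and the surrogate \eqref{eq:mfosls-N} are exactly the content of Proposition~\ref{prop:mfosls-verif}, which I will invoke with $\kappa_1,\kappa_2,\mu$ from \eqref{eq:kappas-mu}. Together with the choice of $\kappa$ from \eqref{eq:S4-kappa}, all hypotheses of Theorem~\ref{thm:gen-nonlinear-dpg} are therefore in place.

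Applying Theorem~\ref{thm:gen-nonlinear-dpg} then gives, for every $\alpha \in \cQ$ and every $\clz \in \Ua$, the bound $c\|\clu - \clz\|_{\Ua}^2 \le \cl L(\alpha,\clz)$ from \eqref{eq:reliability-L} (with $\|\fpb\|=1$ and $\osc(\ell)=0$), together with \eqref{eq:efficiency-L}. Here $\cl L$ is the loss of Definition~\ref{defn:mdpg} for these choices. By \eqref{eq:mfosls-first-term} and \eqref{eq:mfosls-N}, this loss coincides with the explicit formula \eqref{eq:mfosls-loss}. The constants will be $C = \kappa\|\Ba\|_{L(\Ua,\Vap)}^2 + c_M^2$, with $c_M$ from Proposition~\ref{prop:mfosls-verif}(v), and $c$ given by \eqref{eq:8} with $\|\fpb\|=1$, i.e.\ $c = c_U/(2\max(\gamma,2\kappa))$ with $\gamma = \max(2, 1/(2c_U))$.

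Two small points remain to be checked. First, $\|\Ba\|_{L(\Ua,\Vap)}$ must be finite. This follows because $\|\Ba\clz\|_{\Vap}^2 = \kappa_1^{-2}\|\dive q\|_{L_2}^2 + \|p - \nabla u\|_{L_2}^2 \le 2\|\clz\|_{\Ua}^2$ by the norms \eqref{eq:mfosls-norms}, so $\|\Ba\| \le \sqrt 2$. Second, the constants must be uniform in $\alpha \in \cQ$. This holds because $c_P$, $c_E$, $c_B$, $c_r$, $\kappa_1$, $\kappa_2$ and $c_M$ depend only on $\beta_1,\beta_2,\alpha_1,\alpha_2,\mu$ and $\om$, via \eqref{eq:15} and \eqref{eq:kappas-mu}. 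No step here is a genuine obstacle, since the substantive work was done in Proposition~\ref{prop:mfosls-verif}. The only care needed is to note that the trial set in Theorem~\ref{thm:gen-nonlinear-dpg} may be taken to be all of $\Ua$. This is legitimate because the proof of that theorem used Assumption~\ref{asm:fortin-b} only through \eqref{eq:Fortin-b}, which is trivial for $\fpb = \mathrm{id}$, and never used any finite dimensionality of $\Uah$.
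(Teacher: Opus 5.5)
Your proposal is correct and follows the paper's proof: you specialize Theorem~\ref{thm:gen-nonlinear-dpg} with $\Uah=\Ua$, $\Vah=\Va$, $\fpb=\mathrm{id}$ (so $\|\fpb\|_{L(\Va)}=1$ and $\osc(\ell)=0$), invoke Proposition~\ref{prop:mfosls-verif} for Assumptions~\ref{asm:MDPG} and~\ref{asm:N}, and bound $\|\Ba\|_{L(\Ua,\Vap)}^2\le 2$ using the norms~\eqref{eq:mfosls-norms}. Your additional remarks, that the constants are uniform in $\alpha$ and that the trial set may be all of $\Ua$, are accurate and spell out points the paper leaves implicit.
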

\begin{proof}
  By Proposition~\ref{prop:mfosls-verif}, Assumptions~\ref{asm:MDPG}
  and~\ref{asm:N} hold.  Choosing $\Uah = \Ua$, $\Vah = \Va$, and
  $\fpb = \mathrm{id}$, Assumption~\ref{asm:fortin-b} holds trivially
  with $\| \fpb \|_{L(\Va)} = 1$ and $\osc(\ell) = 0$, as noted
  in~\S\ref{ssec:mfosls-abstract}.
  Theorem~\ref{thm:gen-nonlinear-dpg} then
  gives~\eqref{eq:mfosls-vc} with the $c$ of~\eqref{eq:8} and
  $C = \kappa \| \Ba \|_{L(\Ua, \Vap)}^2 + c_M^2$. Note that
  $\| \Ba \|_{L(\Ua,\Vap)}^2 \le 2$ by~\eqref{eq:mfosls-norms}.
\end{proof}

We conclude this section with a few remarks comparing MFOSLS and MDPG
loss functions. In contrast to Theorem~\ref{thm:vc-MDPG-example}, no
data oscillation restriction (like $f \in P_0$) appears in
Theorem~\ref{thm:vc-MFOSLS-example}, no mesh of finite elements
enters, and the two-sided
bound~\eqref{eq:mfosls-vc} holds on the entire trial space $\Ua$
rather than on a discrete subspace.  What MFOSLS requires instead is
conformity of the trial candidates, i.e., $q \in \Hdiv$ and
$u \in \Ho^1(\om)$ with exactly imposed boundary conditions.

\section{Application to neural network approximations}
\label{sec:nn}
\subsection{The learning problem}
\label{ssec:learning-problem}
We next use the above findings to ``learn'' the mapping $\alpha\mapsto \clu(\alpha)$.
As the term ``learn'' suggests we employ some sort of {\em regression} over a suitable
{\em hypothesis class} $\mathcal H= \mathcal H(\Theta)\subset \Ua$, $\alpha\in \cQ$,
of functions with input variables $x\in\Omega, \alpha\in \cQ$, determined by an array 
$\theta\in\Theta$ of trainable weights, postponing the specification of 
$\mathcal H(\Theta)$ for a moment.
Moreover,  assuming that $\cQ$ is endowed with a probability measure $\nu$, we view 
the exact map $\alpha\mapsto \clu(\alpha)$ as the minimizer of the population risk
\newcommand{\Y}{\mathbb{Y}}
\newcommand{\X}{\mathbb{X}}
\begin{equation}
\label{minexp}
\clu =\argmin_{\clz \in \X}\mathbb E_{\alpha\sim\nu}\big[\cl L(\alpha,\clz(\alpha))\big],
\end{equation}
where
$$
\X:= L_{2,\nu}(\cQ;(\Ua)_{\alpha\in\cQ}),
$$
understood in the sense of direct integrals~\cite{BDO},
accommodates the solutions of \eqref{eq:Au=ell} as functions of $x\in \Omega$ and 
$\alpha\in \cQ$. In these terms, the accuracy of an approximation 
$
\alpha\mapsto \clu(\alpha;\theta^*)  
$ to the solution operator
\begin{equation}
    \label{solop}
\mathcal F:   \alpha \mapsto \clu(\cdot;\alpha)
\end{equation}
is measured through 
$$
\|\clu-\clu(\theta^*)\|^2_\X = \mathbb{E}_{\alpha\sim\nu}\big[\|\clu(\alpha)- \clu(\alpha;\theta^*)\|_\Ua^2\big].
$$
Variational correctness then means that 
\begin{equation}
    \label{vc}
c\,\|\clu-\clu(\theta^*)\|^2_\X
\le \mathbb E_{\alpha\sim\nu}\big[\cl L(\alpha,\clu(\alpha;\theta^*))\big]
\le C\,\|\clu-\clu(\theta^*)\|^2_\X
\end{equation}
with constants $0<c\le C$ independent of $\alpha$; by
Theorem~\ref{thm:gen-nonlinear-dpg}, this holds, e.g., whenever
$\osc(\ell)=0$.

Of course, one cannot exactly minimize the expectations, and a common approach in 
operator learning is to approximate $\clu:\Omega\times \cQ \to \mathbb R$ by 
minimizing instead an empirical counterpart. This requires computing high fidelity 
solutions of \eqref{eq:Au=ell} for some (randomly drawn) parameter samples 
$\alpha\in \cQ$. Given the slow decay of Monte Carlo rates,  achieving in the end 
an acceptable accuracy, requires a rather large number of such training samples, 
which especially for nonlinear PDEs may entail a prohibitive cost. Therefore, in 
view of \eqref{vc}, we instead minimize an empirical counterpart of the
loss itself, which requires no solution data:
\begin{equation}
    \label{eq:erm}
\clu(\alpha;\theta^*)\approx \clu(\alpha),\qquad \theta^*\in \argmin_{\theta\in\Theta}\frac{1}{\#\widehat\cQ_{\rm train}}\sum_{\alpha\in\widehat\cQ_{\rm train}} \cl L(\alpha, \clu(\alpha;\theta)),
\end{equation}
where $\widehat\cQ_{\rm train}\subset\cQ$ is a finite training set of independent samples drawn from 
$\cQ$ with distribution~$\nu$.

We primarily focus on  a finite-dimensional parameter set for
$\alpha$ by subdividing $\om$ into finitely many subdomains $\om_i$,
$i=1, \dots, m_\alpha$, and requiring $\alpha$ to be a constant in each
$\om_i$. Namely, using $\chi_i(x)$ to denote the indicator function of
$\om_i$, we define the parameter domain to be
\begin{equation}
  \label{eq:Qdefn}
  \mathcal{Q} = \left \{\sum_{i=1}^{m_\alpha} \alpha_i \chi_i(x) \; : \; \alpha_i > 0 \right \}.
\end{equation}
This piecewise-constant setting provides transparent nonlinear interface
problems while retaining a nontrivial finite-dimensional parameter-to-solution
map.  The compact coefficient ranges used in the numerical experiments are
specified below. This relatively simple setting suffices to highlight the issues discussed below.

We now describe the hypothesis classes used to approximate $\cl F$. Let $\Uah$ 
denote a hierarchy of $\Ua$-conforming finite element spaces on corresponding 
shape regular meshes of maximal element diameter~$h$, 
e.g.,~\eqref{eq:mdpg-discrete} for the MDPG loss and \eqref{eq:mfosls-discrete} 
for the MFOSLS loss. We work with so called hybrid representation formats where 
the functions used for prediction are piecewise polynomials as functions of the 
spatial variables whose coefficients are parameter dependent and represented as 
neural networks. Two major advantages concern the realization of essential 
boundary conditions and the ease of generating spatial derivatives while 
backpropagation remains reserved to parametric variables.

Specifically, 
suppressing the dependence on $h$ in the notation, suppose that we are given a basis 
$\psi_j$, $j=1, \dots, m_h$, for $\Uah$ and that an approximation of
$\cl F(\alpha)(x) =  \clu(x,\alpha) $ has been found, for each given $\alpha$ in
$\cl Q$, in their span
\[
  \Uah = \left\{
    \sum_{j=1}^{m_h} v_j\, \psi_j(x) : v_j \in \bb R \right\}
\]
by some method.  Then a computable approximation of the exact $\cl F$
in~\eqref{solop}, which we denote by $\cl F_h$, 
can be implemented as a 
map between Euclidean  vector spaces,
$\mathbb{F}: \mathbb{R}^{m_\alpha} \to \mathbb{R}^{m_h}$. The mapping
of functions $\cl F_h$ is generated from the mapping $\mathbb F$ of 
vectors by
\begin{equation}
  \label{eq:fun-map}
  \cl F_h\Big( \sum_{i=1}^{m_\alpha} \alpha_i \chi_i(x)\Big)
  = \sum_{j=1}^{m_h} \,[\mathbb F(\bbalpha)]_j\, \psi_j(x),
  \qquad x \in \om,  
\end{equation}
where $\bbalpha$ is the vector of coefficients of $\alpha \in \cl Q$, i.e.,
$\bbalpha = (\alpha_1, \alpha_2, \dots)$.
We shall construct such approximate mappings $\Fh$ using neural
networks. 

To construct the map $\mathbb{F}$ using an NN, we follow \cite{CDG} and
use an architecture proposed in~\cite{BDO},
described next. Let $\varrho$ denote a leaky ReLU function
defined componentwise by
\[
  \varrho: \mathbb R^N \to \mathbb R^N,\qquad
  \varrho(y)_i =\max( y_i, 10^{-3} y_i), \qquad y \in \bb R^N,
\]
for some integer $N$.
Next, we select another positive integer $r$ (often $r\ll N$ in low-rank
variants, as suggested in \cite{BDO}) and
consider 
matrices
$A_{\text{out}} \in \bb R^{m_h \times N},
A_m \in \bb R^{N \times r},
W_m \in \bb R^{r \times N},
A_{\text{in}} \in \bb{R}^{N \times {m_\alpha}},$
and vectors
$b_m \in \bb R^r,
b_{\text{out}} \in \bb{R}^{{m_h}},
b_{\text{in}} \in \bb{R}^{N}. 
$
For $m=1, \dots, l$, 
let
\begin{align*}
  & \Phi_m: \mathbb R^N \to \mathbb R^N, 
  && \Phi_m (z) =
     z %
     + A_m %
     \varrho\big( W_m%
     z %
     +
     b_m%
     \big),
  \\
  & L_{\text{out}}: \mathbb R^N \to \mathbb R^{m_h}, 
  && L_{\text{out}}(z) =
     A_{\text{out}}%
     z %
     +
     b_{\text{out}},
  \\
  & L_{\text{in}} : \mathbb R^{m_\alpha}\to \mathbb R^N,
  && L_{\text{in}}(p) =
     A_{\text{in}}
     p
     +
     b_{\text{in}},
\end{align*}
for any $z \in \bb R^N $ and $p \in \bb R^{{m_\alpha}}$. Then we approximate
  $\mathbb{F}$ by expressions of the form
  \begin{equation}
    \label{eq:NN-F}
  \mathbb{F}_\theta : \mathbb{R}^{m_\alpha} \to  \mathbb{R}^{m_h}, \qquad
  \mathbb{F}_\theta = L_{\text{out}} \circ \Phi_l \circ \dots \circ \Phi_1 \circ L_{\text{in}}
\end{equation}
where the subscript $\theta$ represents all the ``trainable'' degrees
of freedom (weights and biases) above, i.e.,
$  \theta = \{A_{\text{in}}, b_{\text{in}}, A_m, W_m, b_m,  b_{\text{out}}, A_{\text{out}}: \; m = 1, \dots, l\}.$
Let  $\Theta$ denote the set of all such values that  $\theta$ can take.
An appropriate  value of  $\theta$ in $\Theta$
is
found by a training process, as commonly done in the context of
all neural networks.
Let $\Fht : \cl Q \to \Uah$  denote the mapping of functions 
obtained by replacing $\bb F$ by $\bb F_\theta$ in~\eqref{eq:fun-map}.

\definecolor{depthA}{RGB}{244,109,67}
\definecolor{depthB}{RGB}{215,48,39}
\definecolor{depthC}{RGB}{153,15,21}
\definecolor{depthD}{RGB}{75,0,10}
\definecolor{depthAb}{RGB}{116,169,207}
\definecolor{depthBb}{RGB}{43,140,190}
\definecolor{depthCb}{RGB}{4,90,141}
\definecolor{depthDb}{RGB}{2,40,80}
\definecolor{epA}{RGB}{166,196,222}
\definecolor{epB}{RGB}{107,153,208}
\definecolor{epC}{RGB}{49,110,180}
\definecolor{epD}{RGB}{12,70,140}
\definecolor{epE}{RGB}{5,30,80}
\definecolor{degA}{RGB}{31,119,180}
\definecolor{degB}{RGB}{214,96,20}
\definecolor{degC}{RGB}{35,110,45}

\pgfplotsset{
  s6ablation/.style={
    width=8.7cm, height=5.9cm,
    xmajorgrids=true, ymajorgrids=true,
    grid style={gray!22, very thin},
    tick label style={font=\scriptsize},
    label style={font=\footnotesize},
    title style={font=\footnotesize\bfseries, yshift=-2pt},
    legend cell align=left,
    legend style={font=\scriptsize, draw=none, fill=white,
                  fill opacity=0.93, text opacity=1, inner sep=1.5pt,
                  row sep=-1pt},
  },
  s6best/.style={thick, mark size=1.5pt},
}

\section{Numerical studies}
\label{sec:numerics}

We compare MDPG and MFOSLS as losses for learning the parameter-to-solution
map. The experiments address four questions: how the loss scales with the
error, how the network depth affects the attainable accuracy, how the size
of the training set and the polynomial degree influence the accuracy
relative to that available in the trial space, and whether the approach
remains effective as the dimension of the parameter domain increases.
At the outset, we emphasize that our experiments are not meant to
validate the theory (which is already established by the proofs above), nor to
demonstrate large-scale capability (which requires dedicated
high-performance implementations outside the scope of this paper).
Rather, they illustrate how the method
responds to the key hyperparameters of the hypothesis classes
and %
how the error--loss relationship asserted by variational correctness
depends on the parameter range. 
All reference solutions are used only
for evaluation; training requires no solution data.

\subsection{Experimental setup}
\label{ssec:6.1}

We take $\om=(0,1)^2$, $f=1$, and the nonlinear profile
$\phi(t)=2+(1+t)^{-1}$ of Example~\ref{ex:invertible}, with
$\beta_1=2$ and $\beta_2=3$. For the loss--error and training comparisons,
we partition $\om$ into a $2\times2$ array of equal squares (see
Figure~\ref{fig:mixed-box-solutions}, top left). The coefficient
$\alpha$ is constant on each square, and its four subdomain values form
an $m_\alpha=4$ parameter vector sampled uniformly from
\begin{equation}
  \label{eq:alpha-boxes}
  \cl Q_n=J_n^4, \qquad J_n=[3/n,6/n], \qquad n=1,2,3.
\end{equation}
Only the final experiment, in \S\ref{ssec:mixed-box-experiments}, uses
a finer partition with $m_\alpha=16$ subdomain values and a different
parameter box specified there.

For each parameter box, we use the weights from the theory
of~\eqref{eq:kappa-alpha}, taking $\mu$ to be the smallest integer strictly
larger than $(\beta_2\alpha_2/(\beta_1\alpha_1))^2$. We choose $\kappa$
just above the theoretical threshold $\kappa_0:=c_V+c_Bc_U$, using
$\kappa=(1+2^{-20})\kappa_0$, with the constants for the respective
formulation. Thus, on $\cl Q_n$, $\mu=10$, $\kappa_1=n^2$, and
$\kappa_2=10n/3$. These boxes change the coefficient scale and the norm
weights while keeping the contrast $\alpha_2/\alpha_1=2$ fixed.

The networks~\eqref{eq:NN-F} predict coefficients in the finite element
spaces~\eqref{eq:mdpg-discrete} and~\eqref{eq:mfosls-discrete}.
We fix width $N=128$ and rank $r=32$, and use depth $l=5$ unless stated
otherwise. In every experiment, the training set $\widehat\cQ_{\rm train}$
of~\eqref{eq:erm} consists of independent uniform samples from the parameter
box used in that experiment; we denote its size by
$n_{\rm train}:=\#\widehat\cQ_{\rm train}$.
Training uses Adam with learning rate $10^{-4}$, batch size
$32$, and seed $42$; the test parameters are drawn with a
separate seed ($1730$), independently of the training set. Separately
sampled sets of $500$ validation
parameters and $1024$ test parameters are used for monitoring and
evaluation, respectively. Both formulations use the same test parameters.
The degree and training-sample comparisons below report the final
checkpoint after $20\,000$ epochs.

Let $\widehat\cQ$ denote this set of $1024$ test parameters. With
$\cl L$ denoting the selected MDPG or MFOSLS loss, write
\begin{align}
  \cl L_h(\clz;\widehat\cQ)
  &:=\frac{1}{\#\widehat\cQ}\sum_{\alpha\in\widehat\cQ}
       \cl L(\alpha,\clz(\alpha)),
       \label{emprisk}\\
  \cl E_h(\clw-\clz;\widehat\cQ)
  &:=\frac{1}{\#\widehat\cQ}\sum_{\alpha\in\widehat\cQ}
       \|\clw(\alpha)-\clz(\alpha)\|_{\Ua}^2.
       \label{empE}
\end{align}
The square root $\cl L_h^{1/2}$ is on the scale of the trial-norm
error.  The label FEM denotes Newton solutions of the discrete mixed
system~\eqref{eq:mixedC&H} in the same trial space as the network
output.  They provide (not the exact loss minimum but) a
spatial-discretization benchmark.

\subsection{Loss--error calibration}
\label{ssec:posteriori-ratio}

\begin{table}%
  \centering
  \small
  \setlength{\tabcolsep}{6pt}
  \caption{Test loss and loss--error ratios in the respective weighted
  trial norms, with $h=0.1$, $k=1$, $l=5$, $n_{\rm train}=4096$, after
  $2\times10^4$ epochs, on the same $1024$ test parameters as
  Tables~\ref{tab:ntrain-errors} and~\ref{tab:nn-errors}.}
  \label{tab:posteriori-ratio}
  \begin{tabular}{c|ccc|ccc}
    \hline
    & \multicolumn{3}{c|}{MDPG} & \multicolumn{3}{c}{MFOSLS}\\
    Box & $\cl L_h^{1/2}$ & $\bar r_{\rm est}$ & $K_{\rm est}$
        & $\cl L_h^{1/2}$ & $\bar r_{\rm est}$ & $K_{\rm est}$\\
    \hline
    $\cl Q_1$ & 0.442 & 50.9 & 24.70 & 0.195 & 135 & 8.49\\
    $\cl Q_2$ & 0.0179 & 1.24 & 8.09 & 0.0230 & 8.45 & 7.59\\
    $\cl Q_3$ & 0.0143 & 0.688 & 7.14 & 0.00752 & 0.269 & 21.78\\
    \hline
  \end{tabular}
\end{table}

At $h=0.1$, $k=1$, and $n_{\rm train}=4096$, we compare the loss with the
weighted trial-norm error using a reference of the same degree on a refined mesh,
$\clu_{h^\star}$, $h^\star=0.05$:
\begin{equation}
  \label{eq:rbar-est}
  \bar r_{\rm est}
  :=\frac{\cl L_h(\clz;\widehat\cQ)}
           {\cl E_h(\clu_{h^\star}-\clz;\widehat\cQ)}.
\end{equation}
Ideally, one would form this ratio with the exact states $\clu(\alpha)$ in
place of $\clu_{h^\star}(\alpha)$, but these are not available. To assess the
effect of this replacement, we let $\widehat\cQ_0\subset\widehat\cQ$
consist of a small number of test parameter samples, here chosen arbitrarily
as the first eight,
and compute
\begin{equation}
  \label{eq:K-est}
  K_{\rm est}:=\min_{\alpha\in\widehat\cQ_0}
  \frac{\|\clu_{h^\star}(\alpha)-\clz(\alpha)\|_{\Ua}}
       {\|\clu_{h^\star}(\alpha)-\clu_{h^\star/2}(\alpha)\|_{\Ua}}.
\end{equation}
The rationale is as follows. If we are at saturation, i.e., if there
is a $\rho<1$ with
$\|\clu-\clu_{h^\star/2}\|_{\Ua}\le\rho\,\|\clu-\clu_{h^\star}\|_{\Ua}$,
then the triangle inequality gives
$\|\clu-\clu_{h^\star}\|_{\Ua}\le C_{\rm sat}\,
\|\clu_{h^\star}-\clu_{h^\star/2}\|_{\Ua}$ with
$C_{\rm sat}:=(1-\rho)^{-1}$, and hence, for every
$\alpha\in\widehat\cQ_0$,
\begin{equation}
  \label{eq:K-est-perturbation}
  \bigg|\frac{\|\clu(\alpha)-\clz(\alpha)\|_{\Ua}}
             {\|\clu_{h^\star}(\alpha)-\clz(\alpha)\|_{\Ua}}-1\bigg|
  \le\delta:=\frac{C_{\rm sat}}{K_{\rm est}}.
\end{equation}
If~\eqref{eq:K-est-perturbation} held at every $\alpha\in\widehat\cQ$
and $\delta<1$, summing the squares would show that the ideal ratio
$\cl L_h(\clz;\widehat\cQ)/\cl E_h(\clu-\clz;\widehat\cQ)$ lies
between $(1+\delta)^{-2}\,\bar r_{\rm est}$ and
$(1-\delta)^{-2}\,\bar r_{\rm est}$.  Thus the larger the value of
$K_{\rm est}$, the smaller the effect of replacing $\clu$ by
$\clu_{h^\star}$. The constant $C_{\rm sat}$ is unknown, and the bound
is checked only on $\widehat\cQ_0$, so this is merely a plausibility
check. (With $K_{\rm est}\ge7.14$ throughout
Table~\ref{tab:posteriori-ratio}, the replacement could account for
the variation of $\bar r_{\rm est}$ across the boxes only if
$C_{\rm sat}$ were comparable to $K_{\rm est}$.)
The observed ratios $\bar r_{\rm est}$ in Table~\ref{tab:posteriori-ratio} vary
across the boxes by a factor of about $70$ for MDPG and $500$ for MFOSLS, although the contrast $\alpha_2/\alpha_1=2$ is the
same on all three. What changes is the coefficient scale, and with it the
weights $\kappa_1,\kappa_2$ and hence the norms in which loss and error are
measured. This shows that variational correctness does not
make the raw loss a universally calibrated error measure: its scale
depends on the parameter box and the norm weights.

\subsection{Network depth} %
\label{ssec:netcap}

We study how the test loss values change depending
on how many nonlinear layers are composed in the network, holding width
$N=128$ and rank $r=32$ fixed.
Figure~\ref{fig:section6-capacity} compares the depths $l\in\{1,3,5,7\}$ at
$h=0.1$, $k=1$, and $n_{\rm train}=4096$ through the cumulative minimum of
the test loss over checkpoints saved every $1000$ epochs,
\begin{equation}
  \label{eq:cmin}
  \mathrm{cmin}(e):=\min_{e'\le e}
  \cl L_h\big(\clz^{(e')};\widehat\cQ\big)^{1/2},
\end{equation}
where $\clz^{(e)}$ is the network after $e$ epochs. For MDPG with $l=1$,
$\mathrm{cmin}$ does not improve after epoch $3000$, and in both formulations
$l=1$ and $l=3$ end well above $l=5$ and $l=7$. Shallower networks are
therefore insufficient for this example at this training budget. For
$l\ge3$, $\mathrm{cmin}$ is still decreasing at $10^4$ epochs, so we cannot conclude that
training has converged there.
Continued to $2\times10^4$ epochs (dashed), the run with $l=5$ gains only
about another factor two, compared with nearly two orders of magnitude
between the first checkpoint and $10^4$ epochs. We therefore use $l=5$ and $2\times10^4$ epochs as a
practical compromise between accuracy and cost.

\begin{figure}[htbp]
  \centering
  \begin{tikzpicture}
    \begin{groupplot}[s6ablation, width=8.42cm,
        group style={group size=2 by 1, horizontal sep=0.85cm,
                     ylabels at=edge left, yticklabels at=edge left},
        xlabel={Epochs}, ylabel={$\mathrm{cmin}$},
        xmin=0, xmax=21000, scaled x ticks=false,
        xtick={0,5000,10000,15000,20000},
        xticklabels={$0$,$5000$,$10^4$,$1.5\!\times\!10^4$,$2\!\times\!10^4$},
        ymode=log, ymin=5e-3, ymax=6e1,
        legend style={at={(0.97,0.97)}, anchor=north east},
        legend columns=2]
      \nextgroupplot[title={MDPG}]
      \addplot[s6best, depthA, mark=*]         table[x=epoch, y=l1best] {data/section6_depth_mdpg.dat};
      \addplot[s6best, depthB, mark=square*]   table[x=epoch, y=l3best] {data/section6_depth_mdpg.dat};
      \addplot[s6best, depthC, mark=triangle*] table[x=epoch, y=l5best] {data/section6_depth_mdpg.dat};
      \addplot[s6best, depthD, mark=diamond*]  table[x=epoch, y=l7best] {data/section6_depth_mdpg.dat};
      \addplot[s6best, depthC, mark=triangle*, densely dashed, forget plot,
               restrict x to domain=10000:20000]
        table[x=epoch, y=n4096best] {data/section6_trainset_mdpg.dat};
      \addplot[black!55, dashed, no marks, forget plot]
        table[x=epoch, y=fem] {data/section6_trainset_mdpg.dat};
      \legend{$l=1$,$l=3$,$l=5$,$l=7$}
      \nextgroupplot[title={MFOSLS}]
      \addplot[s6best, depthAb, mark=*]         table[x=epoch, y=l1best] {data/section6_depth_mfosls.dat};
      \addplot[s6best, depthBb, mark=square*]   table[x=epoch, y=l3best] {data/section6_depth_mfosls.dat};
      \addplot[s6best, depthCb, mark=triangle*] table[x=epoch, y=l5best] {data/section6_depth_mfosls.dat};
      \addplot[s6best, depthDb, mark=diamond*]  table[x=epoch, y=l7best] {data/section6_depth_mfosls.dat};
      \addplot[s6best, depthCb, mark=triangle*, densely dashed, forget plot,
               restrict x to domain=10000:20000]
        table[x=epoch, y=n4096best] {data/section6_trainset_mfosls.dat};
      \addplot[black!55, dashed, no marks, forget plot]
        table[x=epoch, y=fem] {data/section6_trainset_mfosls.dat};
      \legend{$l=1$,$l=3$,$l=5$,$l=7$}
    \end{groupplot}
  \end{tikzpicture}
  \caption{The cumulative minimum~\eqref{eq:cmin} against the number of training
  epochs for depths $l\in\{1,3,5,7\}$, on $\cl Q_1$ with $h=0.1$, $k=1$,
  $n_{\rm train}=4096$, $N=128$, and $r=32$. The run with $l=5$ is continued to $2\times10^4$
  epochs (dashed part). The gray dashed line is the FEM value of $\cl L_h^{1/2}$
  in the same trial space.}
  \label{fig:section6-capacity}
\end{figure}

\subsection{Training and spatial accuracy}
\label{ssec:trainset-experiments}
\label{ssec:box-experiments}

We now fix $\cl Q_1$. Componentwise accuracy is measured against a common
high-fidelity reference: a conforming MFOSLS Newton solution with degree
$2$ and mesh size $0.0125$. For either network or FEM states, we report
\begin{equation}
  \label{eq:nn-reference-error}
  E_\zeta^{\,s}:=
  \left(\frac{\sum_{\alpha\in\widehat\cQ}
        \|\zeta_h^{\,s}(\alpha)-\zeta_{\rm ref}(\alpha)\|_{L_2}^2}
       {\sum_{\alpha\in\widehat\cQ}
        \|\zeta_{\rm ref}(\alpha)\|_{L_2}^2}\right)^{1/2},
  \qquad s\in\{{\rm NN},{\rm FEM}\}.
\end{equation}
For MDPG, $E_{\hat u}$ and $E_{\hat q_n}$ measure the conforming minimal
extensions of the traces in~\eqref{eq:mdpg-discrete}; for MFOSLS, $E_u$
and $E_q$ measure the conforming scalar and flux directly.

\begin{table}[htbp]
  \centering
  \footnotesize
  \setlength{\tabcolsep}{5pt}
  \caption{Effect of training-set size on $\cl Q_1$, with $h=0.1$,
  $k=1$, $l=5$, and $20\,000$ epochs. The upper block reports the loss and
  the errors~\eqref{eq:nn-reference-error} of the network; the row FEM
  reports those of the finite element solutions in the same trial space,
  which do not depend on training.}
  \label{tab:ntrain-errors}
  \begin{tabular}{r|ccc|ccc}
    \hline
    & \multicolumn{3}{c|}{MDPG} & \multicolumn{3}{c}{MFOSLS}\\
    $n_{\rm train}$ & $\cl L_h^{1/2}$ & $E_{\hat u}^{\rm NN}$ & $E_{\hat q_n}^{\rm NN}$
       & $\cl L_h^{1/2}$ & $E_u^{\rm NN}$ & $E_q^{\rm NN}$\\
    \hline
    128  & 58.9  & 0.134  & 8.00  & 33.3  & 0.104  & 5.43\\
    256  & 14.8  & 0.0607 & 4.16  & 15.0  & 0.0560 & 2.64\\
    512  & 5.27  & 0.0714 & 1.58  & 6.28  & 0.0409 & 0.991\\
    1024 & 1.62  & 0.0415 & 0.328 & 2.75  & 0.0361 & 0.269\\
    2048 & 0.555 & 0.0218 & 0.110 & 0.396 & 0.0184 & 0.0836\\
    4096 & 0.442 & 0.0482 & 0.212 & 0.195 & 0.0166 & 0.0614\\
    \hline
    & $\cl L_h^{1/2}$ & $E_{\hat u}^{\rm FEM}$ & $E_{\hat q_n}^{\rm FEM}$
       & $\cl L_h^{1/2}$ & $E_u^{\rm FEM}$ & $E_q^{\rm FEM}$\\
    \hline
    FEM  & 0.0241 & 0.00275 & 0.0106 & 0.0103 & 0.00072 & 0.0106\\
    \hline
  \end{tabular}
\end{table}

Table~\ref{tab:ntrain-errors} shows that,
for fixed network architecture,
increasing $n_{\rm train}$ from
$128$ to $4096$ substantially reduces the test loss and flux error in
both formulations. The component errors need not decrease monotonically:
the MDPG network errors increase from $2048$ to $4096$ samples despite the lower
loss.
This is most likely not an effect of the sample size but of two other
factors. First, the table reports the final
checkpoint, and late in training the loss fluctuates by up to a factor of
three between checkpoints: for $n_{\rm train}=4096$, the final MDPG value
$0.442$ lies well above the best value $0.143$ of the same run (dashed
curve in Figure~\ref{fig:section6-capacity}).
Second, the mean loss $\cl L_h(\clz;\widehat\cQ)$ is equivalent to the mean
squared error $\cl E_h(\clu-\clz;\widehat\cQ)$ in the weighted trial norm,
but it does not control each component error separately.
Even the largest training set leaves a substantial gap to the FEM
benchmark. Thus, the learning error
dominates the discretization error.  Since the loss decreases by more than
two orders of magnitude from $128$ to $4096$ samples, with gains at every
step, this gap can be attributed to the present weight and training budgets.
Further increasing the prediction accuracy would require larger networks,
more training data, and longer training: in line with the limited scope stated at
the beginning of this section, we do not pursue this here.
Because the batch size is fixed, a larger training set also means more
optimization steps per epoch, so the improvement reflects both more data
and more optimization.

\begin{table}[htbp]
  \centering
  \footnotesize
  \setlength{\tabcolsep}{5pt}
  \caption{Effect of degree on $\cl Q_1$ at $h=0.2$, with $l=5$,
  $n_{\rm train}=4096$, and $20\,000$ epochs. Each NN/FEM pair uses
  the same trial space and the same high-fidelity reference.}
  \label{tab:nn-errors}
  \begin{tabular}{cc|ccc|ccc}
    \hline
    & & \multicolumn{3}{c|}{MDPG} & \multicolumn{3}{c}{MFOSLS}\\
    $k$ & & $\cl L_h^{1/2}$ & $E_{\hat u}$ & $E_{\hat q_n}$
           & $\cl L_h^{1/2}$ & $E_u$ & $E_q$\\
    \hline
    0 & NN  & 1.65   & 0.576   & 0.228  & 0.254   & 0.101   & 0.219\\
      & FEM & 1.65   & 0.737   & 0.234  & 0.245   & 0.0981  & 0.214\\
    1 & NN  & 0.103  & 0.0276  & 0.0417 & 0.0554  & 0.0153  & 0.0398\\
      & FEM & 0.0905 & 0.0315  & 0.0351 & 0.0347  & 0.00485 & 0.0350\\
    2 & NN  & 0.0428 & 0.0158  & 0.0332 & 0.0365  & 0.0161  & 0.0252\\
      & FEM & 0.00761 & 0.00438 & 0.0237 & 0.00495 & 0.00040 & 0.00517\\
    \hline
  \end{tabular}
\end{table}

Table~\ref{tab:nn-errors} restricts the spatial comparison to $h=0.2$
and varies $k$. Raising the degree from $0$ to $1$ sharply improves
the network predictions in both formulations. The error of a network has
two contributions: the discretization error, measured by the FEM rows, and
the error of learning the dependence on the parameter. From $k=1$ to $k=2$,
the FEM losses fall much more rapidly than those of the networks, so at
$k=2$ the learning error dominates, as already observed in
Table~\ref{tab:ntrain-errors}.
Enriching the trial space reduces only
the discretization error. Hence, to benefit from a richer trial space, the
learning error must be reduced as well, through more network capacity,
training data, and optimization effort.

The lowest-order MDPG scalar error ($E_{\hat u}$) in
Table~\ref{tab:nn-errors} is particularly striking: it is $0.576$ for
the network and $0.737$ for FEM, compared with $0.101$ and $0.0981$
for MFOSLS ($E_u$). The flux errors, in contrast, are similar. Because
the poor scalar accuracy is already present in the MDPG finite element
solution, it cannot be attributed solely to neural-network training.
The discrepancy is greatly reduced at $k=1$. These observations
motivate using degree $k\ge 1$ for the present example and
suggest that further studies are needed to examine whether the
lowest-order case suffers from locking phenomena.

\subsection{A sixteen-parameter example}
\label{ssec:mixed-box-experiments}

The experiments so far use four parameters and contrast $2$. This last
experiment tests whether the same architecture and optimizer settings carry
over, without retuning, to a higher-dimensional parameter space with a larger
contrast and a more intricate coefficient pattern. We divide $\om$ into a
$4\times4$ array of subdomains and sample uniformly from
\begin{equation}
  \label{eq:num-mixed-boxes}
  \cl Q_{\rm mix}=\prod_{i=1}^{16}J_{n(i)}.
\end{equation}
The fixed assignment places four subdomains in $J_1$, seven in $J_2$,
and five in $J_3$. Hence $\alpha_1=1$, $\alpha_2=6$, and the weights
from theory are $\mu=82$, $\kappa_1=9$, and $\kappa_2=82/3$.
Both networks use $h=0.1$, $k=1$, $n_{\rm train}=4096$, and $15\,000$ training epochs.

\begin{figure}%
  \centering
  \begingroup
  \setlength{\tabcolsep}{3pt}
  \begin{tabular}{@{}cc@{\hspace{24pt}}cc@{}}
    \includegraphics[width=.22\textwidth]{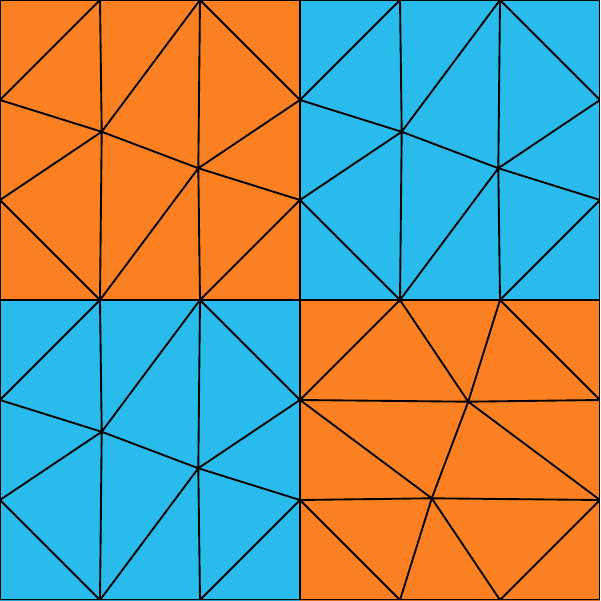}
    & \includegraphics[height=.22\textwidth]{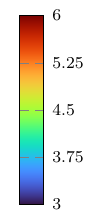}
    & \includegraphics[width=.22\textwidth]{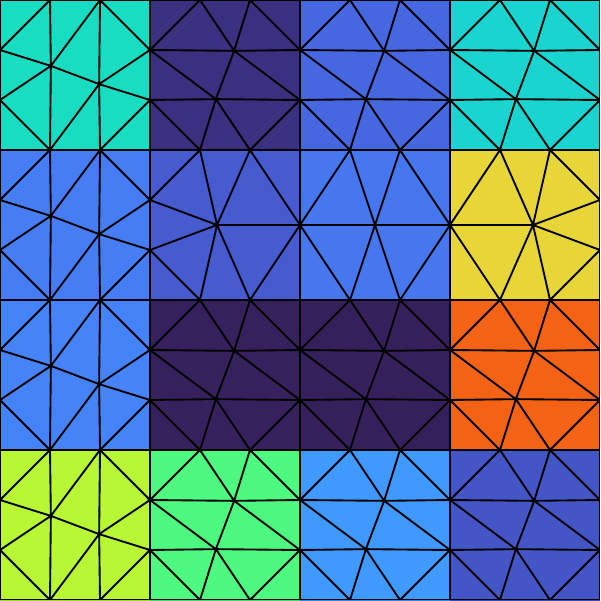}
    & \includegraphics[height=.22\textwidth]{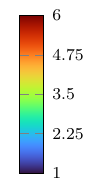}\\
    \multicolumn{2}{c}{\footnotesize $2\times2$ partition, $\cl Q_1$}
    & \multicolumn{2}{c}{\footnotesize $4\times4$ partition, $\cl Q_{\rm mix}$}
  \end{tabular}
  \par\smallskip
  \edef\panw{\the\dimexpr.215\textwidth\relax}
  \newcommand{\solpanel}[1]{%
    \includegraphics[width=\panw]{figures/section6_mixed_box_ref/panels/#1.pdf}}
  \newcommand{\solcbar}[1]{%
    \includegraphics[height=\panw]{figures/section6_mixed_box_ref/colorbars/#1.pdf}}
  \begin{tabular}{rccc@{\hspace{3pt}}c}
    & \footnotesize reference & \footnotesize MDPG & \footnotesize MFOSLS &\\
    $u$ & \solpanel{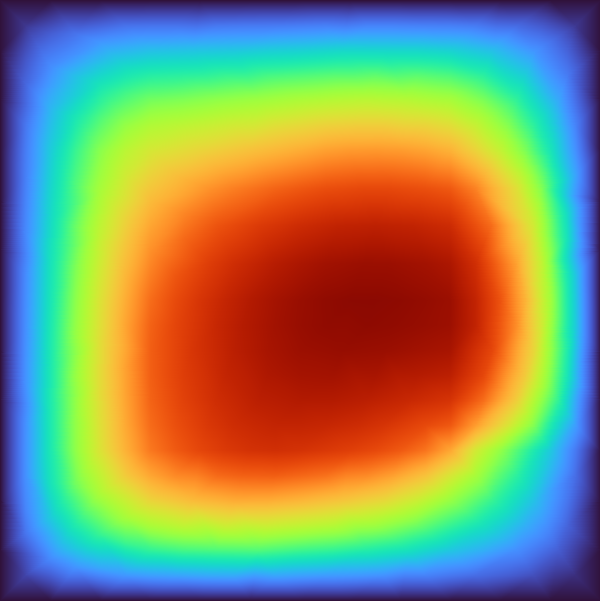} & \solpanel{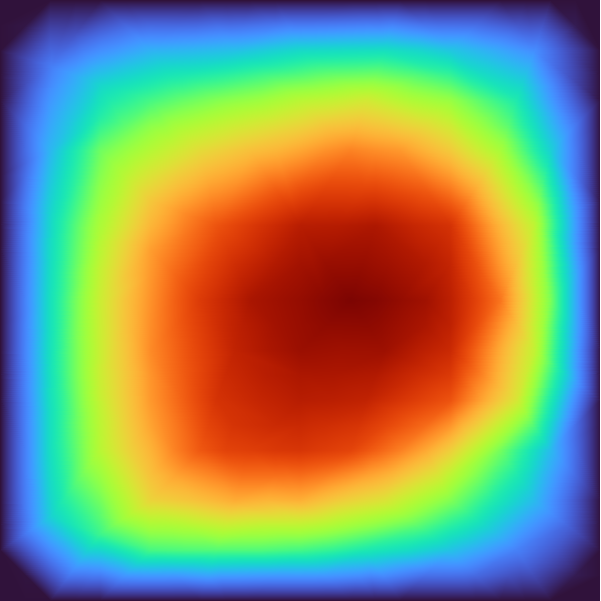}
        & \solpanel{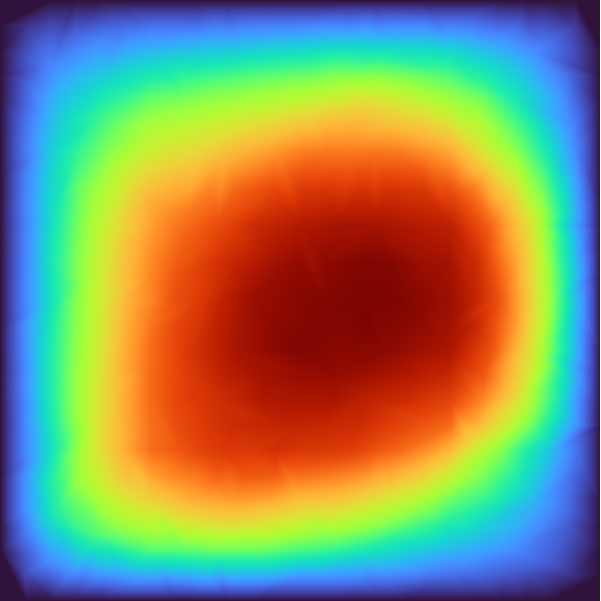} & \solcbar{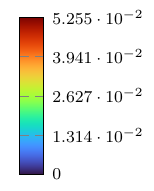}\\
    $|q|$ & \solpanel{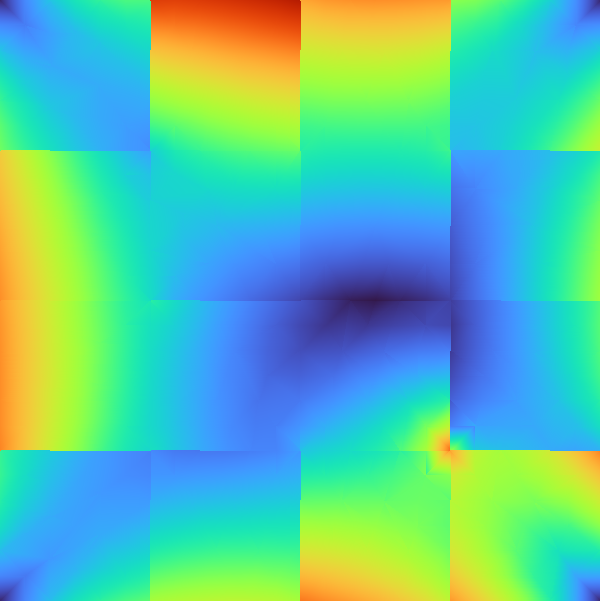} & \solpanel{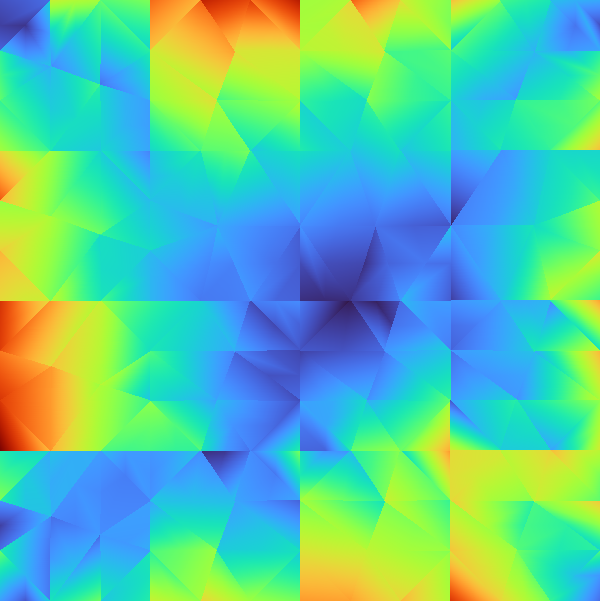}
          & \solpanel{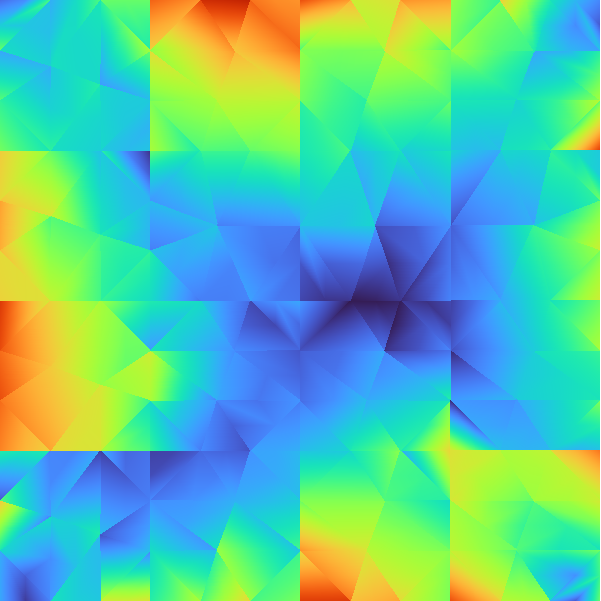} & \solcbar{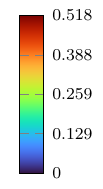}\\
  \end{tabular}
  \endgroup
  \caption{Top: sample coefficients on the $2\times2$ partition used in
  \S\S\ref{ssec:6.1}--\ref{ssec:trainset-experiments}, drawn from $\cl Q_1$
  on the mesh with $h=0.2$ (left), and on the $4\times4$ partition, drawn
  from $\cl Q_{\rm mix}$ on the mesh with $h=0.1$ (right). Each color scale
  covers the range of the respective box. Bottom: reference and network
  solutions for the $\cl Q_{\rm mix}$ sample. The reference
  is a conforming Newton solution with $k=1$ and $h=0.05$; the networks
  use $k=1$ and $h=0.1$. MDPG displays conforming trace extensions.
  Each solution row uses a common color scale.}
  \label{fig:mixed-box-solutions}
\end{figure}

Although a single sample does not quantify accuracy, for illustration, 
Figure~\ref{fig:mixed-box-solutions} shows that both MDPG and MFOSLS neural network predictions reproduce
the main scalar and flux features for the displayed sample. In particular,
the reference flux magnitude changes abruptly across the subdomain
interfaces in a pattern set by all sixteen coefficient values, and both
networks reproduce the main features of this pattern. Thus, with four times
as many parameters and three times the contrast, the architecture and
optimizer settings of the preceding experiments still yield predictions
that resolve the coefficient structure.

\section{Conclusions}

We have introduced and analyzed two loss functions for a
class of nonlinear diffusion models that are globally variationally correct, i.e.,
trial norm errors are proportional to corresponding residuals measured in the dual test-norm.
In one formulation (MFOSLS) the test spaces are of $L_2$-type so that $L_2$-residuals 
are computable. In the other formulation (MDPG) the test space is not self-dual and we resort
to DPG concepts for generating computable loss expressions. In both cases variational 
correctness hinges on {\em monotonicity principles} which account for global stability.
However, the price for this latter asset is to work with certain parameter-dependent trial norms.
In the learning context this renders the error--loss relationship parameter-dependent. In order
to shed further light on the quantitative interplay between the error--loss relation on the one hand
and, on the other hand, the role of various hyperparameters of our hybrid FE-NN
hypothesis classes, we have designed
a few simple numerical experiments. They confirm the parameter dependence of the
error--loss relation, show that sufficiently deep networks are needed, and show that the
error of the trained networks is dominated by the learning error rather than by the
finite element discretization. Overall, MFOSLS attains a slightly better accuracy in most
experiments. Moreover, at lowest order, MDPG exhibits a large scalar error that is already
present in its finite element solution and does not appear with MFOSLS; whether this is a
locking phenomenon remains to be examined. This contrasts the situation of
linear diffusion models \cite{CDG} where the DPG approach turned out to be significantly 
more robust with regard to parameter variation, largely due to the fact that the DPG formulation
allows one to push the test-norms close to so-called {\em optimal test norms}.
Such renormalization ideas for the nonlinear context will be taken up in forthcoming work. 

\section*{Statements and Declarations}

\noindent\textbf{Funding.} This work was supported in part by NSF Grants DMS-2245077, DMS-2513112, DMS-2245097, DMS-2038080,   and by the German Research Foundation grant CRC 1481.

\smallskip\noindent\textbf{Competing interests.} The authors declare no competing interests.

\smallskip\noindent\textbf{Data availability.} The data reported in Section~\ref{sec:numerics} were generated by the authors and are available from the corresponding author on reasonable request.

\end{document}